\documentclass[12pt, reqno]{amsart}
\usepackage{amsmath, amsthm, amscd, amsfonts, amssymb, graphicx, color, relsize}
\usepackage{tikz-cd}
\usepackage[bookmarksnumbered, colorlinks, plainpages]{hyperref}
\textheight 22.5truecm \textwidth 14.5truecm
\setlength{\oddsidemargin}{0.35in}\setlength{\evensidemargin}{0.35in}
\setlength{\topmargin}{-.5cm}

\newtheorem{theorem}{Theorem}[section]

\newtheorem{proposition}[theorem]{Proposition}

\newtheorem{corollary}[theorem]{Corollary}
\newtheorem{definition}[theorem]{Definition}

\newtheorem{remark}[theorem]{Remark}

\usepackage[bookmarksnumbered, colorlinks, plainpages]{hyperref}
\hypersetup{colorlinks=true,linkcolor=blue, anchorcolor=green, citecolor=cyan, urlcolor=red, filecolor=magenta, pdftoolbar=true}
\newcommand{\F}{\mathfrak{F}}
\newcommand{\J}{\mathfrak{J}}

\newcommand{\I}{\mathfrak{I}}

\usepackage{comment}
\excludecomment{mysection}

\begin{document}
	
\title[$K_0$-group of absolute matrix order unit spaces]{$K_0$-group of absolute Matrix order unit spaces}
\author{Anil Kumar Karn and Amit kumar}
	
\address{School of Mathematical Sciences, National Institute of Science Education and Research, HBNI, Bhubaneswar, P.O. - Jatni, District - Khurda, Odisha - 752050, India.}

\email{\textcolor[rgb]{0.00,0.00,0.84}{anilkarn@niser.ac.in, amit.kumar@niser.ac.in}}

\subjclass[2010]{Primary 46B40; Secondary 46L05, 46L30.}
	
\keywords{Absolutely ordered space, absolute oder unit space, absolute matrix order unit space, order projection, partial isometry, $K_0$-group, orthogonal unital $\vert\cdot\vert$-preserving map.}

\begin{abstract}
In this paper, we describe the Grothendieck group $K_0(V)$ of an absolute matrix order unit space $V$. For this purpose, we discuss the direct limit of absolute matrix order unit spaces. We show that $K_0$ is a functor from category of absolute matrix order unit spaces with morphisms as unital completely $\vert \cdot \vert$-preserving maps to category of abelian groups. We study order structure on $K_0(V)$ and prove that under certain condition $K_0(V)$ is an ordered abelian group. We also show that the functor $K_0$ is additive on orthogonal unital completely $\vert \cdot \vert$-preserving maps.
\end{abstract}

\thanks{The second author was financially supported by the Senior Research Fellowship of the University Grants Commission of India.}

\maketitle

\section{Introduction}

Operator $K$-theory is a primary component of the area of non-commutative topology. It also plays a prominent role in the structure theory of C$^*$-algebras. Due to its applications and appeal, operator $K$-theory has attracted a wide range of mathematicians. For a reference we may see, for example, \cite{B98, B06, RLL00} and references therein. 

In the recent years, the first author, in collaboration with several others, has been working on the order theoretic aspects of C$^*$-algebras. In \cite{K18}, he proposed the notion of absolute order unit spaces. Unital $JB$-algebras in general, and the self-adjoint parts of unital C$^*$-algebras in particulars, are classes of absolute order unit spaces. He further introduced a set of equivalent conditions which is necessary as well as sufficient for an absolute order unit space to be an $AM$-space (see \cite[Theorem 4.12]{K16}). In this context, an absolute order unit space may be considered as a model for non-commutative $AM$-spaces. 

In \cite{K19}, the authors introduced the matricial version of absolute order unit spaces, namely, the notion of absolute matrix order unit spaces and studied the absolute value preserving maps on these spaces. In \cite{PI19}, they studied various comparisons of order projections in an absolute matrix order unit space. The notion of order projections was introduced in \cite{K18} and order theoretically generalizes projections in a C$^*$-algebra. The main goal of this paper is initiate a study of $K$-theory in absolute matrix order unit spaces. 

Let us recall that the Grothendieck group $K_0(A)$ of a unital C$^*$-algebra $A$ is defined via the class of projections $\mathcal{P}_{\infty}(A)$ corresponding to the inductive (direct) limit $M_{\infty}(A)$ of the family of C$^*$-algebras $\lbrace M_n(A) \rbrace$. We follow the similar path. In order to discuss a suitable Grothendieck group corresponding to an absolute matrix order unit space, we describe a suiatable inductive (direct) limit. We spend the first part of this paper to describe the inductive limit of an absolute matrix order unit space. The idea of the present paper is based on the papers \cite{ER88, RKY05, RKY06, RKY08}. 

A matricially normed space $(V, \lbrace \Vert\cdot\Vert_n \rbrace)$ can be realized as the $\mathcal{F}$-bimodule $M_{\infty}(V)$ via the inductive (direct) limit of the sequence 
\[ V \subset M_2(V) \subset \dots \subset M_n(V) \subset \dots. \] 
This idea was suggested by B. E. Johnson to Effros and Ruan and was discussed in \cite{ER88}. In \cite{RKY05, RKY06, RKY08}, the first author along with others discussed the inductive (direct) limit of matrix ordered spaces. In this paper we describe the inductive (direct) limit of absolute matrix order unit spaces. 

In the other part of the paper, we develop $K_0$ of an absolute matrix order unit space. For this purpose, we extend the study of comparison of projections in an absolute matrix order unit space initiated in \cite{KK21}. We introduce the Grodendieck group $K_0(V)$ of the equivalence classes of projections in an absolute matrix order unit space $V$ as its $K_0$-group. We study an order structure in $K_0(V)$. We prove that $K_0$ is a functor from the category of absolute matrix order unit spaces with morphisms as unital completely absolute value preserving maps to category of abelian groups. We also define orthogonality of completely positive maps and prove that sum of two orthogonal completely absolute value preserving maps is again a completely absolute value preserving map. We further prove that $K_0$ is additive on orthogonal unital completely absolute value preserving maps. 

The development of the paper is as follows. In section 2, we have recalled inductive (direct) limit of matrix ordered spaces. In section 3, we have described the inductive (direct) limit of absolute matrix order unit spaces. In section 4, we develop $K_0$-group of an absolute matrix order unit space. In this section, we have also studied order structure in $K_0$-groups. In section 5, we have studied functorial nature of $K_0$. Further, we have proved that $K_0$ is additive on orthogonal unital completely absolute value preserving maps. 

\section{Inductive (direct) limit of matrix ordered spaces}
In this section, we recall the direct limit of matrix ordered spaces. 

\subsection{Matricial notions.}

Let $V$ be a complex vector space. We denote by $M_{m,n}(V)$ the vector space of all the $m \times n$ matrices $v=[v_{i,j}]$ with entries $v_{i,j} \in V$ and by $M_{m,n}$ the vector space of all the $m \times n$ matrices $a = [a_{i,j}]$ with entries $a_{i,j} \in \mathbb{C}$. We write $0_{m,n}$ for zero element in $M_{m,n}(V)$. For $m=n$, we write $0_{m,n} = 0_n$. We define $av = \begin{bmatrix} \displaystyle \sum_{k=1}^m a_{i,k}v_{k,j}\end{bmatrix}$ and $vb = \begin{bmatrix} \displaystyle \sum_{k=1}^n v_{i,k} b_{k,j} \end{bmatrix}$ for $a \in M_{r,m}, v \in M_{m,n}(V)$ and $b \in M_{n,s}$. We write 

\begin{center}
$v \oplus w = \begin{bmatrix} v & 0 \\ 0 & w\end{bmatrix}$ for $v \in M_{m,n}(V),w \in M_{r,s}(V).$
\end{center}

Here $0$ denotes suitable rectangular matrix of zero entries from $V$ \cite{ZJR88}.

Consider the family $\lbrace M_n \rbrace$. For each $n,m \in \mathbb{N}$ define $\sigma_{n,n+m}:M_n \to M_{n+m}$ given by $\sigma_{n,n+m}(a) = a \oplus 0_m$. Then $\sigma_{n,n+m}$ is a \emph{vector space isomorphism} with $$\sigma_{n,n+m}(ab)=\sigma_{n,n+m}(a)\sigma_{n,n+m}(b).$$

We observe that $\lbrace M_n, \sigma_{n,n+m},\mathbb{N}\rbrace$ is a direct system. Let $\F$ denote the set of all the $\infty \times \infty$ complex matrices having atmost finitely many non-zero entries. For each $n \in \mathbb{N}$, define $\sigma_n: M_n \to \F$ given by $\sigma_n(a)= a \oplus \mathfrak{0}$ for all $a \in M_n$, where $\mathfrak{0}$ denotes the zero element in $\F$. Then $\lbrace \F, \sigma_n\rbrace$ is the \emph{inductive limit} of $\lbrace M_n, \sigma_{n,n+m},\mathbb{N}\rbrace$. In fact, we have $$\F=\displaystyle \bigcup_{n=1}^\infty \sigma_n(M_n).$$

Let $\mathfrak{e}_{ij}$ denote the $\infty \times \infty$ matrix with $1$ at the $(i,j)$th entry and $0$ elsewhere. Then the collection $\lbrace \mathfrak{e}_{ij}\rbrace$ is called the set of matrix units in $\F$. We write $\I_n$ for $\displaystyle \sum_{i=1}^n \mathfrak{e}_{ii}.$ 

For $i,j,k,l \in \mathbb{N}$, we have $\mathfrak{e}_{ij} \mathfrak{e}_{kl}=\delta_{jk} \mathfrak{e}_{il}$ where

\[ 
   \delta_{jk} = \begin{cases}
    1 & \text{if $j=k$}  \\
    0 & \text{otherwise}.
   \end{cases}
\]

Note that for any $\mathfrak{a} \in \F$, there exist complex numbers $a_{ij}$ such that 
\begin{center}
$\mathfrak{a} = \displaystyle \sum_{i,j} a_{ij}\mathfrak{e}_{ij}$ (a finite sum).   
\end{center}
Thus $\F$ is an \emph{algebra}.

For $\mathfrak{a} = \displaystyle \sum_{i,j} a_{ij} \mathfrak{e}_{ij} \in \F$, we define $\mathfrak{a}^* = \displaystyle \sum_{i,j}\bar{a}_{ji}\mathfrak{e}_{ij} \in \F$. Then $\mathfrak{a} \longmapsto \mathfrak{a}^*$ is an \emph{involution}. In other words, $\F$ is a \emph{$\ast$-algebra}.

For the details please refer to \cite{ER88}. 

\subsection{Matrix ordered spaces.}

A complex vector space with an involution is called a \emph{$\ast$-vector space}. We write $V_{sa} = \lbrace v \in V: v = v^*\rbrace$. Then $V_{sa}$ is a real vector space \cite{CE77}. 

\begin{definition}\cite{CE77}
A \emph{matrix ordered space} is a $*$-vector space $V$ together with a sequence $\lbrace M_n(V)^+ \rbrace$ with $M_n(V)^+ \subset M_n(V)_{sa}$ for each $n \in \mathbb{N}$ satisfying the following conditions: 
\begin{enumerate}
	\item[(a)] $(M_n(V)_{sa}, M_n(V)^+)$ is a real ordered vector space, for each $n \in \mathbb{N}$; and  
	\item[(b)] $a^* v a \in M_m(V)^+$ for all $v \in M_n(V)^+$, $a \in M_{n,m}$ and $n ,m \in \mathbb{N}$. 
\end{enumerate} 
It is denoted by $(V, \lbrace M_n(V)^+ \rbrace)$. If, in addition, $e \in V^+$ is an order unit in $V_{sa}$ such that $V^+$ is proper and $M_n(V)^+$ is Archimedean for all $n \in \mathbb{N}$, then $V$ is called a \emph{matrix order unit space} and is denoted by $(V, \lbrace M_n(V)^+ \rbrace, e)$.
\end{definition}

\begin{proposition}
Let $(V, \lbrace M_n(V)^+\rbrace)$ be a matrix ordered space. 
\begin{enumerate}
\item[(1)]\cite[Proposition 1.8]{KV97}
\begin{enumerate}
\item[(a)] If $V^+$ is proper, then $M_n(V)^+$ is proper for all $n \in \mathbb{N}$.
\item[(b)] If $V^+$ is generating, then $M_n(V)^+$ is generating for all $n \in \mathbb{N}$.
\end{enumerate}
\item[(2)] \cite[Lemma 2.6]{KV97} If $e \in V^+$ is an order unit for $V_{sa}$. Then $e^n$ is an order unit for $M_n(V)_{sa}$ for all $n \in \mathbb{N}$ ( where $e^n : = e \oplus  \cdots \oplus e \in M_n(V)$).
\end{enumerate}
\end{proposition}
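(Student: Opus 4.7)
The plan is to handle each of the three claims by reducing positivity questions on $M_n(V)$ to positivity in $V$ via the conjugation axiom (b) of the definition of matrix ordered space. Two flavors of compression are used throughout: a \emph{column} compression $a^{*}va$ with $v \in M_n(V)^+$ and $a \in M_{n,1}$, which produces an element of $V^+$, and a \emph{row} compression $a^{*}ua$ with $u \in V^+$ and $a \in M_{1,n}$, which produces an element of $M_n(V)^+$ whose $(k,l)$-entry is $\overline{a_k}\,a_l\,u$.

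For part (1)(a), take $v \in M_n(V)^+ \cap (-M_n(V)^+)$. Column-compressing by $a = e_i$ (the $i$-th standard column) gives $v_{ii} \in V^+ \cap (-V^+) = \{0\}$. Then column-compressing by $a = e_i + e_j$ and by $a = e_i + ie_j$ yields $v_{ij}+v_{ji}$ and $i(v_{ij}-v_{ji})$ in $V^+\cap(-V^+)=\{0\}$ respectively, so $v_{ij} = 0$ for all $i \neq j$ and hence $v = 0$.

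For part (1)(b), given $w \in M_n(V)_{sa}$, use the generating hypothesis on $V^+$ to write each diagonal entry $w_{ii}\in V_{sa}$ as a difference of elements of $V^+$, and decompose each off-diagonal $w_{ij}$ ($i<j$) into its real and imaginary parts (noting $w_{ji}=w_{ij}^{*}$), each again a difference of elements of $V^+$. Row-compress a positive $u \in V^+$ by $a = \alpha e_i + \beta e_j$: this yields a positive matrix placing $|\alpha|^2 u,\;\overline{\alpha}\beta u,\;\overline{\beta}\alpha u,\;|\beta|^2 u$ in the $\{i,j\}$-block. Suitable linear combinations of the four choices $(\alpha,\beta) \in \{(1,\pm 1),\,(1,\pm i)\}$ realize each ``matrix unit times $u$'' block with a single off-diagonal pair of entries as a difference of two such row-compressions, hence as a difference of two elements of $M_n(V)^+$. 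Summing across entries writes $w$ as such a difference.

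For part (2), given $v \in M_n(V)_{sa}$, first bound every entry in the order-unit norm of $V$: by hypothesis $-\mu e \leq v_{ii} \leq \mu e$ for the diagonal, and the off-diagonal splits as $v_{ij}=a_{ij}+ib_{ij}$ with $\pm a_{ij},\,\pm b_{ij}\leq \mu e$ for some uniform $\mu>0$. Using the row-compression calculus above, for any $u \in V_{sa}$ with $-\mu e\leq u\leq \mu e$ each elementary self-adjoint block ($u$ at a single diagonal position, at a symmetric off-diagonal pair, or at a Hermitian imaginary pair) is dominated in $M_n(V)^+$ by a fixed scalar multiple of $e^n$. Summing these dominations over the $n^2$ entries produces a $\lambda>0$ with $\lambda e^n \pm v \in M_n(V)^+$. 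The main obstacle throughout is that the matrix positive cones $M_n(V)^+$ are reached only through the conjugation axiom, so every matrix positivity assertion must ultimately be exhibited as a sum of compressions $a^{*}ua$ with $u \in V^+$; in part (2) this is delicate because a nonself-adjoint off-diagonal entry $v_{ij}$ must be controlled simultaneously via positivity data on its real and imaginary parts together with its partner $v_{ji}$, and the four-choice compression combinatorics above is precisely what allows this control to be expressed in the absence of any product structure on $V$.
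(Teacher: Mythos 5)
Your argument is correct, but note that the paper itself offers no proof of this proposition: it is quoted verbatim from \cite[Proposition 1.8]{KV97} and \cite[Lemma 2.6]{KV97}, so there is nothing in the text to compare against. What you have written is a sound, self-contained reconstruction along the standard lines: all three parts reduce to the conjugation axiom $a^{*}va \in M_m(V)^+$ together with the polarization combinatorics of the compressions $a = \alpha e_i + \beta e_j$ with $(\alpha,\beta) \in \{(1,\pm 1),(1,\pm i)\}$. In (1)(a) the column compressions correctly kill first the diagonal and then, via the two polarized choices, both $v_{ij}+v_{ji}$ and $i(v_{ij}-v_{ji})$, hence each off-diagonal entry; in (1)(b) and (2) the row compressions of $u\in V^+$ (resp.\ of $\mu e \pm u$) realize each elementary Hermitian block as a difference (resp.\ as dominated by a multiple of $e^n$), and summing over entries finishes the job. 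One small point worth making explicit in (1)(b): the generating hypothesis on $V^+$ is a statement about all of $V$, so you should record the one-line observation that a self-adjoint element $v_0 - v_1 + i(v_2-v_3)$ with $v_k \in V^+$ equals $v_0 - v_1$, which is what lets you decompose $w_{ii}$, $a_{ij}$ and $b_{ij}$ inside $V^+ - V^+$. With that remark added, the proof is complete and is presumably the same argument as in \cite{KV97}.
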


\begin{definition}\cite{ER88}\label{c}
Let $V$ be a complex vector space. Consider the family $\lbrace M_n(V)\rbrace.$ For each $n,m \in \mathbb{N}$, define $T_{n,n+m}:M_n(V) \to M_{n+m}(V)$ by $T_{n,n+m}(v)=v \oplus 0_m, 0_m \in M_m(V)$. Then $T_{n,n+m}$ is an injective homomorphism. Let $\lbrace \mathfrak{V},T_n\rbrace$ be the inductive limit of the directed family $\lbrace M_n(V),T_{n,n+m},\mathbb{N}\rbrace$ so that $T_n = T_{n+m} \circ T_{n,n+m}$ for all $m,n \in \mathbb{N}$. Then $\mathfrak{V}$ is an $\F$-bimodule. We shall call $\mathfrak{V}$ the \emph{matricial inductive limit or direct limit} of $V$.
\end{definition}

\begin{definition}\cite{ER88}
An $\F$-bimodule $\mathfrak{V}$ is said to be \emph{non-degenerate}, if for every $\mathfrak{v} \in \mathfrak{V}$ there exists  $n \in \mathbb{N}$ such that $\I_n \mathfrak{v} \I_n = \mathfrak{v}$. 
\end{definition}

The matricial inductive limit of a complex vector space may be characterized in the following sense:

\begin{theorem}\cite{ER88}
The matricial inductive limit of a complex vector space is a non-degenerate $\F$-bimodule. Conversely, let $\mathfrak{V}$ be a non-degenerate $\F$-bimodule. Put $V=\I_1 \mathfrak{V} \I_1$. Then $V$ is a complex vector space and $\mathfrak{V}$ is its matricial inductive limit in the sense of Definition \ref{c}. Moreover,
\begin{enumerate}
\item[(a)] $T_n(M_n(V)) = \I_n \mathfrak{V} \I_n$.
\item[(b)] $\mathfrak{V}=\displaystyle\bigcup_{n=1}^\infty T_n(M_n(V))$.
\end{enumerate}
\end{theorem}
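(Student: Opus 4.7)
The plan is to handle the two implications separately, with the main work lying in the converse direction where one must reconstruct the inductive system from the bimodule data using the matrix units of $\F$.

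For the forward direction, let $\mathfrak{V}$ be the matricial inductive limit of $V$, i.e., the inductive limit of $\{M_n(V), T_{n,n+m}\}$. Since each $T_{n,n+m}$ is injective and satisfies $T_{n+m,n+m+k}\circ T_{n,n+m}=T_{n,n+m+k}$, each $T_n$ is injective and $\mathfrak{V}=\bigcup_{n} T_n(M_n(V))$. I would define the $\F$-bimodule structure at the level of the directed system: given $\mathfrak{a}\in\F$ and $\mathfrak{v}\in\mathfrak{V}$, pick $N$ large enough that $\mathfrak{a}=\sigma_N(a)$ and $\mathfrak{v}=T_N(v)$ (which is possible since both $\F$ and $\mathfrak{V}$ are exhausted by such pieces), and set $\mathfrak{a}\cdot\mathfrak{v}:=T_N(a\cdot v)$, and analogously on the right. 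The compatibility $(a\oplus 0_m)(v\oplus 0_m)=av\oplus 0_m$ together with $T_{n+m}\circ T_{n,n+m}=T_n$ and $\sigma_{n+m}\circ\sigma_{n,n+m}=\sigma_n$ shows the definition is independent of $N$, so the bimodule structure is well-defined, and the module axioms descend directly from $M_N$ acting on $M_N(V)$. Non-degeneracy is then immediate: if $\mathfrak{v}=T_n(v)$ and we identify $\I_n=\sigma_n(I_n)$, then $\I_n\mathfrak{v}\I_n=T_n(I_n v I_n)=T_n(v)=\mathfrak{v}$.

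For the converse, let $\mathfrak{V}$ be a non-degenerate $\F$-bimodule and set $V=\I_1\mathfrak{V}\I_1$, which is a complex vector subspace of $\mathfrak{V}$. I would construct $T_n\colon M_n(V)\to\I_n\mathfrak{V}\I_n$ by
\[
T_n\bigl([v_{ij}]\bigr)\;=\;\sum_{i,j=1}^{n}\mathfrak{e}_{i1}\,v_{ij}\,\mathfrak{e}_{1j},
\]
noting that the image lies in $\I_n\mathfrak{V}\I_n$ since $\I_n\mathfrak{e}_{i1}=\mathfrak{e}_{i1}$ for $i\le n$. The matrix-unit relations give the inverse: for $\mathfrak{v}\in\I_n\mathfrak{V}\I_n$, put $v_{ij}=\mathfrak{e}_{1i}\mathfrak{v}\mathfrak{e}_{j1}\in V$; then $\mathfrak{e}_{i1}v_{ij}\mathfrak{e}_{1j}=\mathfrak{e}_{ii}\mathfrak{v}\mathfrak{e}_{jj}$, and summing over $i,j$ recovers $\I_n\mathfrak{v}\I_n=\mathfrak{v}$. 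Injectivity is equally clean: applying $\mathfrak{e}_{1i}(\cdot)\mathfrak{e}_{j1}$ to $T_n([v_{ij}])$ returns $v_{ij}$ because $v_{ij}=\mathfrak{e}_{11}v_{ij}\mathfrak{e}_{11}$. A direct computation shows $T_{n+m}\circ T_{n,n+m}=T_n$, since padding $[v_{ij}]$ by zeros simply truncates the defining sum to $i,j\le n$.

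This gives parts (a) and (b) once one invokes non-degeneracy, which forces $\mathfrak{V}=\bigcup_n\I_n\mathfrak{V}\I_n=\bigcup_n T_n(M_n(V))$. To see $\mathfrak{V}$ is genuinely the inductive limit in the sense of Definition \ref{c}, I would verify the universal property: given any cone $\{S_n\colon M_n(V)\to W\}$ with $S_{n+m}\circ T_{n,n+m}=S_n$, define $\phi\colon\mathfrak{V}\to W$ by $\phi(\mathfrak{v})=S_n(T_n^{-1}(\mathfrak{v}))$ for any $n$ with $\mathfrak{v}\in\I_n\mathfrak{V}\I_n$; independence of $n$ follows from the compatibility conditions just proved, and uniqueness of $\phi$ is forced by $\phi\circ T_n=S_n$ on the exhausting family $T_n(M_n(V))$. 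The main obstacle, modest as it is, is the well-definedness argument in both directions: one must consistently use $T_{n+m}\circ T_{n,n+m}=T_n$ (together with the analogous identity in $\F$) to show the constructions do not depend on how high one climbs in the directed system, and the matrix-unit calculation $\mathfrak{e}_{1i}(\mathfrak{e}_{k1}v_{kl}\mathfrak{e}_{1l})\mathfrak{e}_{j1}=\delta_{ik}\delta_{jl}v_{ij}$ is the technical heart that makes the inverse to $T_n$ intrinsic.
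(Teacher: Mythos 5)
Your proposal is correct and follows essentially the same construction the paper relies on: the theorem is quoted from \cite{ER88} without proof, but the key formula $T_n([v_{i,j}])=\sum_{i,j}\mathfrak{e}_{i,1}v_{i,j}\mathfrak{e}_{1,j}$ and its matrix-unit inverse $v_{i,j}=\mathfrak{e}_{1,i}\mathfrak{v}\mathfrak{e}_{j,1}$ are exactly what the paper records in Theorem \ref{w16} for the ordered version. The only detail worth adding is the one-line check that $T_n$ intertwines the $M_n$-action on $M_n(V)$ with the $\sigma_n(M_n)$-action on $\I_n\mathfrak{V}\I_n$, so that the identification is one of $\F$-bimodules and not merely of vector spaces.
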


Let $\mathfrak{V}$ be a non-degenerate $\mathfrak{F}$-bimodule. Also let $\mathfrak{v} \in \mathfrak{V}$ and $\alpha \in \mathbb{C}$. We write $\alpha \mathfrak{v} = (\alpha \I_n) \mathfrak{v}$ for some $n \in \mathbb{N}$ with $\I_n \mathfrak{v} \I_n = \mathfrak{v}$. Then $\alpha \mathfrak{v}$ is well-defined. Thus $\mathfrak{V}$ is a complex vector space.

Now we recall the notion of $\mathfrak{F}$-bimodule norm on a non-degenerate $\mathfrak{F}$-bimodule in the following sense:

\begin{definition}\cite[Definition 1.4]{RKY06}
Let $\mathfrak{V}$ be a non-degenerate $\mathfrak{F}$-bimodule. Let $\Vert \cdot \Vert$ be a norm on $\mathfrak{V}$. Then we say that $\Vert \cdot \Vert$ is an $\mathfrak{F}$-bimodule norm on $\mathfrak{V}$, if $\Vert \mathfrak{a} \mathfrak{v} \mathfrak{b}\Vert \le \Vert \mathfrak{a} \Vert \Vert \mathfrak{v} \Vert \Vert \mathfrak{b} \Vert$ for any $\mathfrak{a}, \mathfrak{b} \in \mathfrak{F}$ and $\mathfrak{v} \in \mathfrak{V}$.
\end{definition}

Next, we describe the order theoretic aspect.

\begin{definition}\cite[Definition 2.6]{RKY05}
Let $\mathfrak{V}$ be an $\F$-bimodule and let $*:\mathfrak{V}\longrightarrow \mathfrak{V}$  be a map satisfying the following conditions:
\begin{enumerate}
\item[1.]$(\mathfrak{u}+\mathfrak{v})^*=\mathfrak{u}^*+\mathfrak{v}^*$ for all $\mathfrak{u},\mathfrak{v}\in \mathfrak{V};$
\item[2.]$(\mathfrak{a} \mathfrak{v})^*=\mathfrak{v}^* \mathfrak{a}^*,~(\mathfrak{v} \mathfrak{a})^*=\mathfrak{a}^* \mathfrak{v}^*$ for all $\mathfrak{v} \in \mathfrak{V},\mathfrak{a} \in \F;$
\item[3.]$(\mathfrak{v}^*)^*=\mathfrak{v}$ for all $\mathfrak{v}\in \mathfrak{V}.$ 
\end{enumerate}
Then $*$ is called an \emph{involution} on $\mathfrak{V}$ and in this case $\mathfrak{V}$ is called a \emph{$*$-$\F$-bimodule}. We put $\mathfrak{V}_{sa}=\lbrace \mathfrak{v} \in \mathfrak{V}: \mathfrak{v}^*=\mathfrak{v} \rbrace.$
\end{definition}

\begin{definition}\cite[Definition 3.2]{RKY05} \label{w46}
Let $\mathfrak{V}$ be a $*$-$\F$-bimodule and let $\mathfrak{V}^+\subset \mathfrak{V}_{sa}$ satisfying the following conditions: 
\begin{enumerate}
\item[1.]$\mathfrak{u} + \mathfrak{v} \in \mathfrak{V}^+$ for all $\mathfrak{u},\mathfrak{v} \in \mathfrak{V}^+;$
\item[2.]$\mathfrak{a}^* \mathfrak{v} \mathfrak{a} \in \mathfrak{V}^+$ for all $\mathfrak{v} \in \mathfrak{V}^+,\mathfrak{a} \in \F.$
\end{enumerate}
Then $\mathfrak{V}^+$ is called a \emph{bimodule cone}  and $(\mathfrak{V},\mathfrak{V}^+)$ is called an \emph{ordered $\F$-bimodule}. 
\end{definition}

\begin{definition}\cite{RKY05}
Let $(\mathfrak{V},\mathfrak{V}^+)$ be an ordered $\F$-bimodule. We say that $\mathfrak{V}^+$ is \emph{proper}, if $\mathfrak{V} \cap (-\mathfrak{V}^+)=\lbrace 0 \rbrace$ and \emph{generating}, if given $\mathfrak{v} \in \mathfrak{V}$ there exist $\mathfrak{v}_0,\mathfrak{v}_1,\mathfrak{v}_2,\mathfrak{v}_3 \in \mathfrak{V}^+$ such that $\mathfrak{v} = \displaystyle \sum_{k=0}^3 i^k \mathfrak{v}_k$, where $i^2=-1.$ We say that $\mathfrak{V}^+$ is \emph{Archimedean}, if for any $\mathfrak{v} \in \mathfrak{V}_{sa}$ with $k \mathfrak{u} + \mathfrak{v} \in \mathfrak{V}^+$ for a fixed $\mathfrak{u} \in \mathfrak{V}^+$ and all positive real numbers $k$, we have $\mathfrak{v} \in \mathfrak{V}^+$.
\end{definition}

Next theorem characterize the direct limit of a matrix ordered space as a non-degenerate ordered $\mathfrak{F}$-bimodule.

\begin{theorem}\cite[Theorem 3.4]{RKY05}\label{w16}
Let $(V,\lbrace M_n(V)^+\rbrace)$ be a matrix ordered space and let $\mathfrak{V}$ be the matricial inductive limit of $V.$ Then $T_{n,n+m}$ and $T_n$ are positive maps. Put $\mathfrak{V}^+ = \displaystyle \bigcup_{n=1}^{\infty} T_n(M_n(V)^+).$ Then $(\mathfrak{V},\mathfrak{V}^+)$ is a non-degenerate ordered $\F$-bimodule. Conversely, let $(\mathfrak{V},\mathfrak{V}^+)$ be a non-degenerate ordered $\F$-bimodule and put $V=\I_1\mathfrak{V} \I_1$. Define $T_n : M_n(V) \to \mathfrak{V}$ given by $$T_n([v_{i,j}])=\displaystyle \sum_{i,j=1}^n \mathfrak{e}_{i,1} v_{i,j} \mathfrak{e}_{1,j}$$ for all $[v_{i,j}] \in M_n(V).$ Then $T_n$ is an injective homomorphism such that $T_n(M_n(V))=\I_n \mathfrak{V} \I_n$ for all $n \in \mathbb{N}$. Set $M_n(V)^+ = T_n^{-1}(\I_n \mathfrak{V}^+\I_n)$ for all $n \in \mathbb{N}$. Then $(V,\lbrace M_n(V)^+\rbrace)$ is a matrix ordered space and $\mathfrak{V}$ is its matricial inductive limit with $\mathfrak{V}^+=\displaystyle \bigcup_{n=1}^{\infty} T_n(M_n(V)^+).$
\end{theorem}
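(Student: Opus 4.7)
The statement is a bijective-correspondence theorem, so I would split the proof into a forward direction (building $(\mathfrak{V}, \mathfrak{V}^+)$ from $(V, \{M_n(V)^+\})$) and a converse direction (recovering a matrix ordered structure on $V := \I_1 \mathfrak{V} \I_1$).

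For the forward direction, the key first step is positivity of $T_{n,n+m}$. For $v \in M_n(V)^+$, I would write $T_{n,n+m}(v) = v \oplus 0_m = a^{*} v a$ with $a = [I_n\ 0] \in M_{n,n+m}$ and invoke axiom (b) of the matrix ordered space. Positivity of $T_n$ then follows from $T_n = T_{n+m} \circ T_{n,n+m}$. To give $\mathfrak{V}$ an involution, I would note that each $T_{n,n+m}$ respects $\ast$, so the involutions glue in the inductive limit. The bimodule-cone axioms of Definition \ref{w46} for $\mathfrak{V}^+ := \bigcup_n T_n(M_n(V)^+)$ are then reduced to the corresponding axioms for some $M_n(V)^+$ by pushing finitely many elements to a common level $n$ (which is possible by non-degeneracy and $\mathfrak{F} = \bigcup_n \sigma_n(M_n)$); in particular, for $\mathfrak{v} = T_n(v) \in \mathfrak{V}^+$ and $\mathfrak{a} = \sigma_n(a) \in \mathfrak{F}$ one checks that $\mathfrak{a}^{*} \mathfrak{v} \mathfrak{a} = T_n(a^{*} v a)$. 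Non-degeneracy is immediate from $\mathfrak{V} = \bigcup_n T_n(M_n(V)) = \bigcup_n \I_n \mathfrak{V} \I_n$.

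For the converse direction, the central object is the map $T_n$ defined in the statement. I would first verify that $T_n$ is linear and injective, using the recovery formula $v_{ij} = \mathfrak{e}_{1,i}\, T_n([v_{ij}])\, \mathfrak{e}_{j,1} \in \I_1 \mathfrak{V} \I_1 = V$. The identification $T_n(M_n(V)) = \I_n \mathfrak{V} \I_n$ comes from the reverse construction: for $\mathfrak{w} \in \I_n \mathfrak{V} \I_n$, setting $v_{ij} := \mathfrak{e}_{1,i}\, \mathfrak{w}\, \mathfrak{e}_{j,1}$ gives $T_n([v_{ij}]) = \sum_{i,j} \mathfrak{e}_{i,i}\, \mathfrak{w}\, \mathfrak{e}_{j,j} = \I_n \mathfrak{w} \I_n = \mathfrak{w}$ by orthogonality of matrix units. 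Compatibility $T_{n+m}(v \oplus 0_m) = T_n(v)$ is then visible directly from the defining sum, confirming that $\mathfrak{V}$ really is the matricial inductive limit of $V$ in the sense of Definition \ref{c}. With $M_n(V)^+ := T_n^{-1}(\I_n \mathfrak{V}^+ \I_n)$, the axioms (a) and (b) of a matrix ordered space transfer from the bimodule-cone axioms of $\mathfrak{V}^+$ via the identity $T_m(a^{*} v a) = \sigma_{n,m}(a)^{*}\, T_n(v)\, \sigma_{n,m}(a)$ (after appropriate embedding). The equality $\mathfrak{V}^+ = \bigcup_n T_n(M_n(V)^+)$ then follows from non-degeneracy: any $\mathfrak{v} \in \mathfrak{V}^+$ satisfies $\mathfrak{v} = \I_n \mathfrak{v} \I_n$ for some $n$, so $\mathfrak{v} \in \I_n \mathfrak{V}^+ \I_n = T_n(M_n(V)^+)$.

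The main obstacle I anticipate is bookkeeping between levels of the inductive system: one must track how multiplication by matrix units $\mathfrak{e}_{ij} \in \mathfrak{F}$ on elements $T_n(v) \in \mathfrak{V}$ corresponds to ordinary matrix operations on $v \in M_n(V)$ via the embedding $\sigma_{n,m}$. Once this dictionary is set up cleanly, the bimodule-cone axioms on $\mathfrak{V}$ mirror axiom (b) of the matrix ordered space on each finite level, and every verification reduces to a short finite-dimensional calculation.
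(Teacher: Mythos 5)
Your proposal is correct: the level-by-level dictionary ($T_{n,n+m}(v)=a^*va$ with $a=[I_n\ 0]$, the recovery formula $v_{ij}=\mathfrak{e}_{1,i}T_n([v_{ij}])\mathfrak{e}_{j,1}$, and the identity $T_m(a^*va)=\mathfrak{a}^*T_n(v)\mathfrak{a}$) is exactly the standard argument for this correspondence. Note that the paper itself gives no proof here --- the theorem is recalled verbatim from \cite[Theorem 3.4]{RKY05} --- and your outline matches the approach of that cited source, with only routine details (well-definedness of the directed union $\mathfrak{V}^+$ across levels and compatibility of the induced involution on $M_n(V)$ with the entrywise one) left implicit.
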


\begin{remark}\label{h} 
	Let $(\mathfrak{V},\mathfrak{V}^+)$ be a non-degenerate ordered $\F$-bimodule and let $(V, \lbrace M_n(V)^+\rbrace)$ be the corresponding matrix ordered space as in Theorem \ref{w16}. Then 
\begin{enumerate}
\item[(1)] $\mathfrak{V}^+$ is proper if and only if $V^+$ is proper \cite[Theorem 3.9]{RKY05}. 
\item[(2)] $\mathfrak{V}^+$ is generating if and only if $V^+$ is generating \cite[Theorem 3.12]{RKY05}.
\item[(3)] $\mathfrak{V}^+$ is Archimedean if and only if $M_n(V)^+$ is Archimedean for each $n \in \mathbb{N}$.
\end{enumerate}
\end{remark}

\section{Inductive (direct) limit of absolute matrix order unit spaces}

In this section, we describe the direct limits of absolutely matrix ordered spaces and absolute matrix order unit spaces. First, we recall the following notion introduced in \cite{K19}. 

\begin{definition}\cite[Definition 4.1]{K19}\label{152}
Let $(V, \lbrace \ M_n(V)^+ \rbrace)$ be a matrix ordered space and assume that  $\vert\cdot\vert_{m,n}: M_{m,n}(V) \to M_n(V)^+$ for $m, n \in \mathbb{N}$. Let us write $\vert\cdot\vert_{n,n} = \vert\cdot\vert_n$ for every $n \in \mathbb{N}$. Then $\left(V, \lbrace M_n(V)^+ \rbrace, \lbrace \vert\cdot\vert_{m,n} \rbrace \right)$ is called an \emph{absolutely matrix ordered space}, if it satisfies the following conditions: 
\begin{enumerate}
\item[$1.$] For all $n \in \mathbb{N}$, $(M_n(V)_{sa}, M_n(V)^+, \vert\cdot\vert_n)$ is an absolutely ordered space;
\item[$2.$] For $v \in M_{m,n}(V), \alpha \in M_{r,m}$ and $\beta \in M_{n,s},$ we have
$$\vert \alpha v \beta \vert_{r,s} \leq \| \alpha \| \vert \vert v \vert_{m,n} \beta \vert_{n,s};$$
\item[$3.$] For $v \in M_{m,n}(V)$ and $w \in M_{r,s}(V),$ we have
$$\vert v \oplus w\vert_{m+r,n+s} = \vert v \vert_{m,n} \oplus \vert w \vert_{r,s}.$$
Here $v \oplus w := \begin{bmatrix} v & 0 \\ 0 & w \end{bmatrix}$.
\end{enumerate} 
\end{definition}

\begin{proposition}\cite[Proposition 4.2]{K19}\label{151}
Let $(V, \lbrace M_n(V)^+ \rbrace, \lbrace \vert\cdot\vert_{m,n} \rbrace)$ be an absolutely matrix ordered space.  
\begin{enumerate}
\item[$1.$] If $\alpha \in M_{r,m}$ is an isometry i.e. $\alpha^* \alpha = I_m,$ then $\vert \alpha v \vert_{r,n} = \vert v \vert_{m,n}$ for any $v \in M_{m,n}(V).$
\item[$2.$] If $v \in M_{m,n}(V),$ then $\left\vert \begin{bmatrix} 0_m & v \\ v^* & 0_n \end{bmatrix} \right\vert_{m+n} = \vert v^* \vert_{n,m} \oplus \vert v \vert_{m,n}.$
\item[$3.$] $\begin{bmatrix} \vert v^* \vert_{n,m} & v \\ v^* & \vert v \vert_{m,n} \end{bmatrix} \in M_{m+n}(V)^+$ for any $v \in M_{m,n}(V).$
\item[$4.$] $\vert v \vert_{m,n} = \left\vert \begin{bmatrix} v \\ 0 \end{bmatrix} \right\vert_{m+r,n}$ for any $v \in M_{m,n}(V)$ and $r \in \mathbb{N}.$
\item[$5.$] $\vert v \vert_{m,n} \oplus 0_s = \left\vert \begin{bmatrix} v & 0 \end{bmatrix} \right\vert_{m,n+s}$ for any $v \in M_{m,n}(V)$ and $s \in \mathbb{N}.$
\end{enumerate} 
\end{proposition}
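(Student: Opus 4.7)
The plan is to prove the five assertions in the order $(1) \Rightarrow (4) \Rightarrow (5)$ and $(2) \Rightarrow (3)$, with $(1)$ also feeding into the proof of $(2)$ via an isometry argument. Each step draws only on axioms $(1)$--$(3)$ of Definition~\ref{152} together with the absolutely ordered space structure on $M_{m+n}(V)_{sa}$.

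For $(1)$, I would apply axiom $(2)$ of Definition~\ref{152} with $\beta = I_n$ and the isometry $\alpha$ (so $\|\alpha\| = 1$) to obtain $\vert\alpha v\vert_{r,n} \le \vert v\vert_{m,n}$. For the reverse inequality, write $v = \alpha^*(\alpha v)$ using $\alpha^*\alpha = I_m$, and apply the same axiom with $\alpha^*$ (also of norm one) to conclude $\vert v\vert_{m,n} \le \vert\alpha v\vert_{r,n}$. Part $(4)$ is then immediate: the matrix $\iota := \begin{bmatrix} I_m \\ 0_{r,m} \end{bmatrix} \in M_{m+r,m}$ is an isometry with $\begin{bmatrix} v \\ 0 \end{bmatrix} = \iota v$, so $(1)$ applies. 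For $(5)$, I would first use axiom $(3)$ of Definition~\ref{152} to write $\vert v \oplus 0_{r,s}\vert_{m+r,n+s} = \vert v\vert_{m,n} \oplus 0_s$, and then view $\begin{bmatrix} v & 0 \end{bmatrix} \in M_{m,n+s}(V)$ and apply $(4)$ (with $r$ playing its original role):
\[ \left\vert \begin{bmatrix} v & 0 \end{bmatrix} \right\vert_{m,n+s} = \left\vert \begin{bmatrix} v & 0 \\ 0 & 0 \end{bmatrix} \right\vert_{m+r,n+s} = \vert v \oplus 0_{r,s}\vert_{m+r,n+s} = \vert v\vert_{m,n} \oplus 0_s. \]

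Part $(2)$ is where the real work lies and is the main obstacle. The difficulty is that $w := \begin{bmatrix} 0 & v \\ v^* & 0 \end{bmatrix}$ is a self-adjoint \emph{off-diagonal} block, so axiom $(3)$ of Definition~\ref{152} does not apply directly; and in the absence of a multiplicative structure one cannot imitate the C$^*$-algebraic identity $\vert w\vert = (w^2)^{1/2}$ which would immediately yield the answer. The plan is to work inside the absolutely ordered space $M_{m+n}(V)_{sa}$ and verify that the candidate $D := \vert v^*\vert_{n,m} \oplus \vert v\vert_{m,n}$ satisfies the properties that characterize $\vert w\vert_{m+n}$ in an absolutely ordered space. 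A natural device is the swap isometry $u := \begin{bmatrix} 0 & I_n \\ I_m & 0 \end{bmatrix} \in M_{m+n}$, which together with $(1)$ and axiom $(3)$ of Definition~\ref{152} pins down $\vert w\vert_{m+n}$ via the defining identities of $\vert\cdot\vert$ on the self-adjoint part.

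Finally, $(3)$ is a corollary of $(2)$: since $w$ is self-adjoint, the absolutely ordered space axiom $\vert w\vert + w \in M_{m+n}(V)^+$, combined with the identity of $(2)$, yields exactly the asserted positivity of $\begin{bmatrix} \vert v^*\vert_{n,m} & v \\ v^* & \vert v\vert_{m,n} \end{bmatrix}$.
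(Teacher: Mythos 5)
This proposition is quoted from \cite[Proposition 4.2]{K19} and the present paper gives no proof of it, so there is nothing in-paper to compare against; I therefore assess your argument on its own terms. Your proofs of (1), (4) and (5) are correct and complete: (1) is the standard two-sided application of axiom (2) of Definition~\ref{152} (using $\vert u \vert_n = u$ for $u \in M_n(V)^+$ and properness of the cone to turn the two inequalities into an equality), and (4), (5) then follow exactly as you describe. The deduction of (3) from (2) via $\vert w \vert_{m+n} + w \in M_{m+n}(V)^+$ for self-adjoint $w$ is also fine.

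The gap is in (2), which you correctly identify as the crux but leave as a plan, and the plan as stated does not close. First, ``verify that $D := \vert v^* \vert_{n,m} \oplus \vert v \vert_{m,n}$ satisfies the properties that characterize $\vert w \vert_{m+n}$'' is not an executable strategy: in an absolutely ordered space $\vert \cdot \vert_{m+n}$ is primitive data, pinned down only through the uniqueness of the orthogonal decomposition $w = w^+ - w^-$, so identifying $\vert w \vert_{m+n}$ with $D$ this way would force you to exhibit $w^{\pm}$ explicitly, which is as hard as the original problem. Second, the swap matrix enters as a \emph{right} multiplier, not a left one: one has $w = (v \oplus v^*)\beta$ with $\beta = \begin{bmatrix} 0 & I_n \\ I_m & 0 \end{bmatrix} \in M_{n+m,m+n}$, so your part (1), which governs left multiplication by an isometry, does not apply. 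The missing ingredient is the right-handed covariance $\vert x \beta \vert = \beta^* \vert x \vert \beta$ for $\beta$ with $\beta\beta^* = I$ and $\beta^*\beta = I$. This is obtained in two steps from axiom (2) of Definition~\ref{152} and properness: first, for $u \geq 0$, write $u\beta = \beta(\beta^* u \beta)$ to get $\vert u\beta \vert \leq \vert \beta^* u \beta \vert = \beta^* u \beta$ and conversely $\beta^* u \beta = \vert \beta^*(u\beta) \vert \leq \vert u \beta \vert$, whence $\vert u \beta \vert = \beta^* u \beta$; then for general $x$, $\vert x\beta \vert \leq \vert \vert x \vert \beta \vert = \beta^* \vert x \vert \beta$ and $\vert x \vert = \vert (x\beta)\beta^* \vert \leq \beta \vert x\beta \vert \beta^*$, and properness gives equality. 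With this lemma, (2) is immediate: $\vert w \vert_{m+n} = \beta^* \vert v \oplus v^* \vert \beta = \beta^* \left( \vert v \vert_{m,n} \oplus \vert v^* \vert_{n,m} \right) \beta = \vert v^* \vert_{n,m} \oplus \vert v \vert_{m,n}$, where the middle equality is axiom (3) of Definition~\ref{152}. Once you isolate and prove that covariance identity, your outline is complete.
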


\begin{definition}\cite[Definition 4.3]{K19}
Let $(V, \lbrace M_n(V)^+ \rbrace, e)$ be a matrix order unit space such that 
\begin{enumerate}
\item[(a)] $\left(V, \lbrace M_n(V)^+ \rbrace, \lbrace \vert \cdot \vert_{m,n} \rbrace \right)$ is an absolutely matrix ordered space; and
\item[(b)]$\perp = \perp_{\infty}^a$ on $M_n(V)^+$ for all $n \in \mathbb{N}.$ 
\end{enumerate}
Then $(V, \lbrace M_n(V)^+ \rbrace, \lbrace \vert\cdot\vert_{m,n} \rbrace, e)$ is called an \emph{absolute matrix order unit space}. 
\end{definition}

Next, we define the order of an element in a non-degenerate $\F$-bimodule which we need in describing the direct limit of absolute matrix ordered space.

\begin{definition}
Let $\mathfrak{V}$ be a non-degenerate $\F$-bimodule and let $\mathfrak{v} \in \mathfrak{V}.$ Then the smallest $n\in \mathbb{N}$ such that $\I_n \mathfrak{v} \I_n = \mathfrak{v}$ is called the \emph{order} of $\mathfrak{v}$ in $\mathfrak{V}.$ We denote the order of $\mathfrak{v}$ in $\mathfrak{V}$ by $o(\mathfrak{v}).$ 
\end{definition}

\begin{proposition}\label{150}
Let $(\mathfrak{V},\mathfrak{V}^{+})$ be a non-degenerate ordered $\F$-bimodule and assume that $\mathfrak{V}^+$ is proper and Archimedean. Then for $\mathfrak{u},\mathfrak{v} \in \mathfrak{V}^+$ with $\mathfrak{u} \le \mathfrak{v}$, we have $o(\mathfrak{u}) \le o(\mathfrak{v})$.
\end{proposition}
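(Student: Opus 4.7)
Set $n:=o(\mathfrak{v})$, and pick $k\geq n$ large enough that $\I_k\mathfrak{u}\I_k=\mathfrak{u}$ (say $k=\max\{n,o(\mathfrak{u})\}$). The plan is to show $\I_n\mathfrak{u}\I_n=\mathfrak{u}$, which will give $o(\mathfrak{u})\leq n=o(\mathfrak{v})$. The key technical tool is the self-adjoint idempotent $\mathfrak{p}:=\I_k-\I_n\in\F$, which annihilates $\mathfrak{v}$ on both sides: since $\I_n\mathfrak{v}\I_n=\mathfrak{v}$ and $\mathfrak{p}\I_n=\I_n\mathfrak{p}=0$, we obtain $\mathfrak{p}\mathfrak{v}=\mathfrak{v}\mathfrak{p}=0$, and in particular $\mathfrak{p}\mathfrak{v}\mathfrak{p}=0$.

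First I would extract the corner of $\mathfrak{u}$ cut out by $\mathfrak{p}$. Both $\mathfrak{p}\mathfrak{u}\mathfrak{p}$ and $\mathfrak{p}(\mathfrak{v}-\mathfrak{u})\mathfrak{p}$ lie in $\mathfrak{V}^+$ by the bimodule cone property, and their sum equals $\mathfrak{p}\mathfrak{v}\mathfrak{p}=0$. Properness of $\mathfrak{V}^+$ (applied to $\mathfrak{p}\mathfrak{u}\mathfrak{p}=-\mathfrak{p}(\mathfrak{v}-\mathfrak{u})\mathfrak{p}\in\mathfrak{V}^+\cap(-\mathfrak{V}^+)=\{0\}$) then forces $\mathfrak{p}\mathfrak{u}\mathfrak{p}=0$.

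Next I would kill the off-diagonal cross terms $\mathfrak{w}':=\I_n\mathfrak{u}\mathfrak{p}+\mathfrak{p}\mathfrak{u}\I_n$ using Archimedeanity. For $t\in\R$ set $\mathfrak{b}_t:=\I_n+t\mathfrak{p}\in\F_{sa}$; then $\mathfrak{b}_t\mathfrak{v}\mathfrak{b}_t=\mathfrak{v}$ (since $\mathfrak{p}\mathfrak{v}=\mathfrak{v}\mathfrak{p}=0$), while the previous step gives $\mathfrak{b}_t\mathfrak{u}\mathfrak{b}_t=\I_n\mathfrak{u}\I_n+t\mathfrak{w}'$. Writing $\mathfrak{x}:=\mathfrak{v}-\I_n\mathfrak{u}\I_n=\I_n(\mathfrak{v}-\mathfrak{u})\I_n\in\mathfrak{V}^+$, positivity of $\mathfrak{v}-\mathfrak{b}_t\mathfrak{u}\mathfrak{b}_t$ reads $\mathfrak{x}-t\mathfrak{w}'\in\mathfrak{V}^+$ for every $t\in\R$. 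Substituting $t=\pm1/k$ and scaling by $k>0$ yields $k\mathfrak{x}\pm\mathfrak{w}'\in\mathfrak{V}^+$ for all $k>0$; the Archimedean hypothesis (with fixed $\mathfrak{x}\in\mathfrak{V}^+$) delivers $\pm\mathfrak{w}'\in\mathfrak{V}^+$, and properness then gives $\mathfrak{w}'=0$.

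Finally I would read off the conclusion. Multiplying the identity $\I_n\mathfrak{u}\mathfrak{p}+\mathfrak{p}\mathfrak{u}\I_n=0$ on the left by $\I_n$ and on the right by $\mathfrak{p}$ (using $\I_n\mathfrak{p}=\mathfrak{p}\I_n=0$ and $\mathfrak{p}^2=\mathfrak{p}$) isolates $\I_n\mathfrak{u}\mathfrak{p}=0$; symmetrically, $\mathfrak{p}\mathfrak{u}\I_n=0$. Combined with $\mathfrak{p}\mathfrak{u}\mathfrak{p}=0$, the decomposition $\mathfrak{u}=\I_k\mathfrak{u}\I_k=(\I_n+\mathfrak{p})\mathfrak{u}(\I_n+\mathfrak{p})$ collapses to $\mathfrak{u}=\I_n\mathfrak{u}\I_n$, as required. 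The main obstacle I anticipate is Step 3 (the Archimedean argument): getting a clean Archimedean inequality of the form $k\mathfrak{x}+\mathfrak{y}\in\mathfrak{V}^+$ from the two-parameter bound $\mathfrak{x}-t\mathfrak{w}'\in\mathfrak{V}^+$ is what makes the parameter substitution $t\mapsto\pm1/k$ essential; without Archimedeanity one would only get $\mathfrak{w}'$ arbitrarily small in a partial order sense, which is not enough.
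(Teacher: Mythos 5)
Your argument is correct, and every step is justified by the axioms as the paper states them: the bimodule cone property gives positivity of $\mathfrak{p}\mathfrak{u}\mathfrak{p}$, $\I_n(\mathfrak{v}-\mathfrak{u})\I_n$ and $\mathfrak{b}_t(\mathfrak{v}-\mathfrak{u})\mathfrak{b}_t$ (note $\mathfrak{b}_t^*=\mathfrak{b}_t$ for real $t$), positive scalar multiples stay in the cone via $k\mathfrak{w}=(\sqrt{k}\,\I_m)^*\mathfrak{w}(\sqrt{k}\,\I_m)$, and $\mathfrak{w}'$ is self-adjoint, so the Archimedean axiom applies. Your route differs from the paper's in execution. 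The paper argues by contradiction and entrywise: assuming $n=o(\mathfrak{u})>o(\mathfrak{v})$, it first kills the single corner entry $\mathfrak{e}_{1,n}\mathfrak{u}\mathfrak{e}_{n,1}$ by properness, then picks a nonzero off-diagonal entry $\mathfrak{e}_{1,j}\mathfrak{u}\mathfrak{e}_{n,1}$ and kills it by a complex polarization $\mathfrak{a}=\lambda\mathfrak{e}_{j,1}+\mathfrak{e}_{n,1}$ together with Archimedeanity and properness (running over $\alpha$ with $\vert\alpha\vert=1$), reaching a contradiction. You instead work blockwise with the complementary idempotent $\mathfrak{p}=\I_k-\I_n$ and the real one-parameter family $\I_n+t\mathfrak{p}$, and prove directly the stronger structural statement $\I_{o(\mathfrak{v})}\mathfrak{u}\I_{o(\mathfrak{v})}=\mathfrak{u}$. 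The two proofs share the same two engines (properness annihilates the far corner; a quadratic perturbation plus Archimedeanity annihilates the cross terms), but your block formulation avoids both the contradiction and the case analysis over individual matrix units, and it packages the conclusion in a form that is arguably more reusable. The one point worth making explicit if you write this up is the identity $\mathfrak{p}\mathfrak{v}=\mathfrak{v}\mathfrak{p}=0$, which you do state and which is what makes $\mathfrak{b}_t\mathfrak{v}\mathfrak{b}_t=\mathfrak{v}$; otherwise nothing is missing.
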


\begin{proof}
If possible, assume that $n=o(\mathfrak{u}) > o(\mathfrak{v})$. Then $\mathfrak{e}_{1,n} \mathfrak{v} \mathfrak{e}_{n,1}=0$ so that $0 \le \mathfrak{e}_{1,n} \mathfrak{u} \mathfrak{e}_{n,1} \le \mathfrak{e}_{1,n} \mathfrak{v} \mathfrak{e}_{n,1} = 0$. 
As $\mathfrak{V}^+$ is proper, we get $\mathfrak{e}_{1,n} \mathfrak{u} \mathfrak{e}_{n,1}=0$. Since $o(\mathfrak{u})=n$, there exists $1 \le j < n$ such that 
$\mathfrak{e}_{1,j} \mathfrak{u} \mathfrak{e}_{n,1} \neq 0$. Let $\lambda \in \mathbb{C}$ and put $\mathfrak{a} = (\lambda \mathfrak{e}_{j,1}+\mathfrak{e}_{n,1})$. Then 
$\mathfrak{a}^* \mathfrak{u} \mathfrak{a} \in \mathfrak{V}^+$ so that 
$$\vert \lambda \vert^2 \mathfrak{e}_{1,j} \mathfrak{u} \mathfrak{e}_{j,1} + \bar{\lambda} \mathfrak{e}_{1,j} \mathfrak{u} \mathfrak{e}_{n,1}+\lambda \mathfrak{e}_{1,n} \mathfrak{u} \mathfrak{e}_{j,1} \in \mathfrak{V}^+$$ 
for all $\lambda \in \mathbb{C}$. Thus 
$$k \mathfrak{e}_{1,j} \mathfrak{u} \mathfrak{e}_{j,1} + \bar{\alpha} \mathfrak{e}_{1,j} \mathfrak{u} \mathfrak{e}_{n,1}+ \alpha \mathfrak{e}_{1,n} \mathfrak{u} \mathfrak{e}_{j,1} \in \mathfrak{V}^+$$ 
for all $k > 0$ and $\alpha \in \mathbb{C}$ with $\vert \alpha \vert =1$. We have $\mathfrak{e}_{1,j} \mathfrak{u} \mathfrak{e}_{j,1} \in \mathfrak{V}^+$ and $\mathfrak{V}^+$ is Archimedean, we conclude that  
$\bar{\alpha} \mathfrak{e}_{1,j} \mathfrak{u} \mathfrak{e}_{n,1}+ \alpha \mathfrak{e}_{1,n} \mathfrak{u} \mathfrak{e}_{j,1} \in \mathfrak{V}^+$ for all $\alpha \in \mathbb{C}$ with $\vert \alpha \vert =1$. Since $\mathfrak{V}^+$ is proper, a standard use of $\alpha \in \mathbb{C}$ with $\vert \alpha \vert = 1$ yields that $\mathfrak{e}_{1,j} \mathfrak{u} \mathfrak{e}_{n,1} = 0$ which is a contradiction. Hence $o(\mathfrak{u}) \le o(\mathfrak{v})$.
\end{proof}

\begin{definition}Let $\mathfrak{V}$ be a non-degenerate $\F$-bimodule and let $\mathfrak{u},\mathfrak{v} \in \mathfrak{V}.$ Then $\mathfrak{u}$ and $\mathfrak{v}$ are said to be \emph{$\F$-independent} in $\mathfrak{V}$, if there exist $I,J\subset\mathbb{N}$ with $I\cap J=\phi$ such that $\mathfrak{r} \mathfrak{u} \mathfrak{r} = \mathfrak{u}$ and $\mathfrak{s} \mathfrak{v} \mathfrak{s} = \mathfrak{v}$ where $\mathfrak{r}=\displaystyle\sum_{i\in I} \mathfrak{e}_{i,i}$ and $\mathfrak{s}=\displaystyle\sum_{j\in J} \mathfrak{e}_{j,j}.$ 
\end{definition}

\textbf{A notation:} Let $(\mathfrak{V},\mathfrak{V}^{+})$ be a non-degenerate ordered $\F$-bimodule and let $\mathfrak{v}_1,\mathfrak{v}_2 \in \mathfrak{V}^+$ with $\I_n \mathfrak{v}_1 \I_n = \mathfrak{v}_1$ and $\I_n \mathfrak{v}_2 \I_n = \mathfrak{v}_2$ for some $n \in \mathbb{N}$. Put $\J_n = \displaystyle \sum_{i=1}^n \mathfrak{e}_{i,n+i}$. Then we write $(\mathfrak{v}_1,\mathfrak{v}_2)_n^+ = \mathfrak{v}_1 + \J_n^* \mathfrak{v}_2 \J_n$ and $sa_n(\mathfrak{v})=\I_n \mathfrak{v} \J_n + \J_n^* \mathfrak{v}^* \I_n$ for some $\mathfrak{v} \in \mathfrak{V}$ with $\I_n \mathfrak{v} \I_n = \mathfrak{v}.$ 

Let $(\mathfrak{V},\mathfrak{V}^+)$ be the matricial inductive limit of matrix ordered space $(V, \lbrace M_n(V)^+\rbrace)$. Put $v_1 = T_n^{-1}(\mathfrak{v}_1),v_2 = T_n^{-1}(\mathfrak{v}_2)$ and $v=T_n^{-1}(\mathfrak{v})$. Then $\begin{bmatrix}v_1 & 0 \\ 0 & v_2\end{bmatrix} = T_{2n}^{-1}((\mathfrak{v}_1,\mathfrak{v}_2)_n^+)$ and $\begin{bmatrix} 0 & v \\ v^* & 0 \end{bmatrix} = T_{2n}^{-1}(sa_n(\mathfrak{v}))$. Hence $\begin{bmatrix} v_1 & \pm v \\ \pm v^* & v_2 \end{bmatrix} = T_{2n}^{-1}((\mathfrak{v}_1,\mathfrak{v}_2)_n^+ \pm sa_n(\mathfrak{v}))$.

\begin{definition}\label{18}
Let $(\mathfrak{V},\mathfrak{V}^{+})$ be a non-degenerate ordered $\F$-bimodule. Let $\vert \cdot \vert:\mathfrak{V}\longrightarrow \mathfrak{V}^{+}$ be a map satisfying the following conditions:
\begin{enumerate}
\item[1.]$o(\vert \mathfrak{v}\vert) \le o(\mathfrak{v});$
\item[2.]$\vert \mathfrak{v} \vert=\mathfrak{v}$ if $\mathfrak{v} \in \mathfrak{V}^{+};$
\item[3.]${(\vert \mathfrak{v}^*\vert,\vert \mathfrak{v}\vert)_{o(\mathfrak{v})}}^{+}+sa_{o(\mathfrak{v})}(\mathfrak{v})\in \mathfrak{V}^{+}$ for all  $\mathfrak{v} \in \mathfrak{V};$
\item[4.]For $\mathfrak{v} \in \mathfrak{V}$ and $\mathfrak{a} \in \F$
\begin{center}
${\vert\mathfrak{a} \mathfrak{v} \mathfrak{b} \vert} \le \Vert \mathfrak{a} \Vert \vert \vert \mathfrak{v} \vert \mathfrak{b} \vert;$
\end{center}
\item[5.]For $\mathfrak{u}$ and $\mathfrak{v}~\F$-independent elements in $(\mathfrak{V},\mathfrak{V}^{+})$, we have
\begin{center}
${\vert \mathfrak{u} + \mathfrak{v}\vert}={\vert \mathfrak{u} \vert}+ \vert \mathfrak{v} \vert;$
\end{center} 
\item[6.]For $\mathfrak{u},\mathfrak{v}$ and $\mathfrak{w} \in \mathfrak{V}^{+}$ with $\vert \mathfrak{u} - \mathfrak{v} \vert=\mathfrak{u}+\mathfrak{v}$ and $\mathfrak{0} \le \mathfrak{w} \le \mathfrak{v},$ we have $\vert \mathfrak{u}-\mathfrak{w}\vert=\mathfrak{u}+\mathfrak{w};$ 
\item[7.]For $\mathfrak{u},\mathfrak{v}$ and $\mathfrak{w} \in \mathfrak{V}^{+}$ with $\vert \mathfrak{u} - \mathfrak{v} \vert=\mathfrak{u}+\mathfrak{v}$ and $\vert \mathfrak{u}-\mathfrak{w} \vert=\mathfrak{u}+\mathfrak{w},$ we have $\vert \mathfrak{u}-\vert\mathfrak{v}\pm\mathfrak{w}\vert\vert=\mathfrak{u}+\vert \mathfrak{v} \pm \mathfrak{w} \vert.$
\end{enumerate}
Then $(\mathfrak{V},\mathfrak{V}^{+},\vert \cdot \vert)$ is said to be a \emph{non-degenerate absolutely ordered $\F$-bimodule}.
\end{definition}

\begin{proposition}
Let $(\mathfrak{V},\mathfrak{V}^{+},\vert \cdot \vert)$ be a non-degenerate absolutely ordered $\F$-bimodule. Then $\vert \mathfrak{a} \mathfrak{v} \vert = \vert \mathfrak{v} \vert$ for any $\mathfrak{v} \in \mathfrak{V}$ and $\mathfrak{a} \in \F$ with $\mathfrak{a}^*\mathfrak{a} = \I_{o(\mathfrak{v})}$.
\end{proposition}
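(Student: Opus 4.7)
The strategy is to sandwich $\vert \mathfrak{a}\mathfrak{v} \vert$ between $\vert \mathfrak{v} \vert$ and itself by invoking item $4$ of Definition \ref{18} (the submultiplicativity axiom) twice. Write $n = o(\mathfrak{v})$, so by hypothesis $\mathfrak{a}^* \mathfrak{a} = \I_n$. Since $\F$ carries the operator norm inherited from $B(\ell^2)$, the relation $\mathfrak{a}^*\mathfrak{a} = \I_n$ forces $\Vert \mathfrak{a} \Vert = 1$ and hence also $\Vert \mathfrak{a}^* \Vert = 1$.

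First I would prove $\vert \mathfrak{a}\mathfrak{v} \vert \le \vert \mathfrak{v} \vert$. Apply item $4$ with $\mathfrak{b} = \I_n$: because $\mathfrak{v}\I_n = \mathfrak{v}$ by the choice of $n$, because $o(\vert \mathfrak{v} \vert) \le n$ by item $1$ (so that $\vert \mathfrak{v} \vert \I_n = \vert \mathfrak{v} \vert$), and because item $2$ applied to $\vert \mathfrak{v} \vert \in \mathfrak{V}^+$ gives $\vert\, \vert \mathfrak{v} \vert\, \vert = \vert \mathfrak{v} \vert$, we obtain
\[
\vert \mathfrak{a}\mathfrak{v} \vert = \vert \mathfrak{a}\mathfrak{v}\I_n \vert \le \Vert \mathfrak{a} \Vert \cdot \vert\, \vert \mathfrak{v} \vert \I_n\, \vert = \vert \mathfrak{v} \vert.
\]
For the reverse inequality, I would apply item $4$ again with $\mathfrak{a}^*$ in place of $\mathfrak{a}$ and $\mathfrak{a}\mathfrak{v}$ in place of $\mathfrak{v}$, taking $\mathfrak{b} = \I_m$ where $m$ is chosen (by non-degeneracy) so that $\I_m(\mathfrak{a}\mathfrak{v})\I_m = \mathfrak{a}\mathfrak{v}$. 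Since $\mathfrak{a}^*(\mathfrak{a}\mathfrak{v}) = \I_n \mathfrak{v} = \mathfrak{v}$, the analogous computation yields $\vert \mathfrak{v} \vert \le \vert \mathfrak{a}\mathfrak{v} \vert$. Combining the two inequalities with the (implicitly presupposed) properness of $\mathfrak{V}^+$ gives $\vert \mathfrak{a}\mathfrak{v} \vert = \vert \mathfrak{v} \vert$.

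The only delicate points are the order-bookkeeping identities $\vert \mathfrak{v} \vert \I_n = \vert \mathfrak{v} \vert$ and $\vert \mathfrak{a}\mathfrak{v} \vert \I_m = \vert \mathfrak{a}\mathfrak{v} \vert$, which follow at once from item $1$ and non-degeneracy, and the norm identity $\Vert \mathfrak{a}^* \Vert = \Vert \mathfrak{a} \Vert = 1$, which is a routine consequence of the C*-property of the operator norm on $\F$. Neither step constitutes a serious obstacle, so I expect the proof to close in a short paragraph.
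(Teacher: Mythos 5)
Your proposal is correct and follows essentially the same route as the paper: two applications of axiom (4) of Definition \ref{18} (using $\Vert \mathfrak{a}\Vert = \Vert \mathfrak{a}^*\Vert = 1$ and $\mathfrak{a}^*\mathfrak{a}\mathfrak{v} = \mathfrak{v}$) to obtain $\vert \mathfrak{a}\mathfrak{v}\vert \le \vert \mathfrak{v}\vert \le \vert \mathfrak{a}\mathfrak{v}\vert$, then properness of $\mathfrak{V}^+$ to conclude equality. Your explicit attention to the order-bookkeeping and to the properness step is, if anything, slightly more careful than the paper's one-line chain.
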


\begin{proof}
Let $\mathfrak{a} \in \F$ with $\mathfrak{a}^*\mathfrak{a} = \I_{o(\mathfrak{v})}$. Then, by Definition \ref{18}(4), we get that
$$\vert \mathfrak{a} \mathfrak{v} \vert \le \| \mathfrak{a} \| \vert \mathfrak{v} \vert = \vert \mathfrak{a}^* \mathfrak{a} \mathfrak{v} \vert \le \| \mathfrak{a}^* \| \vert \mathfrak{a} \mathfrak{v} \vert = \vert \mathfrak{a} \mathfrak{v} \vert .$$ 
so that $\vert \mathfrak{a} \mathfrak{v} \vert = \vert \mathfrak{v} \vert.$
\end{proof}

\begin{remark}
Let $(\mathfrak{V},\mathfrak{V}^{+},\vert \cdot \vert)$ be a non-degenerate absolutely ordered $\F$-bimodule. Then
\begin{enumerate}
\item[(a)] $\vert \alpha \mathfrak{v}\vert = \vert \alpha \vert \vert \mathfrak{v}\vert$ for all $\mathfrak{v} \in \mathfrak{V}$ and $a \in \mathbb{C}$. To see this, let $\mathfrak{v} \in \mathfrak{V}$ and $\alpha \in \mathbb{C}$. Then $\alpha \mathfrak{v} = (\alpha \I_{o(\mathfrak{v})}) \mathfrak{v}$. Thus by \ref{18}(4), we have $\vert \alpha \mathfrak{v}\vert = \vert \alpha \vert \vert \mathfrak{v}\vert$. 
\item[(b)] $\mathfrak{V}^+$ is proper. To verify this, let $\pm \mathfrak{v} \in \mathfrak{V}^+.$ Then by \ref{18}(2) and by (a), we have $\mathfrak{v} = \vert \mathfrak{v}\vert = \vert -\mathfrak{v}\vert = -\mathfrak{v}$ so that $\mathfrak{v}=0.$
\item[(c)] $\mathfrak{V}^+$ is also generating. To see this, let $\mathfrak{v} \in \mathfrak{V}_{sa}.$ Then by \ref{18}(3), we get that $\vert \mathfrak{v}\vert \pm \mathfrak{v} \in \mathfrak{V}^+$ and hence $$\mathfrak{v} = \frac{1}{2}\left( (\vert \mathfrak{v} \vert + \mathfrak{v}) - (\vert \mathfrak{v} \vert - \mathfrak{v}) \right) \in \mathfrak{V}^+ - \mathfrak{V}^+.$$ 
\item[(d)] Let $\mathfrak{u}, \mathfrak{v} \in \mathfrak{V}_{sa}$ be such that $\vert \mathfrak{u} - \mathfrak{v} \vert = \mathfrak{u} + \mathfrak{v}$. Then $\mathfrak{u}, \mathfrak{v} \in \mathfrak{V}^+$. For such a pair $\mathfrak{u}, \mathfrak{v} \in \mathfrak{V}^+$, we shall say that $\mathfrak{u}$ is \emph{orthogonal} to $\mathfrak{v}$ and denote it by $\mathfrak{u} \perp \mathfrak{v}$.
\item[(e)] For $\mathfrak{v} \in \mathfrak{V}_{sa},$ we write $\mathfrak{v}^+ := \frac{1}{2}(\vert \mathfrak{v} \vert + \mathfrak{v})$ and $\mathfrak{v}^- := \frac{1}{2}(\vert \mathfrak{v} \vert - \mathfrak{v})$. Then $\mathfrak{v}^+ \perp \mathfrak{v}^-, \mathfrak{v} = \mathfrak{v}^+ - \mathfrak{v}^-$ and $\vert \mathfrak{v} \vert = \mathfrak{v}^+ + \mathfrak{v}^-$. This decomposition is unique in the following sense: If $\mathfrak{v} = \mathfrak{v}_1 - \mathfrak{v}_2$ with $\mathfrak{v}_1 \perp \mathfrak{v}_2$, then $\mathfrak{v}_1 = \mathfrak{v}^+$ and $\mathfrak{v}_2 = \mathfrak{v}^-$. In other words, every element in $\mathfrak{V}_{sa}$ has a \emph{unique orthogonal decomposition} in $\mathfrak{V}^+$. 
\end{enumerate}
\end{remark}

\begin{definition}
An element $\mathfrak{a} \in \F$ is said to be a \emph{local unitary}, if $\mathfrak{a}^*\mathfrak{a}=\I_{o(\mathfrak{a})}=\mathfrak{a}\mathfrak{a}^*.$
\end{definition}

\begin{proposition}\label{19}
Let $(\mathfrak{V},\mathfrak{V}^{+},\vert \cdot \vert)$ be a non-degenerate absolutely ordered $\F$-bimodule and let $\mathfrak{v} \in \mathfrak{V}$. Then $\vert\mathfrak{a}^*\mathfrak{v}\mathfrak{a}\vert=\mathfrak{a}^*\vert\mathfrak{v}\vert\mathfrak{a}$ for every local unitary element $\mathfrak{a} \in \F$ with $o(\mathfrak{v}) \le o(\mathfrak{a})$.
\end{proposition}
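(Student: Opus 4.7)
The plan is to establish the equality by sandwiching both sides between two inequalities and invoking propriety of $\mathfrak{V}^{+}$. To treat both directions uniformly, I first isolate an auxiliary identity: for any $\mathfrak{p}\in\mathfrak{V}^{+}$ with $o(\mathfrak{p})\le o(\mathfrak{a})$, one has $\vert\mathfrak{p}\mathfrak{a}\vert=\mathfrak{a}^{*}\mathfrak{p}\mathfrak{a}$. This collapses the right-sided expressions produced by Definition \ref{18}(4) into ordinary conjugations that can be compared inside $\mathfrak{V}^{+}$.

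To prove the auxiliary identity, use $\mathfrak{a}\mathfrak{a}^{*}=\I_{o(\mathfrak{a})}$ together with $\I_{o(\mathfrak{a})}\mathfrak{p}=\mathfrak{p}$ (since $o(\mathfrak{p})\le o(\mathfrak{a})$) to factor $\mathfrak{p}\mathfrak{a}=\mathfrak{a}(\mathfrak{a}^{*}\mathfrak{p}\mathfrak{a})$. The element $\mathfrak{a}^{*}\mathfrak{p}\mathfrak{a}$ lies in $\mathfrak{V}^{+}$ by Definition \ref{w46}(2), so Definition \ref{18}(2) gives $\vert\mathfrak{a}^{*}\mathfrak{p}\mathfrak{a}\vert=\mathfrak{a}^{*}\mathfrak{p}\mathfrak{a}$. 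Because $\mathfrak{a}^{*}\mathfrak{a}(\mathfrak{a}^{*}\mathfrak{p}\mathfrak{a})=\I_{o(\mathfrak{a})}\mathfrak{a}^{*}\mathfrak{p}\mathfrak{a}=\mathfrak{a}^{*}\mathfrak{p}\mathfrak{a}$ and $\Vert\mathfrak{a}\Vert=\Vert\mathfrak{a}^{*}\Vert=1$, the chain of inequalities appearing in the proof of the preceding Proposition (which only uses $\mathfrak{a}^{*}\mathfrak{a}\mathfrak{v}=\mathfrak{v}$ rather than the literally stated $\mathfrak{a}^{*}\mathfrak{a}=\I_{o(\mathfrak{v})}$) yields $\vert\mathfrak{a}(\mathfrak{a}^{*}\mathfrak{p}\mathfrak{a})\vert=\vert\mathfrak{a}^{*}\mathfrak{p}\mathfrak{a}\vert$, hence $\vert\mathfrak{p}\mathfrak{a}\vert=\mathfrak{a}^{*}\mathfrak{p}\mathfrak{a}$.

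With the auxiliary identity in hand, the upper bound follows directly from Definition \ref{18}(4):
\[ \vert\mathfrak{a}^{*}\mathfrak{v}\mathfrak{a}\vert\le\Vert\mathfrak{a}^{*}\Vert\cdot\vert\,\vert\mathfrak{v}\vert\mathfrak{a}\vert=\vert\,\vert\mathfrak{v}\vert\mathfrak{a}\vert=\mathfrak{a}^{*}\vert\mathfrak{v}\vert\mathfrak{a}, \]
where $o(\vert\mathfrak{v}\vert)\le o(\mathfrak{v})\le o(\mathfrak{a})$ by Definition \ref{18}(1) legitimizes the auxiliary identity with $\mathfrak{p}=\vert\mathfrak{v}\vert$. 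For the reverse, write $\mathfrak{v}=\mathfrak{a}(\mathfrak{a}^{*}\mathfrak{v}\mathfrak{a})\mathfrak{a}^{*}$ by applying $\mathfrak{a}\mathfrak{a}^{*}=\I_{o(\mathfrak{a})}$ on both sides of $\mathfrak{v}$, then apply Definition \ref{18}(4) and the auxiliary identity (now with $\mathfrak{a}^{*}$, which is also a local unitary, in the role of $\mathfrak{a}$, and $\mathfrak{p}=\vert\mathfrak{a}^{*}\mathfrak{v}\mathfrak{a}\vert$) to obtain $\vert\mathfrak{v}\vert\le\mathfrak{a}\vert\mathfrak{a}^{*}\mathfrak{v}\mathfrak{a}\vert\mathfrak{a}^{*}$. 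Conjugating by $\mathfrak{a}^{*}$ on the left and $\mathfrak{a}$ on the right preserves $\mathfrak{V}^{+}$ by Definition \ref{w46}(2); using $\mathfrak{a}^{*}\mathfrak{a}=\I_{o(\mathfrak{a})}$ and $o(\vert\mathfrak{a}^{*}\mathfrak{v}\mathfrak{a}\vert)\le o(\mathfrak{a})$ to sandwich trivially gives $\mathfrak{a}^{*}\vert\mathfrak{v}\vert\mathfrak{a}\le\vert\mathfrak{a}^{*}\mathfrak{v}\mathfrak{a}\vert$. Propriety of $\mathfrak{V}^{+}$ (noted in the Remark following Definition \ref{18}) converts the two inequalities into the desired equality.

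The main obstacle is the asymmetry of Definition \ref{18}(4): the norm controls only the left factor while the right factor is trapped inside the absolute value. The auxiliary identity is precisely what restores the missing symmetry in this specific situation, because local unitarity lets us rewrite a right multiplication $\mathfrak{p}\mathfrak{a}$ as the left multiplication $\mathfrak{a}\cdot(\mathfrak{a}^{*}\mathfrak{p}\mathfrak{a})$ by a local unitary, which falls under the preceding Proposition. A minor technical subtlety is that the preceding Proposition is stated with hypothesis $\mathfrak{a}^{*}\mathfrak{a}=\I_{o(\mathfrak{v})}$, which fails when $o(\mathfrak{a})>o(\mathfrak{v})$; however, its proof in fact only uses $\mathfrak{a}^{*}\mathfrak{a}\mathfrak{v}=\mathfrak{v}$, which is available to us throughout by the order hypothesis $o(\mathfrak{v})\le o(\mathfrak{a})$.
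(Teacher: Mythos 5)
Your proposal is correct and follows essentially the same route as the paper: the upper bound $\vert\mathfrak{a}^*\mathfrak{v}\mathfrak{a}\vert\le\mathfrak{a}^*\vert\mathfrak{v}\vert\mathfrak{a}$ via Definition \ref{18}(4) and the factorization $\vert\mathfrak{v}\vert\mathfrak{a}=\mathfrak{a}(\mathfrak{a}^*\vert\mathfrak{v}\vert\mathfrak{a})$, the reverse inequality by running the same estimate with $\mathfrak{a}^*$ and conjugating back, and properness of $\mathfrak{V}^+$ to conclude. Your only departures are cosmetic — isolating the step $\vert\mathfrak{p}\mathfrak{a}\vert=\mathfrak{a}^*\mathfrak{p}\mathfrak{a}$ as an explicit auxiliary identity, and flagging that the preceding Proposition's hypothesis $\mathfrak{a}^*\mathfrak{a}=\I_{o(\mathfrak{v})}$ is used only through $\mathfrak{a}^*\mathfrak{a}\mathfrak{v}=\mathfrak{v}$ — both of which are sound and slightly more careful than the paper's inline treatment.
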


\begin{proof}
Let $\mathfrak{a} \in \F$ be a local unitary with $o(\mathfrak{v}) \le o(\mathfrak{a}).$ Since $o(\vert \mathfrak{v} \vert) \le o(\mathfrak{v})$, by Definition \ref{18}(4), we have

\begin{eqnarray*}
\vert\mathfrak{a}^*\mathfrak{v}\mathfrak{a}\vert &\le & \|\mathfrak{a}^*\|\vert \vert\mathfrak{v}\vert\mathfrak{a}\vert \\
&=&\vert \vert\mathfrak{v}\vert\mathfrak{a}\vert \\
&=& \vert\mathfrak{a}(\mathfrak{a}^*\vert\mathfrak{v}\vert\mathfrak{a})\vert \\
&\le & \|\mathfrak{a}\|\vert\mathfrak{a}^*\vert\mathfrak{v}\vert\mathfrak{a}\vert \\
&=& \mathfrak{a}^*\vert\mathfrak{v}\vert\mathfrak{a}.
\end{eqnarray*}
Similarly $\vert \mathfrak{a} \mathfrak{v} \mathfrak{a}^*\vert \le \mathfrak{a} \vert \mathfrak{v} \vert \mathfrak{a}^*$. 
Now $o(\mathfrak{v}) \le o(\mathfrak{a})$, thus replacing $\mathfrak{v}$ by $\mathfrak{a}^*\mathfrak{v}\mathfrak{a},$ we get $\vert\mathfrak{v}\vert\le \mathfrak{a}\vert\mathfrak{a}^*\mathfrak{v}\mathfrak{a}\vert\mathfrak{a}^*$ so that $\mathfrak{a}^*\vert\mathfrak{v}\vert\mathfrak{a}\le\vert\mathfrak{a}^*\mathfrak{v}\mathfrak{a}\vert.$ As $\mathfrak{V}^{+}$ is proper, we conclude that $\vert\mathfrak{a}^*\mathfrak{v}\mathfrak{a}\vert=\mathfrak{a}^*\vert\mathfrak{v}\vert\mathfrak{a}.$
\end{proof}

Now, in next two Theorems \ref{26} and \ref{z}, we generalize Theorem \ref{w16} in the context of absolutely matrix ordered space.   
 
\begin{theorem}\label{26}
Let $(V,\lbrace{M_{n}(V)}^{+}\rbrace,\lbrace {\vert \cdot \vert}_{n}\rbrace)$ be an absolutely matrix ordered space and let $(\mathfrak{V},\mathfrak{V}^{+})$ be the matricial inductive limit of $(V,\lbrace{M_{n}(V)}^{+}\rbrace).$ 
For $\mathfrak{v} \in \mathfrak{V}$, we define $\vert \mathfrak{v} \vert: = T_{o(\mathfrak{v})}(\vert T_{o(\mathfrak{v})}^{-1}(\mathfrak{v})\vert)$ and consider the map $\vert \cdot \vert : \mathfrak{V} \to \mathfrak{V}^+$ given by $\mathfrak{v} \longmapsto \vert \mathfrak{v} \vert$ for all $\mathfrak{v} \in \mathfrak{V}$. Then 
$(\mathfrak{V},\mathfrak{V}^{+},\vert \cdot \vert)$ is a non-degenerate absolutely ordered $\F$-bimodule.
\end{theorem}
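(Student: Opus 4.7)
The plan is to verify each of the seven clauses of Definition \ref{18} by transporting them, via the injective homomorphisms $T_n$, to the matrix-level absolute value on $M_n(V)$. Before addressing the axioms themselves, I would first check that although the formula $|\mathfrak{v}| := T_{o(\mathfrak{v})}(|T_{o(\mathfrak{v})}^{-1}(\mathfrak{v})|)$ singles out $n = o(\mathfrak{v})$, the resulting element coincides with $T_n(|T_n^{-1}(\mathfrak{v})|_n)$ for every $n \geq o(\mathfrak{v})$. This follows from $T_n^{-1}(\mathfrak{v}) = T_{o(\mathfrak{v})}^{-1}(\mathfrak{v}) \oplus 0$ together with Definition \ref{152}(3) and the easy fact $|0|_m = 0$ (deducible from Definition \ref{152}(2) by taking $\alpha = 0$).

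Axioms (1), (2), and (3) are then essentially immediate. For (1), $|\mathfrak{v}| \in T_{o(\mathfrak{v})}(M_{o(\mathfrak{v})}(V)) = \I_{o(\mathfrak{v})} \mathfrak{V} \I_{o(\mathfrak{v})}$. For (2), $|v|_n = v$ whenever $v \in M_n(V)^+$, as part of the absolutely ordered space structure on $M_n(V)_{sa}$ guaranteed by Definition \ref{152}(1). For (3), the notation introduced before Definition \ref{18} identifies $(|\mathfrak{v}^*|, |\mathfrak{v}|)_{o(\mathfrak{v})}^+ + sa_{o(\mathfrak{v})}(\mathfrak{v})$ under $T_{2 o(\mathfrak{v})}^{-1}$ with the matrix $\begin{bmatrix} |v^*| & v \\ v^* & |v| \end{bmatrix}$, which is positive by Proposition \ref{151}(3). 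For axiom (4), choose $n$ large enough that $\mathfrak{v} \in T_n(M_n(V))$ and both $\mathfrak{a}, \mathfrak{b}$ have all nonzero entries in $\{1,\dots,n\}^2$; the inequality is then Definition \ref{152}(2) read at level $n$.

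Axiom (5) is the main obstacle. I would first establish a matrix-level conjugation formula: for any unitary $U \in M_n$ and $w \in M_n(V)$,
\[ |U w U^*|_n = U |w|_n U^*. \]
The inequality $\leq$ follows from two successive applications of Definition \ref{152}(2) (extracting $\|U\| = 1$ and then $\|U^*\| = 1$ after rewriting $|w|_n U^* = U^*(U |w|_n U^*)$), combined with the identity $|w'|_n = w'$ applied to the positive element $w' = U|w|_n U^*$. The reverse inequality is obtained by substituting $w \mapsto U^* w U$ in this first direction and conjugating. With the conjugation formula in hand, given $\mathfrak{F}$-independent $\mathfrak{u}, \mathfrak{v}$ with supports in disjoint $I, J \subset \mathbb{N}$, I would choose $n$ with $I \cup J \subset \{1,\dots,n\}$ and let $P \in M_n$ be a permutation matrix sending $I$ then $J$ to consecutive initial segments. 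Then $P^*(T_n^{-1}(\mathfrak{u}) + T_n^{-1}(\mathfrak{v}))P$ decomposes as a block-direct sum $u' \oplus v' \oplus 0$, and Definition \ref{152}(3) combined with the conjugation formula removes $P$ and yields $|\mathfrak{u}+\mathfrak{v}| = |\mathfrak{u}| + |\mathfrak{v}|$.

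Finally, axioms (6) and (7) involve only finitely many elements $\mathfrak{u}, \mathfrak{v}, \mathfrak{w} \in \mathfrak{V}^+$, so I can choose $n$ so large that all of them lie in $T_n(M_n(V)^+)$ and then invoke the analogous properties of the absolutely ordered space $(M_n(V)_{sa}, M_n(V)^+, |\cdot|_n)$ guaranteed by Definition \ref{152}(1); since $T_n$ is an injective positive homomorphism identifying $|\cdot|_n$ with the restriction of $|\cdot|$ to $\I_n \mathfrak{V} \I_n$, the translation is immediate. The crux of the whole argument is axiom (5): deriving the matrix conjugation formula from the one-sided inequality of Definition \ref{152}(2) is the main technical step, after which the permutation trick reduces the $\mathfrak{F}$-independent case to the direct sum formula of Definition \ref{152}(3).
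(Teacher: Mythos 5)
Your proposal is correct and follows essentially the same route as the paper: the paper also treats axioms (1)--(4), (6), (7) as routine transports via $T_n$, proves the conjugation identity $\vert\mathfrak{a}^*\mathfrak{v}\mathfrak{a}\vert=\mathfrak{a}^*\vert\mathfrak{v}\vert\mathfrak{a}$ for local unitaries (its Proposition \ref{19}, by exactly your two-applications-of-Definition-\ref{152}(2)-plus-properness argument), and then handles axiom (5) by conjugating with an explicit permutation-type local unitary $\mathfrak{d}$ that moves the disjoint supports $I$ and $J$ to consecutive initial segments so that Definition \ref{152}(3) applies. The only cosmetic difference is that you state the conjugation formula at the matrix level while the paper states it at the bimodule level.
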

\begin{proof}
It is routine to verify all the conditions in definition \ref{18} except (5).

Now, let $\mathfrak{u}$ and $\mathfrak{v}$ be $\F$-independent elements in $(\mathfrak{V},\mathfrak{V}^{+}).$ There exist $I,J\subset\mathbb{N}$ with $I\cap J=\phi$ such that $\mathfrak{r} \mathfrak{u} \mathfrak{r} = \mathfrak{u}$ and $\mathfrak{s} \mathfrak{v} \mathfrak{s} = \mathfrak{v}$ where $\mathfrak{r}=\displaystyle\sum_{i\in I}\mathfrak{e}_{i,i}$ and $\mathfrak{s}=\displaystyle\sum_{j\in J}\mathfrak{e}_{j,j}.$ 
Let 
$$I=\lbrace i_1<i_2< \cdots <i_{k-1}<i_{k}\rbrace,$$ 
$$J=\lbrace j_1<j_2< \cdots <j_{l-1}<j_{l}\rbrace$$ 
and put $n=\max\lbrace i_k,j_l\rbrace.$ Then $I\cup J\subset \lbrace 1,2,3,\cdots,n\rbrace$. Let us write 
$$G :=\lbrace 1,2,3,\cdots,n\rbrace \backslash (I\cup J)=\lbrace r_1,r_2,r_3,\cdots,r_{m-1},r_{m}\rbrace$$ 
so that $k+l+m=n.$ Put $$\mathfrak{a}=\sum_{t=1}^{k}\mathfrak{e}_{i_t,t},~\mathfrak{b}=\sum_{t=1}^{l}\mathfrak{e}_{j_t,t+k},\mathfrak{c}=\sum_{t=1}^{m}\mathfrak{e}_{r_t,t+k+l}$$ and $\mathfrak{d}=\mathfrak{a}+\mathfrak{b}+\mathfrak{c}.$ It is routine to verify that $\mathfrak{a}^*\mathfrak{a}=\I_{k},\mathfrak{a}\mathfrak{a}^*=\mathfrak{r},\mathfrak{b}^*\mathfrak{b}=0_{k}\oplus \I_{l},\mathfrak{b}\mathfrak{b}^*=\mathfrak{s},\mathfrak{c}^*\mathfrak{c}=0_{k+l}\oplus \I_{m}$, $\mathfrak{a}^*\mathfrak{b} =\mathfrak{a}\mathfrak{b}^* =\mathfrak{a}^* \mathfrak{c} =\mathfrak{a} \mathfrak{c}^* =\mathfrak{b}^* \mathfrak{c} =\mathfrak{b}\mathfrak{c}^*=\mathfrak{0}$ and $\mathfrak{d}^*\mathfrak{d}=\I_{n}=\mathfrak{d}\mathfrak{d}^*.$ Put $\mathfrak{u}_1=\mathfrak{a}^*\mathfrak{u}\mathfrak{a}$ and $\mathfrak{v}_1=\mathfrak{b}^*\mathfrak{v}\mathfrak{b}$. Then $\I_k \mathfrak{u}_1 \I_k=\mathfrak{u}_1$ and $(0_k\oplus \I_l)\mathfrak{v}_1 (0_k\oplus \I_l)=\mathfrak{v}_1.$ 

Let $T_{k}^{-1}(\mathfrak{u}_1) = u$ and $T_{k+l}^{-1}(\mathfrak{v}_1) = v_1$. Then $u \in M_k(V)$ and $v_1 \in M_{k+l}(V)$. Also then $$v_1 = T_{k+l}^{-1}((0_k \oplus \I_l) \mathfrak{v}_1 (0_k \oplus \I_l)) = (0_k \oplus I_l) v_1 (0_k \oplus I_l)$$ so that $v_1 = 0_k \oplus v$ for some $v \in M_l(V)$. Now $T_{k+l}^{-1}(\mathfrak{u}_1) = u \oplus 0_l$ and $T_{k+l}^{-1}(\mathfrak{v}_1) = 0_k \oplus v$ so that $T_{k+l}^{-1}(\mathfrak{u}_1 + \mathfrak{v}_1) = u \oplus v$. Thus 

\begin{eqnarray*}
\vert \mathfrak{u}_1 + \mathfrak{v}_1 \vert &=& T_{k+l}(\vert u \oplus v \vert_{k+l}) = T_{k+l}(\vert u \vert_k \oplus \vert v \vert_l) \\
&=& T_{k+l}(\vert u \vert_k \oplus 0_k) + T_{k+l}(0_k \oplus \vert v \vert_l) \\
&=& T_k(\vert u \vert_k) + T_{k+l}(\vert v_1 \vert_{k+l}) \\
&=& \vert \mathfrak{u}_1 \vert + \vert \mathfrak{v}_1 \vert. 
\end{eqnarray*}

Since $\mathfrak{r}\mathfrak{d}=\mathfrak{a}\mathfrak{a}^*\mathfrak{a},$ we get

\begin{eqnarray*}
\mathfrak{d}^* \mathfrak{u} \mathfrak{d} &=& \mathfrak{d}^*(\mathfrak{r}\mathfrak{u} \mathfrak{r})\mathfrak{d} \\
&=& (\mathfrak{d}^*\mathfrak{r})\mathfrak{u} (\mathfrak{r}\mathfrak{d})\\
&=& (\mathfrak{a}^*\mathfrak{a}\mathfrak{a}^*)\mathfrak{u}(\mathfrak{a}\mathfrak{a}^*\mathfrak{a})\\
&=& \mathfrak{a}^*(\mathfrak{a}\mathfrak{a}^*)\mathfrak{u} (\mathfrak{a}\mathfrak{a}^*)\mathfrak{a} \\
&=& \mathfrak{a}^* (\mathfrak{r}\mathfrak{u} \mathfrak{r}) \mathfrak{a} \\
&=& \mathfrak{a}^* \mathfrak{u} \mathfrak{a} \\
&=& \mathfrak{u}_1.
\end{eqnarray*} 

Similarly $\mathfrak{d}^* \mathfrak{v}\mathfrak{d} =\mathfrak{v}_1.$ Thus by Proposition \ref{19}, we get 
\begin{eqnarray*}
	\mathfrak{d}^*\vert \mathfrak{u}+\mathfrak{v} \vert \mathfrak{d} &=&\vert \mathfrak{d}^*(\mathfrak{u}+\mathfrak{v})\mathfrak{d} \vert=\vert \mathfrak{u}_1+\mathfrak{v}_1\vert \\
	&=&\vert \mathfrak{u}_1\vert+\vert \mathfrak{v}_1 \vert=\mathfrak{d}^*\vert \mathfrak{u} \vert\mathfrak{d} +\mathfrak{d}^*\vert \mathfrak{v} \vert\mathfrak{d} \\ 
	&=&\mathfrak{d}^*(\vert \mathfrak{u} \vert +\vert \mathfrak{v} \vert)\mathfrak{d}.
\end{eqnarray*}
Hence $\vert \mathfrak{u}+\mathfrak{v} \vert=\vert \mathfrak{u} \vert+\vert \mathfrak{v} \vert.$
\end{proof}

Next, we prove the converse of Theorem \ref{26}.

\begin{theorem}\label{z}
Let $(\mathfrak{V},\mathfrak{V}^{+},\vert \cdot \vert)$ be a non-degenerate absolutely ordered $\F$-bimodule. Let $(V,\lbrace M_n(V)^+ \rbrace)$ be the matrix ordered space whose matricial inductive limit is $(\mathfrak{V},\mathfrak{V}^+)$ as in Theorem \ref{w16}. For each $v \in M_n(V),n \in \mathbb{N}$ we define $\vert v \vert_n : = T_n^{-1}(\vert T_n(v) \vert)$ and consider the map ${\vert\cdot\vert}_{n}: M_{n}(V) \to M_{n}(V)^{+}$ given by $v \longmapsto \vert v\vert_{n}$. Then $(V,\lbrace{M_{n}(V)}^{+}\rbrace,\lbrace {\vert \cdot \vert}_{n}\rbrace)$ is an absolutely matrix ordered space with $(\mathfrak{V}, \mathfrak{V}^+, \vert \cdot \vert)$ as its matricial inductive limit.
\end{theorem}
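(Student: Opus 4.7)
The overall strategy is to transfer the absolute-value structure from $\mathfrak{V}$ down to each $M_n(V)$ through the injective order isomorphism $T_n$ of Theorem \ref{w16}, and then verify the axioms of Definition \ref{152} by chasing through Definition \ref{18}. I would begin by checking that $\vert v\vert_n$ is well-defined in $M_n(V)^+$: for $v\in M_n(V)$, the containment $T_n(v)\in \I_n\mathfrak{V}\I_n$ forces $o(T_n(v))\le n$, so by Definition \ref{18}(1) also $o(\vert T_n(v)\vert)\le n$, whence $\vert T_n(v)\vert\in \I_n\mathfrak{V}^+\I_n = T_n(M_n(V)^+)$.

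The engine of the whole proof is the intertwining identity $T_n(\alpha v\beta)=\sigma_n(\alpha)T_n(v)\sigma_n(\beta)$ for $\alpha,\beta\in M_n$ and $v\in M_n(V)$, proved by a direct matrix-unit computation together with $\Vert\sigma_n(\alpha)\Vert=\Vert\alpha\Vert$. With this identity in hand, Definition \ref{152}(2) becomes a one-line consequence of Definition \ref{18}(4): one computes $\vert T_n(\alpha v\beta)\vert=\vert\sigma_n(\alpha)T_n(v)\sigma_n(\beta)\vert \le \Vert\alpha\Vert\,\vert T_n(\vert v\vert_n\beta)\vert=\Vert\alpha\Vert\,T_n(\vert\vert v\vert_n\beta\vert_n)$ and then applies $T_n^{-1}$. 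Similarly, the axioms that make $(M_n(V)_{sa},M_n(V)^+,\vert\cdot\vert_n)$ an absolutely ordered space transfer one-by-one: \ref{18}(2) gives $\vert v\vert_n=v$ for $v\in M_n(V)^+$; the self-adjoint specialization of \ref{18}(3) (using $\vert\mathfrak{v}^*\vert=\vert\mathfrak{v}\vert$, which yields $\begin{bmatrix}\vert v\vert_n&v\\v&\vert v\vert_n\end{bmatrix}\in M_{2n}(V)^+$, and a subsequent conjugation by $\frac{1}{\sqrt{2}}\begin{bmatrix}I_n&I_n\\I_n&-I_n\end{bmatrix}$ to diagonalize) gives $\vert v\vert_n\pm v\in M_n(V)^+$ for $v\in M_n(V)_{sa}$; axioms (6)--(7) carry over verbatim since $T_n$ is an injective order and $\vert\cdot\vert$-preserving map onto $\I_n\mathfrak{V}\I_n$.

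The main technical step is Definition \ref{152}(3): $\vert v\oplus w\vert_{m+r}=\vert v\vert_m\oplus\vert w\vert_r$ for $v\in M_m(V)$ and $w\in M_r(V)$. I would decompose $T_{m+r}(v\oplus w) = T_m(v) + \mathfrak{w}'$ where $\mathfrak{w}':=\sum_{k,k'}\mathfrak{e}_{k+m,1}w_{kk'}\mathfrak{e}_{1,k'+m}$, and observe that $T_m(v)$ and $\mathfrak{w}'$ are $\F$-independent with supports in the disjoint index sets $\{1,\dots,m\}$ and $\{m+1,\dots,m+r\}$. Definition \ref{18}(5) then gives $\vert T_{m+r}(v\oplus w)\vert=\vert T_m(v)\vert+\vert\mathfrak{w}'\vert$. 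To identify $\vert\mathfrak{w}'\vert$, I would introduce the block-permutation local unitary $\mathfrak{u}_0:=\sum_{l=1}^{m}\mathfrak{e}_{l,l+r}+\sum_{l=1}^{r}\mathfrak{e}_{l+m,l}$, verify $\mathfrak{u}_0^*\mathfrak{w}'\mathfrak{u}_0=T_r(w)$, and invoke Proposition \ref{19} to conclude $\vert\mathfrak{w}'\vert=\mathfrak{u}_0 T_r(\vert w\vert_r)\mathfrak{u}_0^*=T_{m+r}(0_m\oplus\vert w\vert_r)$. Applying $T_{m+r}^{-1}$ to $\vert T_m(v)\vert+\vert\mathfrak{w}'\vert$ then yields the required block-diagonal identity. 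A rectangular extension $\vert\cdot\vert_{m,n}:M_{m,n}(V)\to M_n(V)^+$ can be obtained by embedding $v\in M_{m,n}(V)$ as the off-diagonal block of $\begin{bmatrix}0&v\\v^*&0\end{bmatrix}\in M_{m+n}(V)_{sa}$ and reading off the lower-right $n\times n$ block of its absolute value, with the rectangular forms of (2) and (3) again reduced to the square case via the same $\F$-independence and local-unitary arguments.

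Finally, the assertion that $(\mathfrak{V},\mathfrak{V}^+,\vert\cdot\vert)$ is the matricial inductive limit of the resulting absolutely matrix ordered space is essentially automatic: Theorem \ref{w16} already matches the ordered $\F$-bimodule structures, and the defining formula $\vert v\vert_n=T_n^{-1}(\vert T_n(v)\vert)$ is exactly inverse to the construction in Theorem \ref{26}, so the absolute value recovered via that theorem agrees with the original $\vert\cdot\vert$ on each $T_n(M_n(V))$ and hence on all of $\mathfrak{V}=\bigcup_n T_n(M_n(V))$. The principal obstacle I anticipate is the clean identification $\vert\mathfrak{w}'\vert=T_{m+r}(0_m\oplus\vert w\vert_r)$ in the block-diagonal step; once the correct permutation local unitary is in place, the rest is bookkeeping.
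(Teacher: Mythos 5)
Your proposal is correct and follows essentially the same route as the paper: the heart of both arguments is the identity $\vert u\oplus w\vert_{m+r}=\vert u\vert_m\oplus\vert w\vert_r$, obtained by splitting $T_{m+r}(u\oplus w)$ into two $\F$-independent pieces (Definition \ref{18}(5)) and identifying the absolute value of the shifted block by conjugating with a block-permutation local unitary via Proposition \ref{19} --- your $\mathfrak{u}_0$ is exactly the paper's $\sigma_{m+n}(a)$ with $a=\begin{bmatrix}0&I_m\\I_n&0\end{bmatrix}$. The remaining verifications, which the paper dismisses as routine, you carry out by the same transfer-through-$T_n$ mechanism, so there is nothing substantive to add.
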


\begin{proof}
Let $u \in M_m(V)$ and $v \in M_n(V)$. Put $v_1 = 0_m \oplus v$ and $a = \begin{bmatrix} 0 & I_m \\ I_n & 0\end{bmatrix}$. Then $a$ is unitary in $M_{m+n}$ with $a^* v_1 a = v \oplus 0_m$ so that $\sigma_{m+n}(a)$ is local unitary in $\mathfrak{F}$ with $\sigma_{m+n}(a)^* T_{m+n}(v_1) \sigma_{m+n}(a) = T_n(v)$. By Proposition \ref{19}, we get that $\sigma_{m+n}(a)^* \vert T_{m+n}(v_1) \vert \sigma_{m+n}(a) = \vert T_n(v) \vert $. Thus 

\begin{eqnarray*}
a^* \vert v_1 \vert_{m+n} a  &=& a^* T_{m+n}^{-1}(\vert T_{m+n}(v_1) \vert) a\\
&=& T_{m+n}^{-1}(\sigma_{m+n}(a)^* \vert T_{m+n}(v_1) \vert \sigma_{m+n}(a)) \\
&=& T_{m+n}^{-1}(\vert T_n(v) \vert) \\
&=& \vert v \vert_n \oplus 0_m
\end{eqnarray*}

so that $\vert v_1 \vert_{m+n} = 0_m \oplus \vert v \vert_n$. Now note that $T_m(u)$ and $T_{m+n}(v_1)$ are $\F$-independent in $\mathfrak{V}$. Thus $\vert T_m(u)+T_{m+n}(v_1) \vert = \vert T_m(u) \vert + \vert T_{m+n}(v_1) \vert$ so that 

\begin{eqnarray*}
\vert u \oplus v \vert_{m+n} &=& \vert (u \oplus 0_n) + (0_m \oplus v) \vert_{m+n} \\
&=& T_{m+n}^{-1}(\vert T_m(u)+T_{m+n}(v_1) \vert) \\
&=& T_{m+n}^{-1}(\vert T_m(u) \vert + \vert T_{m+n}(v_1) \vert) \\
&=& (\vert u \vert_m \oplus 0_n) + \vert v_1 \vert_{m+n} \\
&=& (\vert u \vert_m \oplus 0_n) + (0_m \oplus \vert v \vert_n) \\
&=& \vert u \vert_m \oplus \vert v \vert_n. 
\end{eqnarray*}
Now, it is routine to prove that $(V,\lbrace{M_{n}(V)}^{+}\rbrace,\lbrace {\vert \cdot \vert}_{n}\rbrace)$ is an absolutely matrix ordered space.
\end{proof}

\begin{definition}
Let $(\mathfrak{V},\mathfrak{V}^{+})$ be a non-degenerate $\F$-bimodule. An element $\mathfrak{e} \in \mathfrak{V}^{+}$ is said to be a local order unit for $(\mathfrak{V},\mathfrak{V}^{+})$, if it satisfies the following two conditions:
\end{definition}

\begin{enumerate}
\item[(i)] $\I_1 \mathfrak{e} \I_1 = \mathfrak{e}.$
\item[(ii)] For each $\mathfrak{v} \in \I_1 \mathfrak{V} \I_1,$ there exists $k > 0$ such that $${(k \mathfrak{e},k \mathfrak{e})_1}^{+}+sa_1(\mathfrak{v})\in \mathfrak{V}^{+}.$$
\end{enumerate}

\textbf{Note:-} Let $\mathfrak{V}$ be a non-degenerate $\F$-bimodule and let $\mathfrak{v} \in \mathfrak{V}$ with $\I_1 \mathfrak{v} \I_1 = \mathfrak{v}.$ For each $n \in \mathbb{N},$ we write $\mathfrak{v}^n = \displaystyle \sum_{i=1}^n \mathfrak{e}_{i,1} \mathfrak{v} \mathfrak{e}_{1,i}$.

\begin{proposition} \label{g}
Let $(\mathfrak{V},\mathfrak{V}^{+})$ be a non-degenerate $\F$-bimodule with a local order unit $\mathfrak{e}.$ Also let $(V, \lbrace M_n(V)^+\rbrace)$ be a matrix ordered space whose matricial inductive limit is $(\mathfrak{V},\mathfrak{V}^{+})$. Then
\begin{enumerate}
\item[(1)] For $\mathfrak{v} \in \I_n \mathfrak{V} \I_n$ and $k > 0$, we have ${(k \mathfrak{e}^n,k \mathfrak{e}^n)_n}^{+}+sa_n(\mathfrak{v})\in \mathfrak{V}^{+}$ if and only if ${(k \mathfrak{e}^n,k \mathfrak{e}^n)_n}^{+}-sa_n(\mathfrak{v})\in \mathfrak{V}^{+}.$ 
\item[(2)] $T^{-1}(\mathfrak{e})$ is an order unit for $V.$ 
\item[(3)] For each $\mathfrak{v} \in \I_n \mathfrak{V} \I_n,$ there exists $k > 0$ such that ${(k \mathfrak{e}^n,k \mathfrak{e}^n)_n}^{+}+sa_n(\mathfrak{v})\in \mathfrak{V}^{+}.$ 
\end{enumerate}
\end{proposition}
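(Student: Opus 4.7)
My plan is to reduce all three parts to matrix-level positivity questions in $M_{2n}(V)^+$, then dispatch them in the order (1)$\to$(2)$\to$(3). Setting $e_0 := T_1^{-1}(\mathfrak{e}) \in V$, a short check gives $\mathfrak{e}^n = T_n(e_0\oplus\cdots\oplus e_0)$, and the bookkeeping identity recorded just before Definition~\ref{18} yields
\[ T_{2n}^{-1}\!\bigl((k\mathfrak{e}^n,k\mathfrak{e}^n)_n^{+} \pm sa_n(\mathfrak{v})\bigr) \;=\; \begin{bmatrix} ke_0^n & \pm v \\ \pm v^* & ke_0^n \end{bmatrix}, \qquad v := T_n^{-1}(\mathfrak{v}). \]
Throughout I will move freely between the $\F$-bimodule picture and the matrix picture via $T_{2n}$.

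For (1) I would conjugate this block matrix by the self-adjoint unitary $u := I_n \oplus (-I_n) \in M_{2n}$, which reverses the sign of the two off-diagonal blocks while preserving the two diagonal blocks; condition~(b) in the definition of a matrix ordered space then delivers both implications at once.

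For (2) I would invoke condition~(ii) of the local order unit hypothesis with $\mathfrak{v} := T_1(v)$ for $v \in V_{sa}$, which produces $k > 0$ with $(k\mathfrak{e},k\mathfrak{e})_1^{+} + sa_1(T_1(v)) \in \mathfrak{V}^{+}$. Part~(1) upgrades this to both signs simultaneously, giving $\begin{bmatrix} ke_0 & \pm v \\ \pm v & ke_0 \end{bmatrix} \in M_2(V)^{+}$. Compressing by the scalar column vectors $\alpha_{\pm} := \begin{bmatrix} 1 \\ \pm 1 \end{bmatrix} \in M_{2,1}$ produces $2(ke_0 \pm v) \in V^{+}$, and since $e_0 \in V^{+}$ follows from $\mathfrak{e} \in \I_1 \mathfrak{V}^{+} \I_1$, this identifies $e_0$ as an order unit for $V_{sa}$.

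For (3) I would feed (2) into \cite[Lemma~2.6]{KV97} (recalled as part~(2) of the proposition in Section~2.2): $e_0^{2n}$ is then an order unit for $M_{2n}(V)_{sa}$. Hence the self-adjoint element $\begin{bmatrix} 0 & v \\ v^* & 0 \end{bmatrix}$ admits some $k > 0$ with $ke_0^{2n} + \begin{bmatrix} 0 & v \\ v^* & 0 \end{bmatrix} \in M_{2n}(V)^{+}$, which under $T_{2n}$ is precisely $(k\mathfrak{e}^n,k\mathfrak{e}^n)_n^{+} + sa_n(\mathfrak{v}) \in \mathfrak{V}^{+}$. The whole proposition is essentially bookkeeping; the one genuine calculation is the sign-flipping conjugation in~(1), and I anticipate no real obstacle beyond keeping the indices straight.
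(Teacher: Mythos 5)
Your proposal is correct and follows essentially the same route as the paper: the paper's part (1) conjugates by the local unitary $\mathfrak{a} = \I_n - \J_n^*\I_n\J_n$, which is exactly your $I_n \oplus (-I_n)$ viewed in $\F$, and its parts (2) and (3) likewise pass through the $2\times 2$ (resp.\ $2n\times 2n$) block-matrix positivity criterion and the fact that an order unit for $V$ yields one for $M_{2n}(V)_{sa}$. Your compression by $\begin{bmatrix} 1 \\ \pm 1 \end{bmatrix}$ in step (2) merely makes explicit a step the paper leaves implicit.
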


\begin{proof}
\begin{enumerate}
\item[(1)] Put $\mathfrak{a} = \I_n - \J_n^* \I_n \J_n$. Then $\mathfrak{a}$ is a local unitary with ${(k \mathfrak{e}^n,k \mathfrak{e}^n)_n}^{+}-sa_n(\mathfrak{v})=\mathfrak{a}^* ({(k \mathfrak{e}^n,k \mathfrak{e}^n)_n}^{+}+sa_n(\mathfrak{v}))\mathfrak{a}.$ Thus by Definition \ref{w46}(2), we have ${(k \mathfrak{e}^n,k \mathfrak{e}^n)_n}^{+}+sa_n(\mathfrak{v})\in \mathfrak{V}^{+}$ if and only if ${(k \mathfrak{e}^n,k \mathfrak{e}^n)_n}^{+}-sa_n(\mathfrak{v})\in \mathfrak{V}^{+}.$ 

\item[(2)] Since $\I_1 \mathfrak{e} \I_1 = \mathfrak{e}$, we get that $T^{-1}(\mathfrak{e}) \in V^+$. Next, let $v\in V$. Then $T(v) \in \I_1 \mathfrak{V} \I_1$. Now by definition and by (1), we have ${(k \mathfrak{e},k \mathfrak{e})_1}^{+}\pm sa_1(T(v)) \in \mathfrak{V}^+$. Note that $o({(k \mathfrak{e},k \mathfrak{e})_1}^{+}\pm sa_1(T(v))) \le 2$. Thus $\begin{bmatrix}k T^{-1}(\mathfrak{e}) & \pm v \\ \pm v^* & k T^{-1}(\mathfrak{e})\end{bmatrix} = T_{2}^{-1}({(k \mathfrak{e},k \mathfrak{e})_1}^{+}\pm sa_1(T(v)) \in M_2(V)^+$ so that $T^{-1}(\mathfrak{e})$ is an order unit for $V.$  

\item[(3)] Let $\mathfrak{v} \in \I_n \mathfrak{V} \I_n$ and put $e=T^{-1}(\mathfrak{e})$. Since $e$ is order unit for $V$ and $T_n^{-1}(\mathfrak{v}) \in M_n(V)$, we get that $\begin{bmatrix}k e^n & T_n^{-1}(\mathfrak{v}) \\ {T_n^{-1}(\mathfrak{v})}^* & ke^n\end{bmatrix} \in M_{2n}(V)^+$. Then ${(k\mathfrak{e}^n,k\mathfrak{e}^n)_n}^{+}+sa_n(\mathfrak{v}) = T_{2n}\left(\begin{bmatrix}k e^n & T_n^{-1}(\mathfrak{v}) \\ {T_n^{-1}(\mathfrak{v})}^* & ke^n\end{bmatrix}\right)  \in \mathfrak{V}^{+}.$   
\end{enumerate}
\end{proof}

\begin{definition}
A non-degenerate ordered $\F$-bimodule $(\mathfrak{V},\mathfrak{V}^{+})$ with a local order unit $\mathfrak{e}$ is said to be a local $\F$-order unit bimodule, if $\mathfrak{V}^+$ is proper and Archimedean. We denote it by $(\mathfrak{V},\mathfrak{V}^+,\mathfrak{e}).$
\end{definition}

\begin{remark}
Let $(V, \lbrace M_n(V)^+ \rbrace,e)$ be a matrix order unit space and let $(\mathfrak{V},\mathfrak{V}^+)$ be the matricial inductive limit of $(V, \lbrace M_n(V)^+ \rbrace)$ as in Theorem \ref{w16}. Then $(\mathfrak{V},\mathfrak{V}^{+},T(e))$ is a Local $\F$-order unit bimodule.
\end{remark}

\begin{proposition}\label{j}
Let $(\mathfrak{V},\mathfrak{V}^+,\mathfrak{e})$ be a local $\F$-order unit bimodule. For each $\mathfrak{v} \in \mathfrak{V},$ put $$\Vert \mathfrak{v} \Vert = \inf \lbrace k > 0: (k \mathfrak{e}^{o(\mathfrak{v})},k \mathfrak{e}^{o(\mathfrak{v})})_{o(\mathfrak{v})}^+ + sa_{o(\mathfrak{v})}(\mathfrak{v}) \in \mathfrak{V}^+\rbrace.$$ Then $(\mathfrak{V},\mathfrak{V}^+,\mathfrak{e})$ with $\Vert \cdot \Vert$ is a $\F$-bimodule normed space. Moreover, for each $\mathfrak{v} \in \mathfrak{V}$, we have $$\Vert \mathfrak{v} \Vert= \inf \lbrace k > 0: (k \mathfrak{e}^n,k \mathfrak{e}^n)_n^+ + sa_n(\mathfrak{v}) \in \mathfrak{V}^+~\textrm{for some}~n \in \mathbb{N}~\textrm{with}~\I_n \mathfrak{v} \I_n = \mathfrak{v} \rbrace.$$
\end{proposition}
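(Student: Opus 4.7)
The plan is to work systematically via the identification $T_n : M_n(V) \to \I_n \mathfrak{V} \I_n$ provided by Theorem~\ref{w16}. Under this identification (with $e = T^{-1}(\mathfrak{e})$), the positivity condition $(k\mathfrak{e}^n, k\mathfrak{e}^n)_n^+ + sa_n(\mathfrak{v}) \in \mathfrak{V}^+$ becomes the $2\times 2$ block matrix condition
\[
\begin{bmatrix} ke^n & T_n^{-1}(\mathfrak{v}) \\ T_n^{-1}(\mathfrak{v})^* & ke^n \end{bmatrix} \in M_{2n}(V)^+.
\]
Finiteness of the defining infimum is given by Proposition~\ref{g}(3), so $\Vert \mathfrak{v}\Vert \in [0,\infty)$. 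I would first prove the ``moreover'' equivalence, then derive the norm axioms, then the $\F$-bimodule norm inequality.

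The moreover claim is the main enabling step---and I expect it to be the main obstacle---because it grants the freedom, essential for the triangle and bimodule inequalities, to verify positivity at any sufficiently large $n$ rather than only at $n = o(\mathfrak{v})$. For $n \geq o(\mathfrak{v})$ we have $T_n^{-1}(\mathfrak{v}) = T_{o(\mathfrak{v})}^{-1}(\mathfrak{v}) \oplus 0_{n-o(\mathfrak{v})}$ and $e^n = e^{o(\mathfrak{v})} \oplus e^{n-o(\mathfrak{v})}$. The forward direction is obtained by compressing the $M_{2n}(V)^+$ witness by an isometric embedding that extracts the first $o(\mathfrak{v})$ coordinates in each of the two blocks. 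The reverse direction combines the $2o(\mathfrak{v}) \times 2o(\mathfrak{v})$ witness with the positive element $\text{diag}(ke^{n-o(\mathfrak{v})}, ke^{n-o(\mathfrak{v})}) \in M_{2(n-o(\mathfrak{v}))}(V)^+$ as a block-diagonal sum, and then rearranges rows and columns by a permutation-matrix conjugation to reassemble them into the desired $2n \times 2n$ form.

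With this flexibility in hand, the norm axioms fall out. For scalar homogeneity with $\alpha \neq 0$, I conjugate the $2n \times 2n$ witness by $\text{diag}(I_n, (\bar\alpha / |\alpha|) I_n)$ to replace $\alpha$ by $|\alpha|$ in the off-diagonal, obtaining $\Vert \alpha\mathfrak{v}\Vert = |\alpha| \Vert \mathfrak{v}\Vert$; the case $\alpha = 0$ is immediate. For the triangle inequality, I pick a common $n \geq \max\{o(\mathfrak{u}), o(\mathfrak{v})\}$, use the moreover part for both $\mathfrak{u}$ and $\mathfrak{v}$, and add the witnesses, using linearity of $sa_n$ and of $(\cdot,\cdot)_n^+$. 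For positive definiteness, if $\Vert \mathfrak{v}\Vert = 0$ then for every $k > 0$ both $(k\mathfrak{e}^n, k\mathfrak{e}^n)_n^+ \pm sa_n(\mathfrak{v}) \in \mathfrak{V}^+$ (the minus sign coming from Proposition~\ref{g}(1)); the Archimedean axiom of $\mathfrak{V}^+$ then forces $\pm sa_n(\mathfrak{v}) \in \mathfrak{V}^+$, properness forces $sa_n(\mathfrak{v}) = 0$, and multiplying this identity on the right by $\J_n^*$ yields $\I_n \mathfrak{v} \I_n = 0$ (since $\I_n \J_n^* = 0$ and $\J_n \J_n^* = \I_n$), i.e.\ $\mathfrak{v} = 0$.

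For the $\F$-bimodule inequality $\Vert \mathfrak{a}\mathfrak{v}\mathfrak{b}\Vert \leq \Vert \mathfrak{a}\Vert \Vert \mathfrak{v}\Vert \Vert \mathfrak{b}\Vert$, I choose $n$ large enough to house $\mathfrak{a}, \mathfrak{b}, \mathfrak{v}$ simultaneously, and write $a, b \in M_n$ and $w = T_n^{-1}(\mathfrak{v})$. Conjugating a witness for $\mathfrak{v}$ by $\text{diag}(a^*, b) \in M_{2n}$ gives
\[
\begin{bmatrix} k(aa^*)e^n & awb \\ b^*w^*a^* & k(b^*b)e^n \end{bmatrix} \in M_{2n}(V)^+,
\]
using that scalar matrices commute with $e^n$ through the $V$-entries. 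The scalar inequality $\Vert a\Vert^2 I_n - aa^* \geq 0$ factors as $d^*d$, whence $(\Vert a\Vert^2 I_n - aa^*)e^n = d^* e^n d \in M_n(V)^+$ (again by the commutativity of scalar matrices with $e^n$); the analogous estimate for $b$ lets me enlarge the diagonals to $k\Vert a\Vert^2 e^n$ and $k\Vert b\Vert^2 e^n$. A final conjugation by $\text{diag}(\sqrt{\Vert b\Vert/\Vert a\Vert}\,I_n, \sqrt{\Vert a\Vert/\Vert b\Vert}\,I_n)$ rebalances the diagonals to the common value $k\Vert a\Vert \Vert b\Vert e^n$, delivering the bimodule inequality (with the degenerate cases $a = 0$ or $b = 0$ being trivial).
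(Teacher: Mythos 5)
Your proposal is correct, and the overall strategy --- transporting everything through the identification $T_n:M_n(V)\to \I_n\mathfrak{V}\I_n$ with $e=T^{-1}(\mathfrak{e})$, so that $(k\mathfrak{e}^n,k\mathfrak{e}^n)_n^+ + sa_n(\mathfrak{v})\in\mathfrak{V}^+$ becomes positivity of the $2\times 2$ block matrix $\begin{bmatrix} ke^n & T_n^{-1}(\mathfrak{v})\\ T_n^{-1}(\mathfrak{v})^* & ke^n\end{bmatrix}$ --- is exactly the paper's. The difference is in what is proved versus cited. The paper observes that $(V,\{M_n(V)^+\},e)$ is a matrix order unit space and then simply \emph{recalls} that such a space, equipped with the norms $\Vert v\Vert_n=\inf\{k>0: \begin{bmatrix} ke^n & v\\ v^* & ke^n\end{bmatrix}\in M_{2n}(V)^+\}$, is an $L^\infty$-matricially $\ast$-normed space; the norm axioms, the stability $\Vert v\oplus 0_m\Vert_{n+m}=\Vert v\Vert_n$ (which is the ``moreover'' clause), and the inequality $\Vert avb\Vert_n\le\Vert a\Vert\,\Vert v\Vert_n\,\Vert b\Vert$ are all imported from that cited structure, and the proof reduces to bookkeeping with $T_n$ and $\sigma_n$. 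You instead re-derive all of these facts from the matrix-ordered-space axioms: the level-change equivalence by isometric compression and block-diagonal padding with a permutation conjugation, homogeneity by a diagonal unitary conjugation, the triangle inequality by adding witnesses at a common level, definiteness from the Archimedean and properness axioms together with the algebra of $\J_n$, and the bimodule bound by conjugating with $\mathrm{diag}(a^*,b)$ and enlarging the diagonal blocks via the factorization $\Vert a\Vert^2 I_n-aa^*=d^*d$ followed by a rebalancing conjugation. All of these steps are sound. What your route buys is self-containedness (no appeal to the $L^\infty$-matricially normed structure of matrix order unit spaces); what the paper's route buys is brevity, at the cost of resting on that external result.
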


\begin{proof}
Let $(V,\lbrace M_n(V)^+ \rbrace)$ be the matrix ordered space corresponding to $(\mathfrak{V},\mathfrak{V}^+)$ as in Theorem \ref{w16}. Put $e = T^{-1}(\mathfrak{e})$. By Remark \ref{h}(1) and (3) and by Proposition \ref{g}(2), we get that $(V,\lbrace M_n(V)^+\rbrace,e)$ is a matrix order unit space. Also recall that $(V,\lbrace M_n(V)^+\rbrace,e)$ with $\lbrace \Vert \cdot \Vert_n \rbrace$ is an $L^\infty$-matricially $\ast$-normed space, where each $\Vert \cdot \Vert_n$ is given in the following way:

$$\Vert v \Vert_n := \inf \left \lbrace k > 0: \begin{bmatrix} k e^n & v \\ v^* & k e^n \end{bmatrix} \in M_{2n}(V)^+ \right \rbrace$$ for all $v \in M_n(V)$.

Note that for any $\mathfrak{v} \in \mathfrak{V}$ and $k \in \mathbb{R}$, we have $(k \mathfrak{e}^n,k \mathfrak{e}^n)_n^+ + sa_n(\mathfrak{v}) \in \mathfrak{V}^+$ if and only if $\begin{bmatrix} ke^n & T_n^{-1}(\mathfrak{v}) \\ {T_n^{-1}(\mathfrak{v})}^* & ke^n \end{bmatrix} \in M_{2n}(V)^+$ for all $n \ge o(\mathfrak{v})$. Also note that for $n > o(\mathfrak{v})$, we have that 
$$\begin{bmatrix} ke^{o(\mathfrak{v})} & T_{o(\mathfrak{v})}^{-1}(\mathfrak{v}) \\ {T_{o(\mathfrak{v})}^{-1}(\mathfrak{v})}^* & ke^{o(\mathfrak{v})} \end{bmatrix} \in M_{2o(\mathfrak{v})}(V)^+$$ 

if and only if 

$$\begin{bmatrix} ke^n & T_n^{-1}(\mathfrak{v}) \\ {T_n^{-1}(\mathfrak{v})}^* & ke^n \end{bmatrix} \in M_{2n}(V)^+.$$ Thus $\Vert \mathfrak{v} \Vert = \Vert T_{o(\mathfrak{v})}^{-1} (\mathfrak{v}) \Vert_{o(\mathfrak{v})}$ and 
$(k \mathfrak{e}^{o(\mathfrak{v})},k \mathfrak{e}^{o(\mathfrak{v})})_{o(\mathfrak{v})}^+ + sa_{o(\mathfrak{v})}(\mathfrak{v}) \in \mathfrak{V}^+$ if and only if 
$(k \mathfrak{e}^n, k \mathfrak{e}^n)_n^+ + sa_n (\mathfrak{v}) \in \mathfrak{V}^+$. Since $\Vert v \oplus 0_m \Vert_{n+m} = \Vert v \Vert_n$ for all $v \in M_n(V)$ and $n,m \in \mathbb{N}$, we get that $\Vert \cdot \Vert$ determines a norm on $\mathfrak{V}$ and 
$\Vert \mathfrak{v} \Vert= \inf \lbrace k > 0: (k \mathfrak{e}^n,k \mathfrak{e}^n)_n^+ + sa_n(\mathfrak{v}) \in \mathfrak{V}^+~\textrm{for some}~n \in \mathbb{N}~ \textrm{with} ~\I_n \mathfrak{v} \I_n = \mathfrak{v} \rbrace.$

Finally, let $\mathfrak{a},\mathfrak{b} \in \mathfrak{F}$ and $\mathfrak{v} \in \mathfrak{V}$. Put $n = \textrm{max} \lbrace o(\mathfrak{a}), o(\mathfrak{b}), o(\mathfrak{v}) \rbrace$. Then 
\begin{eqnarray*}
\Vert \mathfrak{a} \mathfrak{v} \mathfrak{b}\Vert & = & \Vert T_{o(\mathfrak{a} \mathfrak{v} \mathfrak{b})}^{-1}(\mathfrak{a} \mathfrak{v} \mathfrak{b}) \Vert_{o(\mathfrak{a} \mathfrak{v} \mathfrak{b})} = \Vert T_n^{-1}(\mathfrak{a} \mathfrak{v} \mathfrak{b}) \Vert_n  \\
&=& \Vert \sigma_n^{-1}(\mathfrak{a}) T_n^{-1}(\mathfrak{v}) \sigma_n^{-1}(\mathfrak{b}) \Vert_n \\
& \le & \Vert \sigma_n^{-1}(\mathfrak{a}) \Vert \Vert T_n^{-1}(\mathfrak{v}) \Vert_n \Vert\sigma_n^{-1}(\mathfrak{b}) \Vert \\
&=& \Vert \mathfrak{a} \Vert \Vert \mathfrak{v} \Vert \Vert \mathfrak{b} \Vert.
\end{eqnarray*}
so that $\Vert \cdot \Vert$ is a $\mathfrak{F}$-bimodule norm on $\mathfrak{V}$. Hence $(\mathfrak{V},\mathfrak{V}^+,\mathfrak{e})$ with $\Vert \cdot \Vert$ is a $\F$-bimodule normed space.
\end{proof}

\begin{definition}
Let $(\mathfrak{V},\mathfrak{V}^+)$ be a non-degenerate ordered $\F$-bimodule and let $\Vert \cdot \Vert$ be a $\mathfrak{F}$-bimodule norm on $\mathfrak{V}$ . Also let $\mathfrak{u}, \mathfrak{v} \in \mathfrak{V}^+.$ We write $\mathfrak{u} \perp_\infty \mathfrak{v}$, if $$\Vert k_1 \mathfrak{u} + k_2 \mathfrak{v} \Vert = \max \lbrace \Vert k_1 \mathfrak{u} \Vert, \Vert k_2 \mathfrak{v} \Vert \rbrace$$ for all $k_1, k_2 \in \mathbb{R}$ and $\mathfrak{u} \perp_\infty^a \mathfrak{v}$, if $\mathfrak{u}_1 \perp_\infty \mathfrak{v}_1$ for all $\mathfrak{u}_1,\mathfrak{v}_1 \in \mathfrak{V}^+$ with $\mathfrak{u}_1 \le \mathfrak{u}$ and $\mathfrak{v}_1 \le \mathfrak{v}.$
\end{definition}

\begin{proposition}
Let $(\mathfrak{V},\mathfrak{V}^+,\mathfrak{e})$ be a local $\F$-order unit bimodule and let $\mathfrak{u}, \mathfrak{v} \in \mathfrak{V}^+$ such that $\Vert \mathfrak{u}\Vert =1= \Vert \mathfrak{v} \Vert.$ Then $\mathfrak{u} \perp_\infty \mathfrak{v}$ if and only if $\Vert \mathfrak{u} + \mathfrak{v} \Vert = 1.$ 
\end{proposition}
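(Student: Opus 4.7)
The forward direction is immediate: applying the definition of $\perp_\infty$ with $k_1 = k_2 = 1$ gives $\Vert \mathfrak{u} + \mathfrak{v} \Vert = \max\{\Vert \mathfrak{u} \Vert, \Vert \mathfrak{v} \Vert\} = 1$, so the substance is the converse. Assume $\Vert \mathfrak{u} + \mathfrak{v} \Vert = 1$. My plan is to transfer the problem to the associated matrix order unit space $(V, \{M_n(V)^+\}, e)$ furnished by Theorem~\ref{w16} together with Proposition~\ref{g}(2) and Proposition~\ref{j}. Choose $n$ large enough that $\I_n \mathfrak{u} \I_n = \mathfrak{u}$ and $\I_n \mathfrak{v} \I_n = \mathfrak{v}$, and set $u = T_n^{-1}(\mathfrak{u})$, $v = T_n^{-1}(\mathfrak{v})$. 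Then $u, v \in M_n(V)^+$ with $\Vert u \Vert_n = \Vert v \Vert_n = \Vert u + v \Vert_n = 1$, and the identity $\Vert \mathfrak{w} \Vert = \Vert T_n^{-1}(\mathfrak{w}) \Vert_n$ established in the proof of Proposition~\ref{j} reduces the goal to showing $\Vert k_1 u + k_2 v \Vert_n = \max\{|k_1|, |k_2|\}$ for all $k_1, k_2 \in \mathbb{R}$.

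Assume without loss of generality that $|k_1| \geq |k_2|$. For the upper bound, since $u + v \in M_n(V)^+$ and $\Vert u + v \Vert_n = 1$, the Archimedean property of $M_n(V)^+$ (Remark~\ref{h}(3)) gives $u + v \leq e^n$; hence $\pm(k_1 u + k_2 v) \leq |k_1| u + |k_2| v \leq |k_1|(u + v) \leq |k_1| e^n$, which yields $\Vert k_1 u + k_2 v \Vert_n \leq |k_1|$. For the matching lower bound, I would invoke the state-space representation of the order unit norm on the Archimedean order unit space $(M_n(V), M_n(V)^+, e^n)$: by weak-$*$ compactness of the state space, there exists a state $\phi$ on $M_n(V)$ with $\phi(u) = \Vert u \Vert_n = 1$. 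Then $\phi(e^n) = 1$ and $\phi(u + v) \leq \Vert u + v \Vert_n = 1$ force $\phi(v) \leq 0$; combined with $v \in M_n(V)^+$ this gives $\phi(v) = 0$. Consequently $\phi(k_1 u + k_2 v) = k_1 \phi(u) + k_2 \phi(v) = k_1$, so $\Vert k_1 u + k_2 v \Vert_n \geq |\phi(k_1 u + k_2 v)| = |k_1| = \max\{|k_1|, |k_2|\}$, as required.

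The only step needing some care is the existence of a peaking state $\phi$ on $M_n(V)$ with $\phi(u) = 1$; this rests purely on the Archimedean order unit structure of $(M_n(V), M_n(V)^+, e^n)$, which is precisely what Proposition~\ref{g}(2) and Remark~\ref{h}(3) guarantee, so no machinery beyond what the excerpt already provides is needed. Once this is in hand, the upper/lower bound split above is routine, and translating back through $T_n$ yields $\Vert k_1 \mathfrak{u} + k_2 \mathfrak{v} \Vert = \max\{\Vert k_1 \mathfrak{u} \Vert, \Vert k_2 \mathfrak{v} \Vert\}$, establishing $\mathfrak{u} \perp_\infty \mathfrak{v}$.
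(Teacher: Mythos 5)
Your proposal is correct, and the reduction to the matrix order unit space $(V,\{M_n(V)^+\},e)$ via Theorem~\ref{w16} and Proposition~\ref{j} is exactly the reduction the paper performs. Where you diverge is at the key step: the paper, after transferring $\mathfrak{u},\mathfrak{v}$ to $u=T_{o(\mathfrak{u})}^{-1}(\mathfrak{u})$, $v=T_{o(\mathfrak{u})}^{-1}(\mathfrak{v})$, simply invokes the external result \cite[Theorem~3.3]{K14}, which asserts that for norm-one positive elements of an order unit space $u\perp_\infty v$ iff $\Vert u+v\Vert=1$. You instead reprove that equivalence from scratch: the upper bound $\Vert k_1u+k_2v\Vert_n\le\max\{|k_1|,|k_2|\}$ from $u+v\le e^n$, and the lower bound from a state $\phi$ peaking at $u$ (which exists by weak-$*$ compactness of the state space of the Archimedean order unit space $(M_n(V)_{sa},M_n(V)^+,e^n)$, forcing $\phi(v)=0$ and hence $\phi(k_1u+k_2v)=k_1$). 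This is a legitimate and standard argument; your observation that the order unit norm $\Vert\cdot\Vert_n$ restricted to self-adjoint elements agrees with the norm defined by the $2n\times 2n$ positivity criterion is implicitly needed and does hold. The trade-off is that the paper's route is shorter but leans on an external citation, while yours is self-contained at the cost of importing the state-space machinery (available, e.g., from \cite{MA71}); in effect you have supplied a proof of the special case of \cite[Theorem~3.3]{K14} that the paper uses as a black box.
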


\begin{proof}
Let $(V,\lbrace M_n(V)^+\rbrace)$ be the matrix ordered space corresponding to $(\mathfrak{V},\mathfrak{V}^+)$ as in Theorem \ref{w16}. Put $e=T^{-1}(\mathfrak{e})$. Then $(V, \lbrace M_n(V)^+ \rbrace, e)$ is a matrix order unit space.
By proof of Proposition \ref{j}, we get that $\Vert \mathfrak{v} \Vert = \Vert T_n^{-1}(\mathfrak{v}) \Vert_n$ for any $\mathfrak{v} \in \mathfrak{V}$ and $n \ge o(\mathfrak{v})$. Also note that $\mathfrak{v} \in \mathfrak{V}^+$ if and only if $T_n^{-1}(\mathfrak{v}) \in M_n(V)^+$ for all $n \ge o(\mathfrak{v})$. 

Next let $\mathfrak{u}, \mathfrak{v} \in \mathfrak{V}^+$ with $\Vert \mathfrak{u}\Vert =1= \Vert \mathfrak{v} \Vert.$ Without loss of generality, we may assume that $o(\mathfrak{u}) \ge o(\mathfrak{v})$. Then $\mathfrak{u} \perp_\infty \mathfrak{v}$ in $\mathfrak{V}^+$ if and only if $T_{o(\mathfrak{u})}^{-1}(\mathfrak{u}) \perp_\infty T_{o(\mathfrak{u})}^{-1}(\mathfrak{v})$ in $M_{2o(\mathfrak{u})}(V)$. By \cite[Theorem 3.3]{K14}, we have $T_{o(\mathfrak{u})}^{-1}(\mathfrak{u}) \perp_\infty T_{o(\mathfrak{u})}^{-1}(\mathfrak{v})$ if and only if $\Vert T_{o(\mathfrak{u})}^{-1}(\mathfrak{u}) + T_{o(\mathfrak{u})}^{-1}(\mathfrak{v})\Vert_{o(\mathfrak{u})} = 1$. Thus $\mathfrak{u} \perp_\infty \mathfrak{v}$ if and only if $\Vert \mathfrak{u} + \mathfrak{v} \Vert = 1$.
\end{proof}

\begin{definition}
Let $(\mathfrak{V},\mathfrak{V}^+,\mathfrak{e})$ is a Local $\F$-order unit bimodule such that   
\begin{enumerate}
\item[(a)]$(\mathfrak{V},\mathfrak{V}^+,\vert \cdot \vert)$ is a non-degenerate absolutely ordered $\F$- bimodule. and
\item[(b)]$\perp = \perp_\infty^a$ on $\mathfrak{V}^+.$
\end{enumerate}
Then $(\mathfrak{V},\mathfrak{V}^+,\mathfrak{e},\vert \cdot \vert)$ is said to be non-degenerate absolute order unital $\F$-bimodule.
\end{definition}

Next two results follow from Theorems \ref{26} and \ref{z} respectively.

\begin{corollary}\label{27}
Let $(V,\lbrace M_n(V)^+ \rbrace, e , \lbrace \vert \cdot \vert_n\rbrace)$ be an absolute matrix order unit space. Let $(\mathfrak{V},\mathfrak{V}^+,\vert \cdot \vert)$ be non-degenerate absolutely ordered $\F$-bimodule corresponding to $(V,\lbrace M_n(V)^+ \rbrace,\lbrace \vert \cdot \vert_n\rbrace)$ as in Theorem \ref{26}. Then $(\mathfrak{V},\mathfrak{V}^+,T(e),\vert \cdot \vert)$ is a non-degenerate absolute order unital $\F$-bimodule. 
\end{corollary}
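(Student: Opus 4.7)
My plan is to verify the three defining conditions of a non-degenerate absolute order unital $\F$-bimodule for $(\mathfrak{V},\mathfrak{V}^+,T(e),\vert\cdot\vert)$: namely, that $(\mathfrak{V},\mathfrak{V}^+,T(e))$ is a local $\F$-order unit bimodule, that $(\mathfrak{V},\mathfrak{V}^+,\vert\cdot\vert)$ is a non-degenerate absolutely ordered $\F$-bimodule, and that $\perp = \perp_\infty^a$ on $\mathfrak{V}^+$. The second condition is immediate from Theorem \ref{26}, so only the first and the third require work.

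For the local $\F$-order unit bimodule conditions, since $V$ is a matrix order unit space, $V^+$ is proper and each $M_n(V)^+$ is Archimedean; Remark \ref{h}(1) and (3) then give that $\mathfrak{V}^+$ is proper and Archimedean. To see that $T(e)$ is a local order unit, the equality $\I_1 T(e)\I_1 = T(e)$ is clear from the formula $T(e) = \mathfrak{e}_{1,1} e \mathfrak{e}_{1,1}$. For the order unit property, given $\mathfrak{v} \in \I_1 \mathfrak{V} \I_1$, set $v = T^{-1}(\mathfrak{v})$; as $e$ is an order unit in $V$, there exists $k > 0$ with $\begin{bmatrix} ke & v \\ v^* & ke \end{bmatrix} \in M_2(V)^+$, and applying $T_2$ yields $(kT(e),kT(e))_1^+ + sa_1(\mathfrak{v}) \in \mathfrak{V}^+$.

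The substantive step is verifying $\perp = \perp_\infty^a$ on $\mathfrak{V}^+$. My plan is to reduce this to the corresponding identity on each $M_n(V)^+$, which is part of the hypothesis that $V$ is an absolute matrix order unit space. Given $\mathfrak{u},\mathfrak{v} \in \mathfrak{V}^+$, choose $n \geq \max\{o(\mathfrak{u}),o(\mathfrak{v})\}$ and set $u := T_n^{-1}(\mathfrak{u})$, $v := T_n^{-1}(\mathfrak{v})$, both in $M_n(V)^+$. By the construction of $\vert\cdot\vert$ in Theorems \ref{26} and \ref{z}, $\mathfrak{u}\perp\mathfrak{v}$ is equivalent to $u \perp v$; by Proposition \ref{j}, the norm $\Vert\cdot\Vert$ on $\mathfrak{V}$ restricts via $T_n$ to the matrix order unit norm $\Vert\cdot\Vert_n$, so $\mathfrak{u}\perp_\infty\mathfrak{v}$ is equivalent to $u\perp_\infty v$.

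The main obstacle lies in extending this to the hereditary version $\perp_\infty^a$, which quantifies over all sub-pairs. Here Proposition \ref{150} is the key tool: if $\mathfrak{u}_1 \leq \mathfrak{u}$ in $\mathfrak{V}^+$, then $o(\mathfrak{u}_1)\leq o(\mathfrak{u})\leq n$, so $\mathfrak{u}_1$ lies in $T_n(M_n(V)^+)$; the analogous statement holds for $\mathfrak{v}_1 \leq \mathfrak{v}$. This produces a bijection between sub-pairs of $(\mathfrak{u},\mathfrak{v})$ in $\mathfrak{V}^+$ and sub-pairs of $(u,v)$ in $M_n(V)^+$, so $\mathfrak{u}\perp_\infty^a\mathfrak{v}$ iff $u\perp_\infty^a v$. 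Appealing to $\perp = \perp_\infty^a$ on $M_n(V)^+$ from the absolute matrix order unit space hypothesis then closes the argument.
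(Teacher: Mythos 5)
Your verification is correct and follows exactly the route the paper intends: the paper gives no written proof beyond asserting that the corollary follows from Theorem \ref{26}, and your argument fills in that assertion using precisely the paper's own supporting results (the remark that $(\mathfrak{V},\mathfrak{V}^+,T(e))$ is a local $\F$-order unit bimodule via Remark \ref{h}, the norm identification of Proposition \ref{j}, and Proposition \ref{150} to confine sub-elements to $T_n(M_n(V)^+)$ for the $\perp_\infty^a$ reduction). No gaps; the use of Proposition \ref{150} is legitimate since you have already established that $\mathfrak{V}^+$ is proper and Archimedean.
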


\begin{corollary}\label{l}
Let $(\mathfrak{V},\mathfrak{V}^+,\mathfrak{e},\vert \cdot \vert)$ be a non-degenerate absolute order unital $\F$-bimodule. Let $(V,\lbrace M_n(V)^+ \rbrace,\lbrace \vert \cdot \vert_n\rbrace)$ be absolute matrix ordered space corresponding to $(\mathfrak{V},\mathfrak{V}^+,\vert \cdot \vert)$ as in Theorem \ref{z}. Then $(V,\lbrace M_n(V)^+ \rbrace, T^{-1}(\mathfrak{e}) , \lbrace \vert \cdot \vert_n\rbrace)$ is an absolute matrix order unit space.
\end{corollary}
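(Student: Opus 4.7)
The plan is to derive the three defining ingredients of an absolute matrix order unit space in turn: the matrix-order-unit structure on $(V, \lbrace M_n(V)^+ \rbrace, T^{-1}(\mathfrak{e}))$, the absolutely matrix ordered structure $\lbrace \vert\cdot\vert_n \rbrace$, and the compatibility condition $\perp = \perp_\infty^a$ on each $M_n(V)^+$. The first two drop out of results already in this excerpt; the real work sits in the third.

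First I would invoke Theorem \ref{z} to get that $(V, \lbrace M_n(V)^+ \rbrace, \lbrace \vert\cdot\vert_n \rbrace)$ is an absolutely matrix ordered space whose matricial inductive limit reproduces $(\mathfrak{V}, \mathfrak{V}^+, \vert\cdot\vert)$. Since $(\mathfrak{V}, \mathfrak{V}^+, \mathfrak{e})$ is in particular a local $\F$-order unit bimodule, $\mathfrak{V}^+$ is proper and Archimedean; Remark \ref{h}(1) and (3) then yield that $V^+$ is proper and that $M_n(V)^+$ is Archimedean for every $n$, while Proposition \ref{g}(2) provides that $e := T^{-1}(\mathfrak{e})$ is an order unit for $V_{sa}$. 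Consequently $(V, \lbrace M_n(V)^+ \rbrace, e)$ is a matrix order unit space.

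For the remaining condition $\perp = \perp_\infty^a$ on $M_n(V)^+$, the approach is to transfer the analogous identity on $\mathfrak{V}^+$ through the maps $T_n$. Concretely, for $u, v \in M_n(V)^+$ I would prove that $u \perp v$ in $M_n(V)^+$ iff $T_n(u) \perp T_n(v)$ in $\mathfrak{V}^+$, and that $u \perp_\infty^a v$ in $M_n(V)^+$ iff $T_n(u) \perp_\infty^a T_n(v)$ in $\mathfrak{V}^+$. The first equivalence follows from $T_n(\vert u - v \vert_n) = \vert T_n(u) - T_n(v) \vert$ together with the linearity of $T_n$. For the second, the proof of Proposition \ref{j} shows that $T_n$ is an isometric order isomorphism from $(M_n(V), \Vert \cdot \Vert_n)$ onto $(\I_n \mathfrak{V} \I_n, \Vert \cdot \Vert)$, which at once yields that $T_n$ preserves $\perp_\infty$. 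To upgrade to $\perp_\infty^a$ I would use Proposition \ref{150}: any $\mathfrak{u}_1 \in \mathfrak{V}^+$ with $\mathfrak{u}_1 \le T_n(u)$ satisfies $o(\mathfrak{u}_1) \le o(T_n(u)) \le n$ and hence lies in $\I_n \mathfrak{V} \I_n = T_n(M_n(V)^+)$, and similarly for $\mathfrak{v}_1 \le T_n(v)$. Thus the families of test elements at the two levels correspond bijectively through $T_n$, and combining this with the standing hypothesis $\perp = \perp_\infty^a$ on $\mathfrak{V}^+$ delivers the identity on $M_n(V)^+$.

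The delicate point, I expect, is precisely this quantifier match inside $\perp_\infty^a$: a priori one might fear that positive elements of $\mathfrak{V}^+$ below $T_n(u)$ have order strictly larger than $n$, in which case the transfer would fail. Proposition \ref{150} removes exactly this threat, and once it is in place the remainder is routine bookkeeping along the isometric order isomorphism $T_n$.
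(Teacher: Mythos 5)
Your proposal is correct and matches the paper's intended route: the paper offers no written argument for this corollary, simply asserting that it "follows from Theorem \ref{z}", and your write-up supplies exactly the omitted details --- the matrix order unit structure via Remark \ref{h} and Proposition \ref{g}(2), and the transfer of $\perp=\perp_\infty^a$ through the isometric order isomorphism $T_n$, with Proposition \ref{150} correctly invoked to ensure that every positive element below $T_n(u)$ stays in $\I_n\mathfrak{V}\I_n$ so the quantifiers in $\perp_\infty^a$ match. (The only nit is the harmless slip $\I_n\mathfrak{V}\I_n = T_n(M_n(V)^+)$, which should read $\I_n\mathfrak{V}^+\I_n = T_n(M_n(V)^+)$.)
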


\section{$K_0$-group corresponding to an absolute matrix order unit space}

In this section, we prove the existence of $K_0$-group of an absolute matrix order unit space with its explicit form. 

\begin{definition}\cite[Definition 3.1]{PI19}
Let $(V,e)$ be an absolute matrix order unit space and let $v \in M_n(V)$ for some $n\in \mathbb{N}.$ Then $v$ is said to be \emph{order projection}, if $v^* = v$ and $\vert 2 v - e^n \vert_n = e^n.$ 

An equivalent definition may be extended to non-degenerate absolute order unital $\F$-bimodules. Let $(\mathfrak{V},\mathfrak{V}^+,\mathfrak{e},\vert \cdot \vert)$ be a non-degenerate absolute order unital $\F$-bimodule. Then for $\mathfrak{p} \in \mathfrak{V}^+$ with $\Vert \mathfrak{p} \Vert \le 1$, we say that $\mathfrak{p}$ is an order projection, if $\mathfrak{p}^* = \mathfrak{p}$ and $\vert \mathfrak{e}^n - 2 \mathfrak{p} \vert = \mathfrak{e}^n$ whenever $n \ge o(\mathfrak{p})$.

Following the constructions of the previous section, we may identify the direct limit of an absolute matrix order unit space $V$ with $M_\infty(V) = \displaystyle \bigcup_{n=1}^\infty M_n(V)$. Further, under this identification, the corresponding set of projections may be identified with $\mathcal{OP}_\infty(V) = \displaystyle \bigcup_{n=1}^\infty \mathcal{OP}_n(V)$.

Now let $v \in M_{m,n}(V)$ for some $m, n \in \mathbb{N}.$ Then $v$ is said to be \emph{partial isometry}, if $\vert v \vert_{m,n}$ and $\vert v^* \vert_{n,m}$ are order projections.

The set of all partial isometries in $M_{m,n}(V)$ will be denoted by $\mathcal{PI}_{m,n}(V)$ and the set of all order projections in $M_n(V)$ will be denoted by $\mathcal{OP}_n(V)$. For $m = n$, we write $\mathcal{PI}_{m,n}(V) = \mathcal{PI}_n(V)$. For $n = 1$, we shall write $\mathcal{PI}(V)$ for $\mathcal{PI}_1(V)$ and $\mathcal{OP}(V)$ for $\mathcal{OP}_1(V)$. 
\end{definition}

\begin{definition}\cite[Definition 4.1]{PI19}
Let $V$ be an absolute matrix order unit space. Consider the set of order projections in the corresponding direct limit $M_{\infty}(V)$: 
$$\mathcal{OP}_{\infty}(V)= \lbrace p: p \in \mathcal{OP}_n(V) ~ \textrm{for some} ~ n \in \mathbb{N} \rbrace.$$
Given $p\in \mathcal{OP}_m(V)$ and $q\in \mathcal{OP}_n(V)$, we say that $p$ is partial isometric equivalent to $q$, (we write, $p\sim q$), if there exists $v\in \mathcal{PI}_{m,n}(V)$ such that $p=\vert v^*\vert_{n,m}$ and $q=\vert v\vert_{m,n}$. 
\end{definition}
We recall the following properties of this relation.
\begin{proposition} \cite[Propositions 4.1 and 4.2]{PI19} \label{15}
	Let $V$ be an absolute matrix order unit space and let $p,q,r,p',q'\in \mathcal{OP}_\infty(V)$. Then 
	\begin{enumerate}
		\item If $m, n \in \mathbb{N}$ and let $p\in\mathcal{OP}_m(V)$, then $p\sim p\oplus 0_n$ and $p \sim 0_n \oplus p$; 
		\item If $p \sim q$ and $p' \sim q'$ with $p \perp p'$ and $q \perp q'$, then $p + p' \sim q + q'$;
		\item If $p\sim p'$ and $q\sim q',$ then $p\oplus q\sim p'\oplus q';$
		\item $p\oplus q\sim q\oplus p;$
		\item If $p,q\in \mathcal{OP}_n(V)$ for some $n\in \mathbb{N}$ such that $p\perp q,$ then $p+q\sim p\oplus q;$
		\item $(p\oplus q)\oplus r=p\oplus (q\oplus r);$ 
		\item $\sim$ is an equivalence relation in $\mathcal{OP}_\infty(V)$, if the following condition holds:
		
		\textbf{(T):} If $u \in \mathcal{PI}_{m,n}(V)$ and $v \in \mathcal{PI}_{l,n}(V)$ with $\vert u \vert_{m,n} = \vert v \vert_{l,n}$, then there exists a $w \in \mathcal{PI}_{m,l}(V)$ such that $\vert w^* \vert_{l,m} = \vert u^* \vert_{n,m}$ and $\vert w \vert_{m,l} = \vert v^* \vert_{n,l}$.
	\end{enumerate} 
\end{proposition}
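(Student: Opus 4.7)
The plan is to verify the seven clauses in turn by exhibiting an explicit witnessing partial isometry (or reducing to an earlier clause), and then to show that condition (T) is exactly what is needed to promote $\sim$ from a reflexive, symmetric relation to an equivalence. The main technical obstacle will be the orthogonal additivity in clause (2); everything else either follows from (2) or is a direct manipulation of the axioms in Definition \ref{152}.

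Clauses (1), (3), (4), (6) are structural. For (1), the witness for $p \sim p \oplus 0_n$ is $v = [\,p\;\;0_{m,n}\,] \in M_{m,m+n}(V)$: Proposition \ref{151}(5) yields $|v|_{m,m+n} = p \oplus 0_n$, Proposition \ref{151}(4) applied to $v^*$ gives $|v^*|_{m+n,m} = p$, and Definition \ref{152}(3) shows that both absolute values are order projections. The statement $p \sim 0_n \oplus p$ is analogous. Clause (3) is immediate from the witness $v_1 \oplus v_2$ (where $v_i$ witnesses the $i$th equivalence), using the direct-sum axiom Definition \ref{152}(3). Clause (4) uses the scalar block-swap unitary $U$ satisfying $U(p \oplus q)U^* = q \oplus p$: taking $w = U^*(q \oplus p) = (p \oplus q)U^*$ and applying Proposition \ref{151}(1) to both $w$ and $w^*$ (noting that $U$ and $U^*$ are isometries) gives $|w| = q \oplus p$ and $|w^*| = p \oplus q$. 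Clause (6) is the associativity of the block direct sum.

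Clause (2) is the heart of the matter. Given partial isometries $v_1, v_2 \in M_{m,n}(V)$ with $|v_1^*| = p$, $|v_2^*| = p'$, $|v_1| = q$, $|v_2| = q'$, and orthogonality $p \perp p'$, $q \perp q'$, the candidate partial isometry is $v_1 + v_2$, and one must establish
\[
|v_1 + v_2|_{m,n} = q + q', \qquad |(v_1+v_2)^*|_{n,m} = p + p'.
\]
In a C*-algebra this is immediate from $pp' = 0$ and the polar decomposition; in our framework the analogous identities must be extracted from the orthogonality axioms (6)--(7) of Definition \ref{18}, combined with the bimodule dilation of Corollary \ref{27}, in which orthogonal projections in $M_n(V)$ are realised via $\F$-independent band-supports in $\mathfrak{V}$. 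This is the step I expect to be the chief obstacle, since rectangular partial isometries do not descend to the square inductive limit without first embedding $v_1, v_2$ into a common square block. Once (2) is available, (5) is immediate: (1) yields $p \sim p \oplus 0_n$ and $q \sim 0_n \oplus q$; the orthogonality $p \perp q$ in $M_n(V)$ together with Definition \ref{152}(3) gives $(p \oplus 0_n) \perp (0_n \oplus q)$ in $M_{2n}(V)$; and (2) then delivers $p + q \sim (p \oplus 0_n) + (0_n \oplus q) = p \oplus q$.

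Finally, for (7): reflexivity uses $v = p$, symmetry uses $v \mapsto v^*$. For transitivity, suppose $p \sim q$ via $u \in \mathcal{PI}_{m,n}(V)$ and $q \sim r$ via $v' \in \mathcal{PI}_{n,l}(V)$; setting $v := (v')^* \in \mathcal{PI}_{l,n}(V)$ gives $|u|_{m,n} = q = |v|_{l,n}$, so condition (T) supplies $w \in \mathcal{PI}_{m,l}(V)$ with $|w^*|_{l,m} = |u^*|_{n,m} = p$ and $|w|_{m,l} = |v^*|_{n,l} = r$, whence $p \sim r$. Thus (T) is precisely the ingredient needed to make $\sim$ an equivalence relation.
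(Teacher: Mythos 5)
First, a point of reference: the paper does not prove this proposition at all --- it is recalled verbatim from \cite[Propositions 4.1 and 4.2]{PI19} --- so there is no internal argument to compare yours against. Judged on its own terms, your handling of clauses (1), (3), (4), (6) and (7) is correct and is the natural one: the witnesses $[\,p\;\;0\,]$, $v_1\oplus v_2$ and $(p\oplus q)U^*$ (for the scalar block-swap unitary $U$), combined with Proposition \ref{151}(1),(4),(5) and Definition \ref{152}(3), do exactly what you claim, and your derivation of transitivity --- applying (T) to $u$ and $v=(v')^*$ --- is precisely how that hypothesis is meant to be used.

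The genuine gap is clause (2), which you correctly identify as the heart of the matter and then do not prove. You name the candidate witness $v_1+v_2$ and the two identities $\vert v_1+v_2\vert_{m,n}=q+q'$ and $\vert (v_1+v_2)^*\vert_{n,m}=p+p'$, but you only assert that these ``must be extracted from the orthogonality axioms'' and flag the step as the chief obstacle; moreover the mechanism you point to is the wrong one. Since $v_1$ and $v_2$ sit in the \emph{same} rectangular block, their images in the inductive limit are not $\F$-independent, so Definition \ref{18}(5) and the ``disjoint band-support'' picture of Corollary \ref{27} do not apply: the orthogonality here is order-theoretic, not positional. The argument that actually works stays inside $M_{m+n}(V)$: pass to the self-adjoint dilations $a_i=\left[\begin{smallmatrix} 0 & v_i\\ v_i^* & 0\end{smallmatrix}\right]$, use Proposition \ref{151}(2) to get $\vert a_1\vert=p\oplus q$ and $\vert a_2\vert=p'\oplus q'$, note that $p\perp p'$ and $q\perp q'$ force $\vert a_1\vert\perp\vert a_2\vert$ because $\vert (p-p')\oplus (q-q')\vert=\vert p-p'\vert\oplus\vert q-q'\vert$, and then invoke orthogonal additivity of $\vert\cdot\vert$ on self-adjoint elements (the fact recorded in Remark \ref{w50} via \cite[Proposition 2.5]{PI19}, that $\vert a_1\vert\perp\vert a_2\vert$ implies $\vert a_1+a_2\vert=\vert a_1\vert+\vert a_2\vert$); reading off the diagonal blocks yields both identities, and $p+p'$, $q+q'$ are order projections because they are sums of orthogonal order projections. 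Until this is written out, clause (2) --- and with it clause (5), which you derive from it --- is unsupported.
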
 

 We extend $\sim$ to the following relation on $\mathcal{OP}_\infty(V)$:

\begin{definition}
Let $V$ be an absolute matrix order unit space. For $p,q\in \mathcal{OP}_\infty(V),$ we say that $p\approx q$, if there exists $r\in \mathcal{OP}_\infty(V)$ such that $p\oplus r\sim q\oplus r.$
\end{definition}

\begin{proposition}\label{16}
Let $V$ be an absolute matrix order unit space and let $p,q \in \mathcal{OP}_\infty(V)$. 
\begin{enumerate}
\item[(1)] $p \sim q$ implies $p \approx q$.  
\item[(2)] If $V$ satisfies condition (T), then 
\begin{enumerate}
 \item[(i)]$\approx$ is an equivalence relation. 
\item[(ii)] $p\approx q$ if and only if $p \oplus e^m \sim q \oplus e^m$ for some $m \in \mathbb{N}.$ 
\end{enumerate}
\end{enumerate}
\end{proposition}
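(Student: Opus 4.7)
The plan is to prove each of the three parts using Proposition \ref{15} as the main toolbox; the only non-routine ingredient appears in part (2)(ii), where I would use the complementary order projection $e^k - r$ to absorb an arbitrary witness $r$ into a multiple of the order unit.

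For part (1), the simplest choice of witness is $r = 0$. Indeed, $0 \in \mathcal{OP}_1(V)$ (since $0^{*} = 0$ and $\vert{-e}\vert_{1} = \vert{-1}\vert \, \vert e\vert_{1} = e$), and $0 \sim 0$ via the partial isometry $v = 0$. Proposition \ref{15}(3) applied to the pair $p \sim q$ and $0 \sim 0$ then yields $p \oplus 0 \sim q \oplus 0$, so $p \approx q$.

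For part (2)(i), reflexivity of $\approx$ is immediate from (1), and symmetry of $\approx$ follows from the symmetry of $\sim$ under (T). For transitivity, given $p \approx q$ via $r_1$ and $q \approx s$ via $r_2$, I would take $r = r_1 \oplus r_2$ and use Proposition \ref{15}(3), (4), and transitivity of $\sim$ under (T) to build the chain
\[
p \oplus r_1 \oplus r_2 \sim q \oplus r_1 \oplus r_2 \sim r_1 \oplus q \oplus r_2 \sim r_1 \oplus s \oplus r_2 \sim s \oplus r_1 \oplus r_2,
\]
which witnesses $p \approx s$.

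For part (2)(ii), the backward direction is immediate: $r = e^m$ is an order projection (since $\vert 2 e^m - e^m\vert_m = \vert e^m\vert_m = e^m$), so $p \oplus e^m \sim q \oplus e^m$ already exhibits $p \approx q$. For the forward direction, assume $p \approx q$ with witness $r \in \mathcal{OP}_k(V)$. The critical step is to introduce $r' := e^k - r$; a short computation gives $r' \in \mathcal{OP}_k(V)$ (since $\vert 2(e^k - r) - e^k\vert_k = \vert 2r - e^k\vert_k = e^k$) and $r \perp r'$ (since $\vert r - r'\vert_k = \vert 2r - e^k\vert_k = e^k = r + r'$). Proposition \ref{15}(5) then yields $e^k = r + r' \sim r \oplus r'$, and Proposition \ref{15}(3) produces the chain
\[
p \oplus e^k \sim p \oplus r \oplus r' \sim q \oplus r \oplus r' \sim q \oplus e^k,
\]
the middle equivalence coming from the hypothesis $p \oplus r \sim q \oplus r$ together with $r' \sim r'$. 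Transitivity of $\sim$ under (T) concludes $p \oplus e^k \sim q \oplus e^k$, so $m = k$ does the job. The only mildly delicate point in the whole argument is the verification that $e^k - r$ is again an order projection and is orthogonal to $r$; beyond that, the argument is bookkeeping with Proposition \ref{15}, and condition (T) is used exactly to supply the transitivity and symmetry of $\sim$ that are repeatedly invoked when chaining equivalences through intermediate order projections.
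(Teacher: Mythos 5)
Your proposal is correct and follows essentially the same route as the paper: part (1) via Proposition \ref{15}(3), transitivity of $\approx$ by combining the two witnesses into a single direct sum, and part (2)(ii) by splitting $e^k$ as $r + (e^k - r)$ with $r \perp e^k - r$ and invoking Proposition \ref{15}(5) and (3). The extra verifications you supply (that $e^k - r$ is an order projection orthogonal to $r$, and the explicit commuting chain in the transitivity step) are correct details that the paper leaves implicit.
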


\begin{proof}
Let $p \sim q$ and also let $r \in \mathcal{OP}_\infty(V)$. Since $r \sim r$, by Proposition \ref{15}(3), we get that $p\oplus r \sim q \oplus r$. Thus (1) follows.

Next, we assume that $V$ satisfies condition (T). By Proposition \ref{15} and by (1), it follows that $\approx$ is reflexive and symmetric. Next, we check transitivity of $\approx$. Let $p \approx q$ and also let $r \in \mathcal{OP}_\infty(V)$ such that $q \approx r$. Then $p \oplus s \sim q \oplus s$ and $q \oplus t \sim r \oplus t$ for some $s,t \in \mathcal{OP}_\infty(V)$. Thus by Propositions \ref{15}, we get that $p \oplus (s \oplus t) \sim r \oplus (s \oplus t)$ so that $p \approx r$. 

Let $p \approx q$. Then there exists $r \in \mathcal{OP}_m(V), m \in \mathbb{N}$ such that $p \oplus r \sim q \oplus r$. Since $r \perp (e^m-r)$, by Proposition \ref{15}(5), we get that $e^m \sim r \oplus (e^m - r)$. Then again applying Proposition \ref{15}(3), we conclude that $p \oplus e^m \sim p \oplus (r \oplus (e^m - r)),~(p \oplus r) \oplus (e^m-r) 
\sim (q \oplus r) \oplus (e^m-r),~q \oplus (r \oplus (e^m-r)) \sim q \oplus e^m.$ Thus by Proposition \ref{15}, we have that $p \oplus e^m \sim q \oplus e^m$.
\end{proof}

\begin{proposition}\label{54}
	Let $V$ be an absolute matrix order unit space and let $p,q,r,p',q'\in \mathcal{OP}_\infty(V)$. 
	\begin{enumerate}
		\item If $m, n \in \mathbb{N}$ and let $p\in\mathcal{OP}_m(V)$, then $p \approx p\oplus 0_n$ and $p \approx 0_n \oplus p$; 
		\item If $V$ satisfies (T), then
		\begin{enumerate}
		\item[(i)] If $p \approx q$ and $p' \approx q'$ with $p \perp p'$ and $q \perp q'$, then $p + p' \approx q + q'$;
		\item[(ii)] If $p\approx p'$ and $q\approx q',$ then $p\oplus q\approx p'\oplus q';$
		\end{enumerate}
		\item $p\oplus q\approx q\oplus p;$
		\item If $p\perp q,$ then $p+q\approx p\oplus q;$
		\item $(p\oplus q)\oplus r=p\oplus (q\oplus r).$
	\end{enumerate} 
\end{proposition}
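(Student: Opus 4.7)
The plan is to derive each assertion from its $\sim$-analogue in Proposition~\ref{15}, using Proposition~\ref{16}(1) as the bridge from $\sim$ to $\approx$, and invoking condition $(T)$ only for the two parts in (2), where one needs $\sim$ (and hence $\approx$) to be transitive. Parts (1), (3), (4) and (5) are essentially free: Proposition~\ref{15}(1) gives $p\sim p\oplus 0_n$ and $p\sim 0_n\oplus p$, Proposition~\ref{15}(4) gives $p\oplus q\sim q\oplus p$, and Proposition~\ref{15}(5) gives $p+q\sim p\oplus q$ whenever $p\perp q$; applying Proposition~\ref{16}(1) in each case yields the desired $\approx$-statement. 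Part (5) is a literal equality and is just the associativity of direct sum of matrices, already recorded as Proposition~\ref{15}(6).

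For (2)(ii), assume condition $(T)$ and choose witnesses $s,t\in \mathcal{OP}_\infty(V)$ with $p\oplus s\sim p'\oplus s$ and $q\oplus t\sim q'\oplus t$. By Proposition~\ref{15}(3) applied once on each side,
\[
(p\oplus s)\oplus(q\oplus t)\sim (p'\oplus s)\oplus(q'\oplus t).
\]
Now I would use $s\oplus q\sim q\oplus s$ (Proposition~\ref{15}(4)) together with further applications of Proposition~\ref{15}(3) (keeping $p$, resp.\ $p'$, and $t$ fixed) to transpose the middle two summands, obtaining $p\oplus s\oplus q\oplus t\sim p\oplus q\oplus s\oplus t$ and likewise on the primed side. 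Transitivity of $\sim$, which holds under $(T)$ by Proposition~\ref{15}(7), then chains these together to produce $(p\oplus q)\oplus(s\oplus t)\sim (p'\oplus q')\oplus(s\oplus t)$. Setting $r=s\oplus t$, this is exactly $p\oplus q\approx p'\oplus q'$.

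For (2)(i), I would first apply (4) to obtain $p+p'\approx p\oplus p'$ and $q+q'\approx q\oplus q'$, then apply (2)(ii) to the hypotheses $p\approx q$ and $p'\approx q'$ to conclude $p\oplus p'\approx q\oplus q'$. Under $(T)$, Proposition~\ref{16}(2)(i) guarantees that $\approx$ is transitive, and concatenating $p+p'\approx p\oplus p'\approx q\oplus q'\approx q+q'$ yields the result. The only delicate point is the bookkeeping in (2)(ii), where one has to shuttle the witnessing projections $s$ and $t$ past $p,p',q,q'$ using commutativity and associativity of $\oplus$ at the $\sim$-level; this manipulation, routine in itself, is exactly what forces the use of condition $(T)$.
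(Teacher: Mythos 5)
Your argument is correct, and for parts (1), (3), (4), (5) and for (2)(ii) it essentially coincides with the paper's: the paper disposes of (1), (3), (4), (5) exactly by citing Proposition \ref{15}(1), (4), (5), (6) together with Proposition \ref{16}(1), and for (2)(ii) it merely says the result ``follows from Proposition \ref{15}'', so your explicit manoeuvre of passing the witnesses $s,t$ through the middle summands via Proposition \ref{15}(3), (4), (6) and transitivity of $\sim$ is precisely the detail the paper leaves to the reader. The one place where you take a genuinely different route is (2)(i). The paper proves it directly at the level of $\sim$: it chooses witnesses with $p\oplus r\sim q\oplus r$ and $p'\oplus s\sim q'\oplus s$, normalizes them so that $r\perp s$ (whence $p\oplus r\perp p'\oplus s$ and $q\oplus r\perp q'\oplus s$), and applies Proposition \ref{15}(2) to obtain $(p+p')\oplus(r+s)=(p\oplus r)+(p'\oplus s)\sim(q\oplus r)+(q'\oplus s)=(q+q')\oplus(r+s)$, exhibiting the single witness $r+s$. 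You instead deduce (2)(i) from parts (4) and (2)(ii) by chaining $p+p'\approx p\oplus p'\approx q\oplus q'\approx q+q'$ and invoking transitivity of $\approx$ from Proposition \ref{16}(2)(i). Both arguments are valid under (T); yours avoids Proposition \ref{15}(2) altogether and sidesteps the ``without loss of generality $r\perp s$'' normalization (which itself deserves a word of justification, replacing $r,s$ by $r\oplus 0$ and $0\oplus s$), while the paper's is independent of (2)(ii) and stays entirely at the $\sim$-level.
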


\begin{proof}
(1), (3), (4) and (5) are immediate by Proposition \ref{15}(1), (4), (5) and (6) respectively with Proposition \ref{16}(1).

Next, we prove (2). Assume that $V$ satisfies (T).
Let $p \approx q,~p' \approx q',~p \perp p'$ and $q \perp q'$. Then $p \oplus r \sim q \oplus r$ and $p' \oplus s \sim q' \oplus s$ for some $r,s \in \mathcal{OP}_\infty(V)$. Without loss of generality, we can assume that $r \perp s$ so that $p \oplus r \perp p' \oplus s$ and $q \oplus r \perp q' \oplus s$. By Proposition \ref{15}(2), we get that $(p+p') \oplus (r+s)=(p \oplus r) + (p' \oplus s) \sim (q \oplus r)+(q' \oplus s)=(q+q') \oplus (r+s)$ so that $p+p' \approx q+q'$.

Now let $p \approx p'$ and $q \approx q'$. Then there exist $r,s \in \mathcal{OP}_\infty(V)$ such that $p \oplus r \sim p' \oplus r$ and $q \oplus s \sim q' \oplus s$. Now result follows from Propositions \ref{15} and \ref{14}.
\end{proof}

\begin{proposition}\label{w}
Let $V$ be an absolute matrix order unit space satisfying (T). For each $p,q\in \mathcal{OP}_\infty(V),$ let $[p] = \lbrace r\in \mathcal{OP}_\infty(V): r\approx p \rbrace$ and put $[p]+ [q] = [p\oplus q].$ Then $+$ is a binary operation in the family of equivalence classes $(\mathcal{OP}_\infty(V)\big/\approx, +)$. Also
\begin{enumerate}
\item[(1)] $[p] + [0] = [p]$ for all $p\in \mathcal{OP}_\infty(V);$
\item[(2)] $[p]+ [q] = [q]+ [p]$ for all $p,~q\in \mathcal{OP}_\infty(V);$
\item[(3)] $[p]+ [r] = [q]+ [r]$ for $p,~q,~r\in \mathcal{OP}_\infty(V),$ then $[p] = [q].$ 
\end{enumerate}
Thus $(\mathcal{OP}_\infty(V)\big/\approx, +)$ is an abelian semigroup satisfying the cancellation law.
\end{proposition}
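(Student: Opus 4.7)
The plan is to show that each clause is essentially a corollary of the properties of $\sim$ and $\approx$ collected in Propositions \ref{15}, \ref{16}, and \ref{54}, and then to derive the cancellation law directly from the definition of $\approx$. Before verifying the listed identities, I would first check that the operation $[p] + [q] := [p \oplus q]$ is well-defined on equivalence classes; this is exactly the content of Proposition \ref{54}(2)(ii): if $p \approx p'$ and $q \approx q'$, then $p \oplus q \approx p' \oplus q'$, so the class $[p \oplus q]$ depends only on $[p]$ and $[q]$.

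Once well-definedness is in hand, the three enumerated properties are direct translations. For (1), the identity $[p] + [0] = [p]$ amounts to $p \oplus 0 \approx p$, which is Proposition \ref{54}(1). For (2), commutativity follows from $p \oplus q \approx q \oplus p$, provided by Proposition \ref{54}(3). The associativity needed to call the structure a \emph{semigroup} is not listed explicitly, but is immediate from Proposition \ref{54}(5), which gives the strict equality $(p \oplus q) \oplus r = p \oplus (q \oplus r)$ and hence $([p] + [q]) + [r] = [p] + ([q] + [r])$.

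For the cancellation law (3), suppose $[p] + [r] = [q] + [r]$, i.e.\ $p \oplus r \approx q \oplus r$. By the definition of $\approx$, there exists $s \in \mathcal{OP}_\infty(V)$ such that
\[
(p \oplus r) \oplus s \sim (q \oplus r) \oplus s.
\]
Using Proposition \ref{54}(5) to rebracket on both sides, this becomes $p \oplus (r \oplus s) \sim q \oplus (r \oplus s)$, and since $r \oplus s \in \mathcal{OP}_\infty(V)$, the definition of $\approx$ yields $p \approx q$, i.e.\ $[p] = [q]$.

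I do not expect a serious obstacle here: the work was absorbed into Propositions \ref{15}--\ref{54}, and the condition (T) that enables transitivity of $\approx$ and Proposition \ref{54}(2)(ii) has been assumed throughout. The only point that deserves a line of care is in the cancellation argument, where one must remember that the definition of $\approx$ only supplies $\sim$-equivalence after stabilisation by \emph{some} projection; fortunately the stabilising projection for $p \oplus r \approx q \oplus r$ can be absorbed, via associativity of $\oplus$, into a single stabiliser $r \oplus s$ for $p \approx q$.
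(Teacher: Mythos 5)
Your proposal is correct and follows essentially the same route as the paper: well-definedness via Proposition \ref{54}(2)(ii), clauses (1) and (2) from Proposition \ref{54}(1) and (3), and cancellation by unwinding the definition of $\approx$ and rebracketing $(p\oplus r)\oplus s$ as $p\oplus(r\oplus s)$ using Proposition \ref{54}(5). Your explicit remark that the stabiliser for $p\oplus r\approx q\oplus r$ gets absorbed into the single stabiliser $r\oplus s$ is exactly the point the paper's two-line argument relies on.
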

\begin{proof}
By Proposition \ref{54}(2)(ii), it follows that $+$ is well-defined in $\mathcal{OP}_\infty(V)\big/\approx.$ Note that (1) and (2) immediately follow from \ref{54}(1) and (3) respectively. Next, we prove (3).

Let $p,q,r \in \mathcal{OP}_\infty(V)$ such that $[p]+ [r] = [q]+ [r]$. Then $p\oplus r \approx q\oplus r$  so that $p\oplus (r \oplus s) \sim q\oplus (r \oplus s)$ for some $s\in \mathcal{OP}_\infty(V)$. Thus $p \approx q$ so that $[p] = [q].$

\end{proof}
Now we construct the Grotheindick group from the abelian semigroup developed in the previous result.

\begin{theorem}\label{w38}
Let $(V,e)$ be an absolute matrix order unit space satisfying (T) and consider $\mathcal{OP}_\infty(V) \times \mathcal{OP}_\infty(V)$. For all $p_1,p_2,q_1,q_2 \in \mathcal{OP}_\infty(V),$ we define $(p_1,q_1) \equiv (p_2,q_2)$ if and only if $p_1\oplus q_2 \approx p_2\oplus q_1.$ Then $\equiv$ is an equivalence relation on $\mathcal{OP}_\infty(V) \times \mathcal{OP}_\infty(V).$ Consider $$K_0(V)=\lbrace [(p_,q)]: p,q\in \mathcal{OP}_\infty\rbrace$$ where $[(p,q)]$ is the equivalence class of $(p,q)$ in $(\mathcal{OP}_\infty(V) \times \mathcal{OP}_\infty(V),\equiv).$ For all $p_1,p_2,q_1,q_2 \in \mathcal{OP}_\infty(V),$ we write $$[(p_1,q_1)]+ [(p_2,q_2)] = [(p_1\oplus p_2, q_1\oplus q_2)].$$ Then $(K_0(V),+)$ is an abelian group.
\end{theorem}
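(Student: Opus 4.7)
The plan is to follow the classical Grothendieck construction from an abelian semigroup with cancellation, taking the structural results in Propositions \ref{w} and \ref{54} as the building blocks. The whole argument reduces to checking that $\equiv$ is an equivalence relation, that $+$ descends to the quotient, and that the usual identity and inverse formulas work.

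First, I would verify that $\equiv$ is an equivalence relation on $\mathcal{OP}_\infty(V) \times \mathcal{OP}_\infty(V)$. Reflexivity and symmetry are immediate from the corresponding properties of $\approx$ together with commutativity of $\oplus$ modulo $\approx$ (Proposition \ref{54}(3)). For transitivity, assume $(p_1,q_1)\equiv(p_2,q_2)$ and $(p_2,q_2)\equiv(p_3,q_3)$, so that $p_1\oplus q_2\approx p_2\oplus q_1$ and $p_2\oplus q_3\approx p_3\oplus q_2$. Applying Proposition \ref{54}(2)(ii) to $\oplus$-add these relations, and then rearranging using associativity and commutativity of $\oplus$ modulo $\approx$, one obtains $(p_1\oplus q_3)\oplus(p_2\oplus q_2)\approx(p_3\oplus q_1)\oplus(p_2\oplus q_2)$. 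At this point the cancellation law from Proposition \ref{w}(3), applied to the class $[p_2\oplus q_2]$, yields $p_1\oplus q_3\approx p_3\oplus q_1$, i.e.\ $(p_1,q_1)\equiv(p_3,q_3)$.

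Second, I would check that the prescribed addition is well-defined on $K_0(V)$. If $(p_1,q_1)\equiv(p_1',q_1')$ and $(p_2,q_2)\equiv(p_2',q_2')$, combining the two defining $\approx$-relations via Proposition \ref{54}(2)(ii) and rearranging through Proposition \ref{54}(3) and (5) shows that $(p_1\oplus p_2)\oplus(q_1'\oplus q_2')\approx(p_1'\oplus p_2')\oplus(q_1\oplus q_2)$, so that $[(p_1\oplus p_2,q_1\oplus q_2)]=[(p_1'\oplus p_2',q_1'\oplus q_2')]$. Associativity and commutativity of $+$ on $K_0(V)$ then descend directly from the corresponding properties of $\oplus$ modulo $\approx$ in Proposition \ref{54}. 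The identity element is $[(0,0)]$, equivalently $[(p,p)]$ for any $p\in\mathcal{OP}_\infty(V)$, since Proposition \ref{54}(1) and (3) give $(p_1\oplus p,q_1\oplus p)\equiv(p_1,q_1)$. Finally, given $[(p,q)]\in K_0(V)$, its inverse is $[(q,p)]$: the sum equals $[(p\oplus q,q\oplus p)]$, and by Proposition \ref{54}(3) we have $p\oplus q\approx q\oplus p$, so $(p\oplus q,q\oplus p)\equiv(0,0)$.

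The main obstacle I anticipate is transitivity of $\equiv$; this is precisely where the cancellation law of Proposition \ref{w}(3) becomes indispensable, and it is the only step where one cannot simply quote $\oplus$-compatibility of $\approx$. Once transitivity is in hand, well-definedness of $+$ and the verification of the group axioms are essentially bookkeeping with $\oplus$, using only the commutativity, associativity, and additivity results already established in Proposition \ref{54}.
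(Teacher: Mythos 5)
Your proposal is correct and follows essentially the same route as the paper's proof: transitivity of $\equiv$ via Proposition \ref{54}(2)(ii) followed by the cancellation law of Proposition \ref{w}(3), well-definedness of $+$ by the same $\oplus$-compatibility results, and the standard identity and inverse verifications. No substantive differences to report.
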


\begin{proof}
It follows from Proposition \ref{16}(2)(i) that the relation $\equiv$ on $\mathcal{OP}_\infty(V) \times \mathcal{OP}_\infty(V)$ is reflexive and symmetric.  Let $[(p_1,q_1)] \equiv [(p_2,q_2)]$ and $[(p_2,q_2)] \equiv [(p_3,q_3)]$ for some $p_1,p_2,p_3,q_1,q_2,q_3 \in \mathcal{OP}_\infty(V)$. Then $p_1 \oplus q_2 \approx p_2 \oplus q_1$ and $p_2 \oplus q_3 \approx p_3 \oplus q_2$. By Proposition \ref{54}(2)(ii),(3) and (5), we get that $(p_1 \oplus q_3) \oplus (p_2 \oplus q_2) \approx (p_3 \oplus q_1) \oplus (p_2 \oplus q_2)$ so that $[p_1 \oplus q_3] + [p_2 \oplus q_2] = [p_3 \oplus q_1] + [p_2 \oplus q_2]$. By Proposition \ref{w}(3), we conclude that $[p_1 \oplus q_3] = [p_3 \oplus q_1]$ so that $p_1 \oplus q_3 \approx p_3 \oplus q_1$. Thus $[(p_1,q_1)] \equiv [(p_3,q_3)]$ so that $\equiv$ is transitive. Hence $\equiv$ is an equivalence relation on $\mathcal{OP}_\infty(V) \times \mathcal{OP}_\infty(V).$ 

Now, we show that $+$ is well-defined on $K_0(V).$ Let $(p_1,q_1) \equiv (p_1',q_1')$ and $(p_2,q_2) \equiv (p_2',q_2')$ in $\mathcal{OP}_\infty(V) \times \mathcal{OP}_\infty(V).$ Then $p_1\oplus q_1' \approx p_1' \oplus q_1$ and $p_2\oplus q_2' \approx p_2' \oplus q_2$. By Proposition \ref{54}(2)(ii),(3) and (5), we get that $(p_1\oplus p_2)\oplus (q_1^,\oplus q_2^,)\approx (p_1^,\oplus p_2^,)\oplus (q_1\oplus q_2).$ Thus $[(p_1, q_1)]+[(p_2, q_2)] = [(p_1', q_1')]+[(p_2', q_2')]$ so that $+$ is well-defined. 

By Proposition \ref{54}(2)(ii) and (3), we have 
$$(p_1\oplus p_2)\oplus (q_2\oplus q_1) \approx (p_2\oplus p_1)\oplus (q_1\oplus q_2)$$ for all $p_1,q_1,p_2,q_2\in \mathcal{OP}_\infty(V)$ so that $+$ is commutative in $K_0(V)$.

Let $p,q\in \mathcal{OP}_\infty(V).$ By Proposition \ref{54}(1) and \ref{54}(2)(ii), we have $(p\oplus 0)\oplus (q \oplus 0) \approx p\oplus q.$ Thus $[(0,0)]$ is an identity element in $K_0(V).$  

Associativity of $+$ on $K_0(V)$ follows from \ref{54}(5).

For any $p~\textit{and}~q\in \mathcal{OP}_\infty(V),$ we have that  $(p\oplus q, q\oplus p) \equiv (0,0)$ so that $[(p\oplus q, q\oplus p)] \equiv [(0,0)]$. Hence $[(q,p)]$ is inverse element of $[(p,q)]$ in $K_0(V).$ 
\end{proof}
\begin{remark}
	The abelian group $K_0(V)$ is called the $K_0$-group of the absolute matrix order unit space $V$.
\end{remark}

\subsection{Order structure in $K_0$}

Let $(G,+)$ be an abelian group and let $G^+$ be a non-empty subset of $G$ such that $0 \in G^+$ and $G^+ + G^+ \subset G^+$. Then $G^+$ is said to be a group cone in $G$. In this case, $(G,G^+)$ is said to be an ordered abelian group. In this case, given $g_1,g_2 \in G$, we define $g_1 \le g_2$ if and only if $g_2 - g_1 \in G^+$. Then $\le$ is reflexive, transitive and translation invariant. Further, we have $G^+ = \lbrace g \in G : g \ge 0 \rbrace$. Conversely, if $\le$ is a reflexive, transitive and translation invariant relation in an additive abelian group $G$, then $G^+ := \lbrace g \in G : g \ge 0 \rbrace$ is a group cone in $G$ so that $(G,G^+)$ is an ordered abelian group.

Let $(G,G^+)$ be an ordered abelian group. Then   
\begin{enumerate}
	\item[(1)] $G^+$ is said to be proper, if $G^+ \cap - G^+ = \lbrace 0 \rbrace$.
	\item[(2)] $G^+$ is said to be generating, if $G = G^+ - G^+$.
\end{enumerate}
For the further discussion, we shall assume that $(G,G^+)$ is proper and generating.

Let $(G,G^+)$ be an ordered group. Then $g \in G^+$ is said to be an order unit for $G$, if given any $h \in G$ there exists $n \in \mathbb{N}$ such that $-ng \le h \le ng$.

An ordered abelian group $(G,G^+)$ with an order unit $g$, is called an ordered abelian group with a distinguished order unit. For details please refer to \cite{KRG86, RLL00}.

In this subsection, we show that given an absolute matrix order unit space $V$, $K_0(V)$ is an ordered abelian group with a distinguished order unit under the assumption that all order projections in $V$ are finite. 
\begin{theorem}\label{23}
	Let $V$ be an absolute matrix order unit space satisfying (T). Put $K_0(V)^+ = \lbrace [(p,0)]: p\in \mathcal{OP}_\infty(V)\rbrace.$ Then 
	\begin{enumerate}
		\item[(a)]$K_0(V)^+$ is a group cone in $K_0(V)$. 
		\item[(b)]If $e^n$ is finite for all $n \in \mathbb{N}$, then $K_0(V)^+$ is proper. 
		\item[(c)] $K_0(V)^+$ is generating. 
	\end{enumerate} 
	In other words, if $e^n$ is finite for all $n \in \mathbb{N}$, then $(K_0(V),K_0(V)^+)$ is an ordered abelian group.
\end{theorem}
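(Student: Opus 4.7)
The approach splits along the three claims. Part (a) is immediate from Theorem \ref{w38}: taking $p = 0$ gives $[(0,0)] \in K_0(V)^+$, and since $p \oplus q \in \mathcal{OP}_\infty(V)$ whenever $p, q$ are, we have $[(p,0)] + [(q,0)] = [(p \oplus q, 0)] \in K_0(V)^+$. For (c), the key remark is that $[(0,q)] = -[(q,0)]$ in $K_0(V)$: indeed $[(q,0)] + [(0,q)] = [(q \oplus 0, 0 \oplus q)]$, and $q \oplus 0 \approx 0 \oplus q$ by Proposition \ref{54}(3), so this class equals $[(0,0)] = 0$. Hence every $[(p,q)] = [(p,0)] + [(0,q)] = [(p,0)] - [(q,0)]$ lies in $K_0(V)^+ - K_0(V)^+$.

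The real content lies in (b). Suppose $[(p,0)] \in K_0(V)^+ \cap (-K_0(V)^+)$, so $[(p,0)] = [(0,q)]$ for some $q \in \mathcal{OP}_\infty(V)$, which by definition of $\equiv$ means $p \oplus q \approx 0$. Unfolding $\approx$, there exists $r \in \mathcal{OP}_\infty(V)$ with $p \oplus q \oplus r \sim 0 \oplus r$, an equivalence taking place in a single $\mathcal{OP}_n(V)$ once we pad $p, q, r$ appropriately. Writing $s = p \oplus q \oplus r$ and $s' = 0 \oplus r$ in $\mathcal{OP}_n(V)$, we have $s \sim s'$ together with $s - s' = p \oplus q \oplus 0 \in M_n(V)^+$, that is, $s' \le s$. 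If we can show that $s$ is \emph{finite}, then $s = s'$ will force $p \oplus q = 0$, hence $p = 0$, and $[(p,0)] = 0$.

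The missing step, which I expect to be the main obstacle, is the following subsidiary lemma: \emph{under the standing hypothesis that every $e^n$ is finite, every order projection $f \in \mathcal{OP}_n(V)$ is finite.} To prove it, fix $f \in \mathcal{OP}_n(V)$ with $f \sim g$ and $g \le f$, and pass to the complementary order projection $e^n - f \in \mathcal{OP}_n(V)$. Since $f \perp (e^n - f)$ and $\perp = \perp_\infty^a$ in any absolute matrix order unit space, the orthogonality transfers to subprojections, yielding $g \perp (e^n - f)$. Proposition \ref{15}(2) applied to $f \sim g$ and $e^n - f \sim e^n - f$ then gives $e^n = f + (e^n - f) \sim g + (e^n - f)$, while $g \le f$ forces $g + (e^n - f) \le e^n$. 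Finiteness of $e^n$ now yields $g + (e^n - f) = e^n$, and cancelling $e^n - f$ in the vector space leaves $g = f$.

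The only genuinely delicate piece is the implication ``$g \le f$ together with $f \perp (e^n - f)$ implies $g \perp (e^n - f)$'', which should follow directly from the definition of $\perp_\infty^a$: any $g_1 \le g$ also satisfies $g_1 \le f$, so $g_1 \perp_\infty v_1$ for all $v_1 \le e^n - f$ is inherited verbatim from the corresponding property of $f$. With this finiteness-lifting lemma in hand, the second paragraph closes (b), and combining (a), (b), (c) produces the ordered abelian group structure claimed in the theorem.
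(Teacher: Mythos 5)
Your proof is correct and follows essentially the same route as the paper's: parts (a) and (c) are dismissed there as routine, and for (b) the paper performs exactly your reduction --- from $[(p,0)]=[(0,q)]$ to $p\oplus q\oplus r\sim 0\oplus r$ with $0\oplus r\le p\oplus q\oplus r$ --- and then concludes equality by invoking \cite[Corollary 5.3]{PI19}, which is precisely your finiteness-lifting lemma. The only difference is that you prove that lemma inline (and your argument is sound: $g\le f$ together with $f\perp(e^n-f)$ gives $g\perp(e^n-f)$ because $\perp=\perp_\infty^a$ is inherited by smaller positive elements, and then Proposition \ref{15}(2) plus finiteness of $e^n$ forces $g=f$), whereas the paper simply imports it from the earlier reference.
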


\begin{proof}
	It is routine to verify (a) and (c). Next, we prove (b). Assume that $e^n$ is finite for all $n \in \mathbb{N}$. Let $g\in K_0(V)^+ \cap -K_0(V)^+.$ There exist $p \in \mathcal{OP}_m(V)$ and $q\in \mathcal{OP}_n(V)$ such that $g=[(p,0)] = [(0,q)]$. Then $(p,0) \equiv (0,q)$ so that $p \oplus q \approx 0_m \oplus 0_n$. Thus $p\oplus q \oplus r \sim 0_m \oplus 0_n \oplus r$ for some $r\in \mathcal{OP}_l(V).$ Since $p\oplus q \oplus r \sim 0_m \oplus 0_n \oplus r$ and $p \oplus q \oplus r \ge 0_m \oplus 0_n \oplus r,$ by \cite[Corollary 5.3]{PI19}, we get that $p \oplus q \oplus r = 0_m \oplus 0_n \oplus r$. Then $p \oplus q \oplus 0_l = 0_{m+n+l}$ so that $p = 0_m$ and $q=0_n.$ Thus $g=0.$ 
\end{proof}
\begin{corollary}\label{w13}
	Let $(V,e)$ be an absolute matrix order unit space satisfying (T) and let $e^n$ be finite for all $n \in \mathbb{N}$. Then $(K_0(V),K_0(V)^+)$ is an ordered abelian group with distinguished order unit $[(e,0)].$ In other words, for each $g\in K_0(V),$ there exists $n\in \mathbb{N}$ such that $-n[(e,0)]\le g\le n[(e,0)].$    
\end{corollary}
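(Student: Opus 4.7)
The plan is to invoke Theorem \ref{23} to get the ordered abelian group structure, and then to verify the distinguished order unit property by bounding an arbitrary class $[(p,q)]$ in terms of $[(e,0)]$. The bulk of the work is the order-unit claim, which I would reduce to a statement about a single positive generator $[(p,0)]$.

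First I would observe that every element of $K_0(V)$ has the form $[(p,0)] - [(q,0)]$. Indeed, using that $[(q,0)] + [(0,q)] = [(q,q)] = [(0,0)]$, one sees that $-[(q,0)] = [(0,q)]$, so $[(p,q)] = [(p,0)] + [(0,q)] = [(p,0)] - [(q,0)]$. Thus it suffices to show that for every $p \in \mathcal{OP}_m(V)$ there exists $N \in \mathbb{N}$ with $[(p,0)] \le N [(e,0)]$; then given $g = [(p,0)] - [(q,0)]$ with $p \in \mathcal{OP}_m(V)$ and $q \in \mathcal{OP}_n(V)$, taking $N = \max\{m,n\}$ and using $-[(q,0)] \le 0$ and $-[(p,0)] \le 0$ one obtains $-N[(e,0)] \le g \le N[(e,0)]$.

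For the bound on $[(p,0)]$, the key step is to exhibit $e^m - p$ as an order projection orthogonal to $p$. Self-adjointness is clear, and the identity $|e^m - 2(e^m - p)|_m = |2p - e^m|_m = e^m$ shows that $e^m - p \in \mathcal{OP}_m(V)$. Moreover, the same identity $|p - (e^m - p)|_m = |2p - e^m|_m = e^m = p + (e^m - p)$ shows $p \perp (e^m - p)$. Applying Proposition \ref{15}(5) yields $e^m \sim p \oplus (e^m - p)$, hence
\[
[(e^m, 0)] = [(p, 0)] + [(e^m - p, 0)],
\]
and since $[(e^m - p, 0)] \in K_0(V)^+$ this gives $[(p,0)] \le [(e^m, 0)]$. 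Finally, an easy induction using $e^m = e \oplus e^{m-1}$ together with the definition of addition in $K_0(V)$ gives $[(e^m, 0)] = m [(e, 0)]$, completing the bound.

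I do not expect a significant obstacle here: the two ingredients, namely the cancellation of $[(q,0)]$ via its additive inverse $[(0,q)]$ and the orthogonal splitting $e^m = p + (e^m - p)$, are both direct consequences of material already in place. The only point that might require a careful check is that $e^m - p$ genuinely satisfies both the order-projection identity and the orthogonality relation $\perp$ with $p$, but both reduce to the single equation $|2p - e^m|_m = e^m$ defining $p$ as an order projection.
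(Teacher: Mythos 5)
Your proposal is correct and follows essentially the same route as the paper: reduce to bounding a single class $[(p,0)]$ via the orthogonal splitting $e^m = p + (e^m-p)$, use $p+(e^m-p)\sim p\oplus(e^m-p)$ to get $[(p,0)]\le m[(e,0)]$, and combine with the decomposition $g=[(p,0)]-[(q,0)]$. The only difference is cosmetic — you verify explicitly that $e^m-p$ is an order projection orthogonal to $p$, which the paper takes for granted (citing \cite[Proposition 4.5(5)]{PI19}).
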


\begin{proof}
	By Theorem \ref{23}, $(K_0(V),K_0(V)^+)$ is an ordered abelian group. We show that $[(e,0)]$ is an order unit. First, let $r\in \mathcal{OP}_m(V)$ for some $m \in \mathbb{N}.$ By \cite[Proposition 4.5(5)]{PI19}, we have 
	\begin{eqnarray*}
		[(r,0)] & \le & [(r,0)]+[(e^m-r,0)] \\
		&=& [(r\oplus (e^m-r),0)] \\
		&=& [(r+(e^m-r),0)]\\
		&=& [(e^m,0)]\\
		&=& m[(e,0)].
	\end{eqnarray*}
	
	Let $g\in  K_0(V).$ Then by Theorem \ref{23}(c), we have $g=[(p,0)]-[(q,0)]$ for some $p,q\in \mathcal{OP}_\infty(V).$ Without any loss of generality, we may assume that $p,q\in \mathcal{OP}_n(V)$ for some $n\in \mathbb{N}.$ Since $-[(q,0)] \le g \le [(p,0)],$ we get that $-n[(e,0)] \le g \le n[(e,0)].$ Hence $[(e,0)]$ is an order unit for $K_0(V).$
\end{proof}

\section{Functoriality of $K_0$}

Let $V$ be an absolute matrix order unit space satisfying (T). Then $p \longmapsto [(p,0)]$ defines a map $\chi_V:\mathcal{OP}_\infty(V) \to K_0(V)$. If $V$ and $W$ are complex vector spaces and if $\phi: V \to W$ be a linear map, we denote the corresponding $\mathcal{F}$-bimodule map from $M_\infty(V)$ to $M_\infty(W)$ again by $\phi$. In this sense, $\phi_{\mid_{M_n(V)}} = \phi_n$ for all $n \in \mathbb{N}$. In the next result, we describe the functorial nature of $K_0$. For this, first we need to recall the following definition:

\begin{definition}\cite[Definition 4.5]{K19}
Let $V$ and $W$ be absolute matrix order unit spaces and let $\phi: V \to W$ be a $\ast$-linear map. We say that $\phi$ is  \emph{completely $\vert\cdot\vert$-preserving}, if $\phi_n: M_n(V) \to M_n(W)$ is an $\vert\cdot\vert$-preserving map for each $n \in \mathbb{N}$.
\end{definition}

\begin{theorem}\label{51}
Let $V$ and $W$ be absolute matrix order unit spaces satisfying (T) and let $\phi: V \to W$ be a completely $\vert \cdot \vert$-preserving map such that $\phi(e_V) \in \mathcal{OP}(W)$. Then there exists a unique group homomorphism $K_0(\phi): K_0(V)\to K_0(W)$ such that the following diagram commutes: 
\end{theorem}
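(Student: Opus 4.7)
The plan is to define $K_0(\phi)([(p,q)]) := [(\phi(p), \phi(q))]$ and verify it induces a well-defined group homomorphism; the diagram I expect to verify is the standard functoriality square $\chi_W \circ \phi = K_0(\phi) \circ \chi_V$ on the generators coming from $\mathcal{OP}_\infty(V)$. The key technical step is showing $\phi$ sends $\mathcal{OP}_\infty(V)$ into $\mathcal{OP}_\infty(W)$. Given $p \in \mathcal{OP}_n(V)$, complete $|\cdot|$-preservation applied to $|2p - e_V^n|_n = e_V^n$ yields $|2\phi(p) - \phi(e_V)^n|_n = \phi(e_V)^n$, which is the statement that $\phi(p) \perp (\phi(e_V)^n - \phi(p))$. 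Since $\phi(e_V) \in \mathcal{OP}(W)$, the element $e_W - \phi(e_V)$ is also an order projection orthogonal to $\phi(e_V)$ (by \cite[Proposition 4.5]{PI19}), so $e_W^n - \phi(e_V)^n$ is orthogonal to both $\phi(p)$ and $\phi(e_V)^n - \phi(p)$. Additivity of orthogonality in absolutely ordered spaces (axiom (7) of Definition \ref{18}, transferred via Corollaries \ref{27} and \ref{l}) then gives $\phi(p) \perp (e_W^n - \phi(p))$, i.e., $|2\phi(p) - e_W^n|_n = e_W^n$, so $\phi(p) \in \mathcal{OP}_n(W)$.

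With this in hand, $\phi$ respects partial isometric equivalence: if $p \sim q$ via $v \in \mathcal{PI}_{m,n}(V)$ with $|v|_{m,n} = q$ and $|v^*|_{n,m} = p$, then $|\phi(v)|_{m,n} = \phi(q)$ and $|\phi(v)^*|_{n,m} = \phi(p)$ are order projections by the previous step, so $\phi(v) \in \mathcal{PI}_{m,n}(W)$ implements $\phi(p) \sim \phi(q)$. Extending to $\approx$: if $p \oplus s \sim q \oplus s$, then $\phi(p) \oplus \phi(s) \sim \phi(q) \oplus \phi(s)$, so $\phi(p) \approx \phi(q)$; here I use that the entrywise action of $\phi$ on matrices commutes with $\oplus$. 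Consequently $(p_1, q_1) \equiv (p_2, q_2)$ forces $(\phi(p_1), \phi(q_1)) \equiv (\phi(p_2), \phi(q_2))$, so $K_0(\phi)([(p,q)]) := [(\phi(p), \phi(q))]$ is well-defined, and compatibility of $\phi$ with $\oplus$ makes it a group homomorphism.

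Commutativity of the diagram is then direct: $K_0(\phi)(\chi_V(p)) = [(\phi(p), 0)] = \chi_W(\phi(p))$. For uniqueness, note $[(p,q)] = \chi_V(p) - \chi_V(q)$ in $K_0(V)$, so the image of $\chi_V$ generates $K_0(V)$ as a group; any group homomorphism agreeing with $\chi_W \circ \phi$ on these generators must coincide with $K_0(\phi)$ everywhere. The main obstacle is the first step: if $\phi$ were unital in the sense $\phi(e_V) = e_W$ the order-projection property would transfer directly from $|\cdot|$-preservation, but under the weaker assumption $\phi(e_V) \in \mathcal{OP}(W)$ one must split $e_W^n = \phi(e_V)^n + (e_W^n - \phi(e_V)^n)$ and exploit the orthogonality of the two summands to recover the identity $|2\phi(p) - e_W^n|_n = e_W^n$; everything downstream is routine bookkeeping.
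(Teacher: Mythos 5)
Your proposal is correct and follows the same architecture as the paper's proof: define $K_0(\phi)([(p,q)])=[(\phi(p),\phi(q))]$, check that $\phi$ respects $\sim$ (hence $\approx$ and $\equiv$), verify additivity, and obtain uniqueness from the fact that $\chi_V(\mathcal{OP}_\infty(V))$ generates $K_0(V)$. The one place you genuinely diverge is the inclusion $\phi(\mathcal{OP}_\infty(V))\subset\mathcal{OP}_\infty(W)$: the paper disposes of this by citing \cite[Theorem 3.7]{K19} and \cite[Proposition 3.3]{PI19}, whereas you reprove it directly, writing $e_W^n-\phi(p)=(\phi(e_V)^n-\phi(p))+(e_W^n-\phi(e_V)^n)$ and combining $\phi(p)\perp(\phi(e_V)^n-\phi(p))$ (from $\vert 2\phi(p)-\phi(e_V)^n\vert_n=\phi(e_V)^n$) with $\phi(p)\perp(e_W^n-\phi(e_V)^n)$ (inherited from $\phi(e_V)^n\perp e_W^n-\phi(e_V)^n$ and $0\le\phi(p)\le\phi(e_V)^n$ via the analogue of Definition \ref{18}(6)) through the additivity axiom; this is essentially the content of the cited results, so what it buys is self-containedness rather than a new idea, and the argument is sound (note the relevant axioms live in each $M_n(W)_{sa}$ as an absolutely ordered space by Definition \ref{152}(1), so no transfer through the bimodule picture is needed). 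One point to tighten: when passing from $p\sim q$ to $\phi(p)\sim\phi(q)$ you apply $\vert\cdot\vert$-preservation to the rectangular element $v\in M_{m,n}(V)$, but complete $\vert\cdot\vert$-preservation is only postulated for the square maps $\phi_n$; the paper first pads to the square matrix $w=\begin{bmatrix} v\\ 0\end{bmatrix}\in M_n(V)$ and uses Proposition \ref{151}(4) and (5) to identify $\vert w\vert_n=p$ and $\vert w^*\vert_n=q\oplus 0_r$ before applying $\phi_n$. This is the routine bookkeeping you allude to, but it should be made explicit.
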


$$\begin{tikzcd}
\mathcal{OP}_\infty(V)\arrow{r}{\phi}\arrow[swap]{d}{\chi_V}
& \mathcal{OP}_\infty(W)\arrow{d}{\chi_W} \\
K_0(V)\arrow[swap]{r}{K_0(\phi)}
& K_0(W)
\end{tikzcd}$$

\begin{proof}
Since $\phi(e) \in \mathcal{OP}(W)$, by \cite[Theorem 3.7]{K19} and \cite[Proposition 3.3]{PI19}, we have that $\phi(\mathcal{OP}_\infty(V)) \subset \mathcal{OP}_\infty(W)$. Let $p,q \in \mathcal{OP}_{\infty}(V)$ such that $p \sim q$. Without loss of generality, we may assume that $p \in \mathcal{OP}_n(V)$ and $q \in \mathcal{OP}_m(V)$ with $n \ge m.$ There exists $v \in M_{m,n}(V)$ such that $p = \vert v \vert_{m,n}$ and $q = \vert v^* \vert_{n,m}.$ Put $r = n - m$ and $w = \begin{bmatrix} v\\0\end{bmatrix}\in M_n(V).$ By Proposition \ref{151}(4) and (5), it follows that $p = \vert w \vert_n$ and $q \oplus 0_r = \vert w^* \vert_n$ so that $p \sim q\oplus 0_r$ in $\mathcal{OP}_n(V).$ As $\phi_n: M_n(V) \to M_n(W)$ is an $\vert \cdot \vert$-preserving map, we get that $\phi_n(p) = \vert \phi_n(w) \vert_n$ and $\phi_m(q)\oplus 0_r = \vert \phi_n(w)^* \vert_n.$ Thus $\phi_n(p) \sim \phi_m(q)\oplus 0_r$. By \cite[Proposition 4.4]{K19} and \cite[Proposition 4.5(1)]{K19}, we have that $\phi_n(p) \sim \phi_m(q)$ so that $\phi(p) \sim \phi(q)$.

Next, put $K_0(\phi)([(p,q)])=[(\phi(p),\phi(q))]$ for each $[(p,q)]\in K_0(V)$. We show that $K_0(\phi)$ is well-defined. Let $[(p_1,q_1)] = [(p_2,q_2)]$ for some $p_1,p_2,q_1,q_2 \in \mathcal{OP}_\infty(V).$ Then there exists $r\in \mathcal{OP}_\infty(V)$ such that $p_1\oplus q_2 \oplus r \sim p_2 \oplus q_1 \oplus r.$ Thus $\phi(p_1)\oplus \phi(q_2) \oplus \phi(r) \sim \phi(p_2) \oplus \phi(q_1) \oplus \phi(r)$ so that $[(\phi(p_1),\phi(q_1))] = [(\phi(p_2),\phi(q_2))].$ Hence $K_0(\phi)$ is well-defined. For all $[(p_1,q_1)],[(p_2,q_2)] \in K_0(V)$, we have that 

\begin{eqnarray*}
K_0(\phi)([(p_1,q_1)]+[(p_2,q_2)]) &=& K_0(\phi)([(p_1 \oplus p_2,q_1 \oplus q_2)]) \\
&=& [(\phi(p_1 \oplus p_2),\phi(q_1 \oplus q_2))] \\
&=& [(\phi(p_1) \oplus \phi(p_2),\phi(q_1) \oplus \phi(q_2))] \\
&=& [(\phi(p_1),\phi(q_1))] + [(\phi(p_2),\phi(q_2))] \\
&=& K_0(\phi)([(p_1,q_1)]) + K_0(\phi)([(p_2,q_2)])
\end{eqnarray*}

so that $K_0(\phi)$ is a group homomorphism. By construction $K_0$ satisfies the diagram.

\textbf{Uniqueness of $K_0(\phi)$:-} Let $\mathcal{H}: K_0(V) \to K_0(W)$ be a group homomorphism satisfying the same diagram. Then $K_0(\phi)(\chi_V(p))=\chi_W(\phi(p))=\mathcal{H}(\chi_V(p))$ for all $p \in \mathcal{OP}_\infty(V)$. Thus we get that

\begin{eqnarray*}
K_0(\phi)([(p,q)]) &=& K_0(\phi)([(p,0)]-[(q,0)]) \\
&=& K_0(\phi)(\chi_V(p)-\chi_V(q)) \\
&=& K_0(\phi)(\chi_V(p)) - K_0(\phi)(\chi_V(q)) \\
&=& \mathcal{H}(\chi_V(p)) - \mathcal{H}(\chi_V(q)) \\
&=& \mathcal{H}(\chi_V(p)-\chi_V(q)) \\
&=& \mathcal{H}([(p,q)])
\end{eqnarray*} 

for all $[(p,q)] \in K_0(V)$. Hence $K_0(\phi) = \mathcal{H}$.
\end{proof}

Let $V$ and $W$ be absolute matrix order unit spaces. We denote the zero map between $V$ and $W$ by $0_{W,V}$. Similarly, the identity map on $V$ is denoted by $I_V$. Further, if $V$ and $W$ satisfy (T), then we denote the zero group homomorphism between $K_0(V)$ and $K_0(W)$ by $0_{K_0(W),K_0(V)}$ and the identity map on $K_0(V)$ is denoted by $I_{K_0(V)}$.

\begin{corollary}\label{52}
Let $U,V$ and $W$ be absolute matrix order unit spaces satisfying (T). Then 
\begin{enumerate}
\item[(a)]$K_0(I_V)=I_{K_0(V)};$
\item[(b)]If $\phi: U \to V$ and $\psi: V \to W$ be unital completely $\vert \cdot \vert$-preserving maps, then $K_0(\psi \circ \phi) = K_0(\psi) \circ K_0(\phi);$
\item[(c)]$K_0(0_{W,V}) = 0_{K_0(W),K_0(V)}.$
\end{enumerate}
\end{corollary}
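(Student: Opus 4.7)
The plan is to verify each part by appealing directly to the explicit formula $K_0(\phi)([(p,q)]) = [(\phi(p),\phi(q))]$ established in Theorem \ref{51}, together with the uniqueness clause of that theorem. In each case the main task is to check that the stated map (or composition) satisfies the hypotheses of Theorem \ref{51}, so that a group homomorphism $K_0(\cdot)$ exists in the first place; once existence is ensured, the uniqueness of the map making the Theorem \ref{51} diagram commute will pin down the identification.

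For part (a), I would observe that $I_V$ is unital and completely $\vert\cdot\vert$-preserving, and that $I_V(e_V) = e_V \in \mathcal{OP}(V)$, so Theorem \ref{51} applies and yields $K_0(I_V)$. Since $I_{K_0(V)}$ is a group homomorphism from $K_0(V)$ to itself and trivially satisfies $I_{K_0(V)} \circ \chi_V = \chi_V = \chi_V \circ I_V$, the uniqueness clause forces $K_0(I_V) = I_{K_0(V)}$.

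For part (b), the composition $\psi \circ \phi : U \to W$ of unital completely $\vert\cdot\vert$-preserving maps is again unital and completely $\vert\cdot\vert$-preserving, and $(\psi\circ\phi)(e_U) = \psi(\phi(e_U)) = \psi(e_V) = e_W \in \mathcal{OP}(W)$, so Theorem \ref{51} produces $K_0(\psi \circ \phi)$. On the other hand, $K_0(\psi) \circ K_0(\phi)$ is a group homomorphism from $K_0(U)$ to $K_0(W)$, and for each $p \in \mathcal{OP}_\infty(U)$,
\[ (K_0(\psi) \circ K_0(\phi))(\chi_U(p)) = K_0(\psi)(\chi_V(\phi(p))) = \chi_W(\psi(\phi(p))) = \chi_W((\psi \circ \phi)(p)), \]
so $K_0(\psi) \circ K_0(\phi)$ also makes the diagram for $\psi \circ \phi$ commute. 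Uniqueness in Theorem \ref{51} then yields $K_0(\psi \circ \phi) = K_0(\psi) \circ K_0(\phi)$.

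For part (c), the zero map $0_{W,V}$ is trivially completely $\vert\cdot\vert$-preserving (both sides vanish) and sends $e_V$ to $0 \in \mathcal{OP}(W)$, so Theorem \ref{51} applies and the explicit formula gives $K_0(0_{W,V})([(p,q)]) = [(0,0)]$, which is the identity element of $K_0(W)$; thus $K_0(0_{W,V}) = 0_{K_0(W),K_0(V)}$. There is no real obstacle in this corollary; the only mildly subtle point is checking that the zero projection is allowed as the image of the unit in the hypothesis of Theorem \ref{51}, which follows because $0$ is self-adjoint and $\vert e^n - 2 \cdot 0\vert = \vert e^n\vert = e^n$, making $0 \in \mathcal{OP}(W)$.
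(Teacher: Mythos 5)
Your proposal is correct and takes essentially the same route as the paper: both rest on the explicit formula $K_0(\phi)([(p,q)])=[(\phi(p),\phi(q))]$ from Theorem \ref{51}, the paper verifying each identity by direct computation on elements while you route (a) and (b) through the uniqueness clause of the diagram --- a cosmetic difference. Your extra checks that the hypotheses of Theorem \ref{51} hold (in particular that $0\in\mathcal{OP}(W)$ for part (c), since $\vert e^n-0\vert=e^n$) are sound and, if anything, slightly more careful than the paper's write-up.
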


\begin{proof}
\begin{enumerate}
\item[(a)] Let $p,q \in \mathcal{OP}_\infty(V)$. Then
\begin{eqnarray*}
K_0(I_V)([(p,q)]) &=& [(I_V(p),I_V(q))] \\
&=& [(p,q)] 
\end{eqnarray*}

so that by Theorem \ref{51}, $K_0(I_V)=I_{K_0(V)}$.
\item[(b)] For any $[(p,q)] \in K_0(U)$, we get that
\begin{eqnarray*}
K_0(\psi \circ \phi)([(p,q)]) &=& [(\psi \circ \phi(p),\psi \circ \phi(q))] \\
&=& [(\psi(\phi(p)),\psi(\phi(q)))] \\
&=& K_0(\psi)[(\phi(p),\phi(q))] \\
&=& K_0(\psi)(K_0(\phi)([(p,q)])) \\
&=& K_0(\psi) \circ K_0(\phi)([(p,q)]). 
\end{eqnarray*}
Thus by Theorem \ref{51}, we conclude that $K_0(\psi \circ \phi) = K_0(\psi) \circ K_0(\phi)$.
\item[(c)] $K_0(0_{W,V})([(p,q)]) = [(0_{W,V}(p),0_{W,V}(q))]=[(0,0)]$ for all $[(p,q)] \in K_0(V)$. Thus again using \ref{51}, we get that $K_{0_{W,V}} = 0_{K_0(W),K_0(V)}$.
\end{enumerate}
\end{proof}

\begin{remark}
It follows from Corollary \ref{52} that $K_0$ is a functor from category of absolute matrix order unit spaces with morphisms as unital completely $\vert \cdot \vert$-preserving maps to category of abelian groups.
\end{remark}

\begin{corollary}\label{w49}
Let $V$ and $W$ be isomorphic absolute matrix order unit spaces (isomorphic in the sense that there exists a unital, bijective completely $\vert \cdot \vert$-preserving map between $V$ and $W$). Then $K_0(V)$ and $K_0(W)$ are group isomorphic. 
\end{corollary}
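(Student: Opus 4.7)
The strategy is to exhibit a two-sided inverse for $K_0(\phi)$ by applying the functor $K_0$ to $\phi^{-1}$, so the bulk of the work is to verify that $\phi^{-1}:W\to V$ satisfies the hypotheses of Theorem \ref{51}. Let $\phi:V\to W$ be the given unital, bijective completely $\vert\cdot\vert$-preserving map. Since $\phi$ is a bijective linear map, each amplification $\phi_n:M_n(V)\to M_n(W)$ is a linear bijection whose inverse is $(\phi^{-1})_n$. The map $\phi^{-1}$ is unital because $\phi(e_V)=e_W$ forces $\phi^{-1}(e_W)=e_V\in\mathcal{OP}(V)$, and $\phi^{-1}$ is $\ast$-linear since $\phi$ is.

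Next I would check that $\phi^{-1}$ is completely $\vert\cdot\vert$-preserving. Fix $n\in\mathbb{N}$ and $w\in M_n(W)$. Set $v=\phi_n^{-1}(w)\in M_n(V)$. Since $\phi_n$ is $\vert\cdot\vert$-preserving and $\vert v\vert_n\in M_n(V)^+$, we have
\[
\phi_n\bigl(\vert\phi_n^{-1}(w)\vert_n\bigr) \;=\; \phi_n(\vert v\vert_n) \;=\; \vert\phi_n(v)\vert_n \;=\; \vert w\vert_n,
\]
and applying $\phi_n^{-1}$ to both sides gives $\vert\phi_n^{-1}(w)\vert_n=\phi_n^{-1}(\vert w\vert_n)$, i.e.\ $(\phi^{-1})_n$ is $\vert\cdot\vert$-preserving for every $n$.

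With these hypotheses in hand, Theorem \ref{51} produces group homomorphisms $K_0(\phi):K_0(V)\to K_0(W)$ and $K_0(\phi^{-1}):K_0(W)\to K_0(V)$. By the functoriality recorded in Corollary \ref{52}(a)--(b),
\[
K_0(\phi^{-1})\circ K_0(\phi)=K_0(\phi^{-1}\circ\phi)=K_0(I_V)=I_{K_0(V)},
\]
and symmetrically $K_0(\phi)\circ K_0(\phi^{-1})=I_{K_0(W)}$. Hence $K_0(\phi)$ is a group isomorphism with inverse $K_0(\phi^{-1})$.

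The only non-routine point is the verification that $(\phi^{-1})_n$ inherits the $\vert\cdot\vert$-preserving property, which is what allows Theorem \ref{51} to apply to $\phi^{-1}$; once this is in place the rest is a direct invocation of the functorial identities. One should also note implicitly that both $V$ and $W$ are assumed to satisfy condition $(T)$, as this is needed for $K_0(V)$ and $K_0(W)$ to be defined at all via Theorem \ref{w38}.
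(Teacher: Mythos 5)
Your proposal is correct and follows essentially the same route as the paper: show $\phi^{-1}$ is also a unital completely $\vert\cdot\vert$-preserving map, then use Corollary \ref{52}(a)--(b) to see that $K_0(\phi^{-1})$ is a two-sided inverse of $K_0(\phi)$. The only difference is that you spell out the verification that $(\phi^{-1})_n$ is $\vert\cdot\vert$-preserving, which the paper asserts without proof.
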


\begin{proof}
Let $\phi:V \to W$ be unital completely $\vert \cdot \vert$-preserving map. Then $\phi^{-1}$ is also unital completely $\vert \cdot \vert$-preserving map. Since $\phi^{-1} \circ \phi = I_{V}$ and $\phi \circ \phi^{-1} = I_{W}$, by Corollary \ref{52}(a) and (b), we get that $K_0(\phi^{-1}) \circ K_0(\phi) = I_{K_0(V)}$ and $K_0(\phi) \circ K_0(\phi^{-1}) = I_{K_0(W)}$. Thus $K_0(\phi):K_0(V) \to K_0(W)$ is a surjective group isomorphism and $K_0(\phi)^{-1} = K_0(\phi^{-1})$. Hence $K_0(V)$ and $K_0(W)$ are group isomorphic. 
\end{proof}

\subsection{Orthogonality of completely positive maps}

Now, we define orthogonality of completely positive linear maps between absolutely matrix ordered spaces.  

\begin{definition}\label{w51}
	Let $V$ and $W$ be absolute order unit spaces and let $\phi,\psi:V\to W$ be positive linear maps. We say that $\phi$ is \emph{orthogonal} to $\psi$ (we write, $\phi \perp \psi$), if $\phi(u) \perp \psi(v)$ for all $u,v \in V^+.$
\end{definition}

\begin{remark}\label{w50}
	Let $V$ and $W$ be absolute order unit spaces and let $\phi,\psi:V\to W$ be $\vert \cdot \vert$-preserving maps. Then $\phi \perp \psi$ if and only if $\phi(u) \perp  \psi(v)$ for all $u,v \in V$. Thus we get that $\vert \phi(u) \pm \psi(v)\vert = \vert \phi(u)\vert + \vert \psi(v)\vert$ for all $u,v \in V.$ In fact, if $\phi$ and $\psi$ are orthogonal $\vert \cdot \vert$-preserving maps, then by \cite[Proposition 2.6]{K19} and by Definition \ref{w51}, we have that $\phi(u)^+, \phi(u)^-, \psi(v)^+$ and $\psi(v)^-$ are mutually orthogonal. Thus by \cite[Proposition 2.5]{PI19}, we conclude that $\phi(u) \perp \psi(v)$ and $\vert \phi(u) \pm \psi(v) \vert = \vert \phi(u) \vert + \vert \psi(v) \vert.$ 
\end{remark}

\begin{definition}
	Let $V$ and $W$ be absolute matrix order unit spaces and let $\phi,\psi:V\to W$ be completely positive maps. We say that $\phi$ is \emph{completely orthogonal} to $\psi$ (we continue to write, $\phi \perp \psi$), if $\phi_n \perp \psi_n$ for all $n\in \mathbb{N}$. 
\end{definition}

In next theorem, we show that sum of two orthogonal completely $\vert \cdot \vert$-preserving maps is again a completely $\vert \cdot \vert$-preserving.

\begin{theorem}\label{w41}
	Let $V$ and $W$ be absolute matrix order unit spaces and $\phi,\psi:V\to W$ be completely $\vert \cdot \vert$-preserving maps such that $\phi \perp \psi$. Then $\phi+\psi$ is also completely $\vert \cdot \vert$-preserving map. 
\end{theorem}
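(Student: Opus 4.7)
The plan is to verify the identity $|(\phi+\psi)_n(v)|_n = (\phi+\psi)_n(|v|_n)$ for every $v \in M_n(V)$ and every $n \in \mathbb{N}$ by reducing to Remark \ref{w50} applied at each matrix level.

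First I would unpack the hypotheses at the matricial level. Since $V$ and $W$ are absolute matrix order unit spaces, for each $n$ the triple $(M_n(V), M_n(V)^+, |\cdot|_n, e^n)$ is an absolute order unit space, and similarly for $W$. The assumption that $\phi$ and $\psi$ are completely $|\cdot|$-preserving means that $\phi_n, \psi_n: M_n(V) \to M_n(W)$ are $|\cdot|$-preserving maps between absolute order unit spaces, and the assumption $\phi \perp \psi$ (in the complete sense) translates to $\phi_n \perp \psi_n$ as orthogonal positive maps, i.e.\ $\phi_n(u) \perp \psi_n(w)$ for all $u, w \in M_n(V)^+$.

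Next I would apply Remark \ref{w50} to the pair $\phi_n, \psi_n$. That remark says that for a pair of orthogonal $|\cdot|$-preserving maps between absolute order unit spaces one has $|\phi_n(u) \pm \psi_n(w)|_n = |\phi_n(u)|_n + |\psi_n(w)|_n$ for all $u, w \in M_n(V)$. Specializing to $u = w = v$ yields
\[
|\phi_n(v) + \psi_n(v)|_n = |\phi_n(v)|_n + |\psi_n(v)|_n.
\]
Since $\phi_n$ and $\psi_n$ are each $|\cdot|$-preserving, the right-hand side equals $\phi_n(|v|_n) + \psi_n(|v|_n) = (\phi+\psi)_n(|v|_n)$. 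Thus $|(\phi+\psi)_n(v)|_n = (\phi+\psi)_n(|v|_n)$, which is exactly the statement that $(\phi+\psi)_n$ is $|\cdot|$-preserving. As this holds for every $n$, the map $\phi + \psi$ is completely $|\cdot|$-preserving.

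There is essentially no obstacle here beyond checking that Remark \ref{w50} legitimately applies at every matrix level. The only point worth being careful about is that complete orthogonality of $\phi$ and $\psi$ is precisely what guarantees $\phi_n \perp \psi_n$ for every $n$; without this one would only have the conclusion at the ground level $n = 1$ and nothing to say about $M_n(V)$. Once that is in place, the computation above is a one-line application of the orthogonal additivity of the absolute value.
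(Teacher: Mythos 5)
Your reduction to Remark \ref{w50} is the right idea, but there is a gap in the step where you ``specialize to $u=w=v$'' for an arbitrary $v\in M_n(V)$. The absolute order unit space at level $n$ is $(M_n(V)_{sa}, M_n(V)^+, \vert\cdot\vert_n, e^n)$, not all of $M_n(V)$: Remark \ref{w50} is a statement about $\vert\cdot\vert$-preserving maps between absolute order unit spaces, and its justification runs through the orthogonal decompositions $\phi(u)^{\pm}$, $\psi(v)^{\pm}$, which exist only for self-adjoint elements. So the identity $\vert \phi_n(v)+\psi_n(v)\vert_n=\vert\phi_n(v)\vert_n+\vert\psi_n(v)\vert_n$ is delivered by the remark only when $v\in M_n(V)_{sa}$. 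Since ``completely $\vert\cdot\vert$-preserving'' requires $\vert\phi_n(v)\vert_n=\phi_n(\vert v\vert_n)$ for \emph{every} $v\in M_n(V)$ (the absolute value $\vert\cdot\vert_n$ is defined on all of $M_n(V)$), your argument as written only proves the conclusion on the self-adjoint parts.

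The missing ingredient is the off-diagonal doubling trick, which is exactly how the paper closes this gap: for $v\in M_n(V)$ one passes to the self-adjoint element $\begin{bmatrix} 0 & v\\ v^* & 0\end{bmatrix}\in M_{2n}(V)_{sa}$, applies Remark \ref{w50} to the orthogonal pair $\phi_{2n},\psi_{2n}$ at level $2n$ (this is where complete orthogonality is genuinely needed, not just $\phi_n\perp\psi_n$), and then uses Proposition \ref{151}(2), namely $\left\vert\begin{bmatrix} 0 & w\\ w^* & 0\end{bmatrix}\right\vert_{2n}=\vert w^*\vert_n\oplus\vert w\vert_n$, to read off $\vert(\phi+\psi)_n(v)\vert_n$ from the diagonal of the resulting $2n\times 2n$ absolute value. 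With that reduction inserted, the rest of your computation (using that $\phi_n$ and $\psi_n$ each preserve $\vert\cdot\vert_n$) goes through verbatim.
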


\begin{proof}
	Let $v\in M_n(V)$. Then $\begin{bmatrix} 0 & v\\ v^* & 0\end{bmatrix} \in M_{2n}(V)_{sa}.$ Since $\phi$ is completely orthogonal to $\psi,$ by Remark \ref{w50}, we have $\phi_{2n}\left(\begin{bmatrix} 0 & v \\ v^* & 0 \end{bmatrix}\right) \perp \psi_{2n}\left(\begin{bmatrix} 0 & v \\ v^* & 0 \end{bmatrix}\right).$ Thus we get the following: 
	\begin{eqnarray*}
		\begin{bmatrix} \vert (\phi+\psi)_n(v^*)\vert_n & 0 \\ 0 & \vert (\phi+\psi)_n(v)\vert_n \end{bmatrix} &=& \left \vert \begin{bmatrix} 0 & (\phi+\psi)_n(v) \\ (\phi+\psi)_n(v^*) & 0 \end{bmatrix} \right \vert_{2n} \\
		&=& \left \vert \phi_{2n}\left(\begin{bmatrix} 0 & v \\ v^* & 0 \end{bmatrix}\right) + \psi_{2n}\left(\begin{bmatrix} 0 & v \\ v^* & 0 \end{bmatrix}\right) \right \vert_{2n} \\
		&=& \left \vert \phi_{2n}\left(\begin{bmatrix} 0 & v \\ v^* & 0 \end{bmatrix}\right)\right \vert_{2n} + \left \vert \psi_{2n}\left(\begin{bmatrix} 0 & v \\ v^* & 0 \end{bmatrix}\right) \right \vert_{2n}\\
		&=& \left \vert \begin{bmatrix} 0 & \phi_n(v)\\ {\phi_n(v^*)} & 0 \end{bmatrix}\right \vert_{2n}+\left \vert \begin{bmatrix} 0 & \psi_n(v)\\ {\psi_n(v^*)} & 0 \end{bmatrix}\right \vert_{2n}\\
		&=& \begin{bmatrix} \vert{\phi_n(v^*)}\vert_n & 0 \\ 0 & \vert \phi_n(v)\vert_n \end{bmatrix}+\begin{bmatrix} \vert {\psi_n(v^*)}\vert_n & 0 \\ 0 & \vert \psi_n(v)\vert_n \end{bmatrix}\\
		&=& \begin{bmatrix} \phi_n(\vert v^*\vert_n) & 0 \\ 0 & \phi_n(\vert v\vert_n) \end{bmatrix}+\begin{bmatrix} \psi_n(\vert v^*\vert_n) & 0 \\ 0 & \psi_n(\vert v\vert_n)\end{bmatrix} \\
		&=& \begin{bmatrix} (\phi+\psi)_n(\vert v^*\vert_n) & 0 \\ 0 & (\phi+\psi)_n(\vert v\vert_n)\end{bmatrix}
	\end{eqnarray*}
	
	so that $\vert (\phi+\psi)(v)\vert_n=(\phi+\psi)(\vert v\vert_n).$ Hence $\phi+\psi$ is completely $\vert \cdot \vert$-preserving.
\end{proof}

\begin{theorem}\label{w39}
	Let $V$ and $W$ be absolute matrix order unit spaces and $\phi,\psi:V\to W$ be completely $\vert \cdot \vert$-preserving maps such that $\phi \perp \psi$. If $\phi$ and $\psi$ map $\mathcal{OP}_\infty(V)$ into $\mathcal{OP}_\infty(W)$, then 
	\begin{enumerate}
		\item[(1)]$\phi+\psi$ also maps $\mathcal{OP}_\infty(V)$ into $\mathcal{OP}_\infty(W).$ 
		\item[(2)]$\phi,\psi$ and $\phi + \psi$ are completely contractive maps with $$\Vert \phi+\psi\Vert_{cb} = \textrm{max} \lbrace \Vert \phi\Vert_{cb}, \Vert \psi \Vert_{cb} \rbrace.$$   
		\item[(3)]If $V$ and $W$ satisfy (T), then $K_0(\phi+\psi)=K_0(\phi)+K_0(\psi).$ 
	\end{enumerate}
\end{theorem}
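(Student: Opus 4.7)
The plan is to treat the three parts in order, using complete orthogonality of $\phi$ and $\psi$ as the common engine: at every matrix level it transports orthogonality in $V^+$ to orthogonality in $W^+$, so $\phi+\psi$ inherits the relevant structural features. For (1), I would fix $p \in \mathcal{OP}_n(V)$, note that complete orthogonality applied to $p \in M_n(V)^+$ gives $\phi_n(p) \perp \psi_n(p)$ in $M_n(W)^+$, and invoke Proposition~\ref{15}(5) to conclude $(\phi+\psi)_n(p) = \phi_n(p) + \psi_n(p) \sim \phi_n(p) \oplus \psi_n(p)$; since $\sim$ is defined only among order projections, this forces $(\phi+\psi)_n(p) \in \mathcal{OP}_n(W)$. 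Specializing to $p = e_V$ yields $(\phi+\psi)(e_V) \in \mathcal{OP}(W)$, which together with Theorem~\ref{w41} supplies both hypotheses needed to apply Theorem~\ref{51} to $\phi+\psi$ in part~(3).

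For (2), I would first observe that $\phi(e_V) \in \mathcal{OP}(W)$ forces $0 \le \phi(e_V) \le e_W$ (by unfolding $\vert 2\phi(e_V) - e_W\vert_1 = e_W$). For $v \in M_n(V)$, combining $\vert v\vert_n \le \Vert v\Vert_n\, e_V^n$ with $\vert\cdot\vert$-preservation then yields $\vert \phi_n(v)\vert_n = \phi_n(\vert v\vert_n) \le \Vert v\Vert_n\, e_W^n$, and the AMO-unit-space identity $\Vert v\Vert_n = \Vert\vert v\vert_n\Vert_n$ (a consequence of Proposition~\ref{151}(2) together with $\perp = \perp_\infty^a$ applied to the self-adjoint off-diagonal embedding of $v$) promotes this to $\Vert \phi_n(v)\Vert_n \le \Vert v\Vert_n$. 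So $\phi$, and by the same argument $\psi$ and $\phi+\psi$, are completely contractive. For the $\max$ formula, $\vert\cdot\vert$-preservation gives $\vert (\phi+\psi)_n(v)\vert_n = \phi_n(\vert v\vert_n) + \psi_n(\vert v\vert_n)$ with orthogonal positive summands; $\perp = \perp_\infty^a$ in $W$ converts this sum's norm into a $\max$, and one last application of $\Vert\cdot\Vert_n = \Vert\vert\cdot\vert_n\Vert_n$ produces $\Vert (\phi+\psi)_n(v)\Vert_n = \max\{\Vert\phi_n(v)\Vert_n, \Vert\psi_n(v)\Vert_n\}$. Suprema over $\Vert v\Vert_n \le 1$ and then over $n$ give the announced identity.

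For (3), Theorem~\ref{51} ensures that both $K_0(\phi+\psi)$ and $K_0(\phi)+K_0(\psi)$ are group homomorphisms $K_0(V) \to K_0(W)$, so it suffices to verify they agree on each generator $[(p,q)]$. Unwinding definitions, this reduces to showing
\[
[(\phi(p)+\psi(p),\ \phi(q)+\psi(q))] = [(\phi(p)\oplus\psi(p),\ \phi(q)\oplus\psi(q))]
\]
in $K_0(W)$. Using $\phi(p) \perp \psi(p)$ and $\phi(q) \perp \psi(q)$, Proposition~\ref{15}(5) replaces each plus sign by a direct sum modulo $\sim$, and Proposition~\ref{15}(3) then shows that both $(\phi(p)+\psi(p)) \oplus (\phi(q)\oplus\psi(q))$ and $(\phi(p)\oplus\psi(p)) \oplus (\phi(q)+\psi(q))$ are $\sim$-equivalent to the common element $(\phi(p)\oplus\psi(p))\oplus(\phi(q)\oplus\psi(q))$, delivering the required $\approx$-relation (hence $\equiv$-relation).

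The delicate point is in part~(2): the bridge $\Vert v\Vert_n = \Vert\vert v\vert_n\Vert_n$ between the order-unit norm and the $\vert\cdot\vert$-structure of an AMO unit space is indispensable for both the contractivity estimate and the $\max$ computation, and although standard, it is not explicitly recorded in the excerpt; I would extract and cite it cleanly at the start of (2) before running the estimates. Beyond that, the argument is routine bookkeeping with Propositions~\ref{15} and~\ref{151} and the definitions of $\sim$, $\approx$, and $\equiv$.
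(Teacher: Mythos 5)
Your overall strategy matches the paper's: part (1) rests on the fact that a sum of two orthogonal order projections is again an order projection (the paper cites this directly from \cite[Proposition 3.2(1)]{PI19}, whereas you extract it implicitly from Proposition \ref{15}(5) --- acceptable, though strictly that proposition presupposes the fact rather than proves it); part (3) is the same computation, namely $\phi_n(p)+\psi_n(p)\sim\phi_n(p)\oplus\psi_n(p)$ on generators followed by the observation that group homomorphisms agreeing on $K_0(V)^+$ agree everywhere. The genuine divergence is in part (2). The paper establishes complete contractivity by passing to the self-adjoint dilation $\begin{bmatrix} 0 & v\\ v^* & 0\end{bmatrix}\in M_{2n}(V)_{sa}$ and citing \cite[Theorems 3.7(2) and 3.8(1)]{K19}, and it computes the $\max$ formula entirely at level $2n$, where the identity $\Vert w\Vert=\Vert\,\vert w\vert\,\Vert$ is only ever invoked for self-adjoint $w$. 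You instead argue at level $n$ via the order-unit bound $\vert v\vert_n\le\Vert v\Vert_n e_V^n$ pushed through $\phi_n$, using $0\le\phi(e_V)\le e_W$; this is a clean, more self-contained derivation of contractivity. The one point to repair: your ``bridge'' $\Vert v\Vert_n=\Vert\,\vert v\vert_n\Vert_n$ is the self-adjoint identity; for general $v$ what the axioms give (via Proposition \ref{151}(2) and $\Vert v\Vert_n=\bigl\Vert\begin{bmatrix}0&v\\v^*&0\end{bmatrix}\bigr\Vert_{2n}$) is $\Vert v\Vert_n=\max\lbrace\Vert\,\vert v\vert_n\Vert_n,\Vert\,\vert v^*\vert_n\Vert_n\rbrace$. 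Your contractivity estimate survives (it only needs $\Vert\,\vert v\vert_n\Vert_n\le\Vert v\Vert_n$ together with the same bound for $v^*$), and the $\max$ computation also goes through once you run it for both $\vert(\phi+\psi)_n(v)\vert_n$ and $\vert(\phi+\psi)_n(v)^*\vert_n$ and take the outer maximum; state the bridge in this two-sided form and the argument closes. The paper's $2n$-level bookkeeping avoids this issue automatically, which is what that detour buys.
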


\begin{proof}
	Assume that $\phi$ and $\psi$ map $\mathcal{OP}_\infty(V)$ into $\mathcal{OP}_\infty(W).$
	\begin{enumerate}
		\item[(1)] Let $p\in \mathcal{OP}_n(V)$. Then $\phi_n(p),\psi_n(p)\in \mathcal{OP}_n(W)$ with $\phi_n(p)\perp \psi_n(p).$ By \cite[Proposition 3.2(1)]{PI19}, we get that $\phi_n(p)+\psi_n(p)\in \mathcal{OP}_n(V)$. Thus $\phi+\psi$ maps $\mathcal{OP}_\infty(V)$ into $\mathcal{OP}_\infty(W).$ 
		\item[(2)] Let $v \in M_n(V), n \in \mathbb{N}$. By \cite[Theorem 3.7(2)]{K19} and \cite[Theorem 3.8(1)]{K19}, we get that 
		\begin{eqnarray*}
			\Vert \phi_n(v) \Vert_n &=& \left \Vert \begin{bmatrix} 0 & \phi_n(v) \\ \phi_n(v)^* & 0\end{bmatrix}\right \Vert_{2n} \\
			&=& \left \Vert \phi_{2n}\left(\begin{bmatrix} 0 & v \\ v^* & 0\end{bmatrix}\right)\right \Vert_{2n} \\
			&\le & \left \Vert \begin{bmatrix} 0 & v \\ v^* & 0\end{bmatrix}\right\Vert_{2n} \\
			&=& \Vert v \Vert_n.
		\end{eqnarray*}
		Thus $\phi$ is completely contractive. Similarly, we can show that $\psi$ is also completely contractive. 
		
		Further, by Remark \ref{w50}, we have $$ \begin{bmatrix} 0 & \phi_{n}(v) \\ \phi_n(v)^* & 0 \end{bmatrix} = \phi_{2n} \left (\begin{bmatrix} 0 & v \\ v^* & 0 \end{bmatrix} \right) \perp \psi_{2n} \left (\begin{bmatrix} 0 & v \\ v^* & 0 \end{bmatrix} \right ) = \begin{bmatrix} 0 & \psi_{n}(v) \\ \psi_n(v)^* & 0 \end{bmatrix}$$ so that 
		
		\begin{eqnarray*}
			\Vert (\phi_n+\psi_n)(v)\Vert_n &=& \left\Vert \begin{bmatrix} 0 & (\phi_n+\psi_n)(v) \\ (\phi_n+\psi_n)(v)^* & 0\end{bmatrix}\right\Vert_{2n} \\ 
			&=& \left\Vert \left \vert \begin{bmatrix} 0 & (\phi_n+\psi_n)(v) \\ (\phi_n+\psi_n)(v)^* & 0\end{bmatrix}\right \vert_{2n} \right\Vert_{2n} \\
			&=& \left \Vert \left \vert \begin{bmatrix} 0 & \phi_n(v) \\ {\phi_n(v)}^* & 0\end{bmatrix} + \begin{bmatrix} 0 & \psi_n(v) \\ {\psi_n(v)}^* & 0\end{bmatrix}\right\vert_{2n}\right \Vert_{2n} \\
			&=& \left\Vert \left \vert \begin{bmatrix} 0 & \phi_n(v) \\ \phi_n(v)^* & 0\end{bmatrix}\right \vert_{2n} + \left \vert \begin{bmatrix} 0 & \psi_n(v) \\ \psi_n(v)^* & 0\end{bmatrix}\right \vert_{2n}\right\Vert_{2n} \\
			&=& \textrm{max}\left\lbrace\left\Vert \left \vert \begin{bmatrix} 0 & \phi_n(v) \\ \phi_n(v)^* & 0\end{bmatrix}\right \vert_{2n}\right\Vert_{2n}, \left\Vert\left \vert \begin{bmatrix} 0 & \psi_n(v) \\ \psi_n(v)^* & 0\end{bmatrix}\right \vert_{2n}\right\Vert_{2n} \right\rbrace \\
			&=& \textrm{max}\left\lbrace\left\Vert \begin{bmatrix} 0 & \phi_n(v) \\ \phi_n(v)^* & 0\end{bmatrix}\right\Vert_{2n}, \left\Vert\begin{bmatrix} 0 & \psi_n(v) \\ \psi_n(v)^* & 0\end{bmatrix}\right\Vert_{2n} \right\rbrace\\
			&=& \textrm{max} \left\lbrace \Vert\phi_n(v)\Vert_n , \Vert \psi_n(v) \Vert_n\right\rbrace.
		\end{eqnarray*}
		Thus $\Vert \phi+\psi\Vert_{cb} = \textrm{max} \lbrace \Vert \phi\Vert_{cb}, \Vert \psi \Vert_{cb} \rbrace$ and consequently $\phi + \psi$ is also completely contractive.
		
		\item[(3)]
		Assume that $V$ and $W$ satisfy (T). Let $p \in \mathcal{OP}_n(V)$ for some $n \in \mathbb{N}$. Since $\phi_n(p) \perp \psi_n(p)$, by \cite[Proposition 4.5(5)]{PI19}, we get that $\phi_n(p) + \psi_n(p)\sim \phi_n(p) \oplus \psi_n(p)$. Then 
		\begin{eqnarray*}
			K_0(\phi+\psi)([(p,0)]) &=& [((\phi+\psi)_n(p),0)]\\
			&=& [(\phi_n(p)+\psi_n(p),0)]\\
			&=& [(\phi_n(p) \oplus \psi_n(p),0)]\\
			&=& [(\phi_n(p),0)]+[(\psi_n(p),0)]\\
			&=& K_0(\phi)([(p,0)])+K_0(\psi)([(p,0)]).
		\end{eqnarray*}
		Thus by Theorem \ref{23}(c), we conclude that $K_0(\phi+\psi)=K_0(\phi)+K_0(\psi).$
	\end{enumerate}
\end{proof}

\begin{remark}
	Let $V$ and $W$ be absolute matrix order unit spaces and let $\phi_i:V \to W$ be completely $\vert \cdot \vert$-preserving maps for $i=1,2,\cdots,n$ such that $\phi_i \perp \phi_j$ for all $i \neq j.$ Then $\displaystyle\sum_{i=1}^n \phi_i$ is also a  completely $\vert \cdot \vert$-preserving map. Moreover, if $V$ and $W$ satisfy (T) and if $\phi$ maps $\mathcal{OP}_\infty(V)$ into $\mathcal{OP}_\infty(W)$ for each $i$, then $K_0\left(\displaystyle \sum_{i=1}^{n}\phi_i\right) = \displaystyle \sum_{i=1}^{n} K_0(\phi_i).$
\end{remark}

\end{document}